\pgfplotsset{compat=newest}
\newtheorem{definition}{Definition}
\theoremstyle{thmstyletwo}%
\newtheorem{example}{Example}[section]
\newtheorem{remark}{Remark}[section]%
\newtheorem{corollary}{Corollary}[section]
\newtheorem{proposition}{Proposition}[section]
\newtheorem{lemma}{Lemma}[section]
\newtheorem{theorem}{Theorem}[section]
\theoremstyle{thmstylethree}%
\DeclareRobustCommand{\R}{\mathbb R}
\newcommand{\setE}{\mathbb{E}}
\newcommand{\setN}{\mathbb{N}}
\newcommand{\setX}{\mathbb{X}}
\newcommand{\sB}{\sf B}
\newcommand{\bsetX}{\overline{\mathbb{X}}}
\newcommand{\be}{\begin{equation}}
\newcommand{\ee}{\end{equation}}
\newcommand{\prox}{{\mbox{\rm Prox}}}
\newcommand{\dist}{\operatorname{dist}}
\newcommand{\sign}{\operatorname{sign}}
\newcommand{\proj}{\operatorname{Proj}}
\newcommand\domain[1]{\mathrm{dom\\}(#1)}
\newcommand{\N}{\mathbb{N}}
\begin{document}


\title{ A Unified Analysis on the Subgradient Upper Bounds for the Subgradient Methods Minimizing Composite Nonconvex, Nonsmooth and Non-Lipschitz Functions }


\author{
Daoli Zhu\thanks {Antai College of Economics and Management, Shanghai Jiao Tong University, 200030 Shanghai, China
({\tt dlzhu@sjtu.edu.cn}); and School of Data Science, The Chinese University of Hong Kong, Shenzhen, Shenzhen 518172, China}
\and
Lei Zhao\thanks {Institute of Translational Medicine and National Center for Translational Medicine, Shanghai
Jiao Tong University, 200030 Shanghai, China ({\tt l.zhao@sjtu.edu.cn}); and Xiangfu Laboratory, Jiashan 314000, China.}
\and
Shuzhong Zhang\thanks {Department of Industrial and Systems Engineering, University of Minnesota, Minneapolis, MN 55455, USA
({\tt zhangs@umn.edu})}}

\maketitle

\begin{abstract}%
This paper presents a unified analysis for the proximal subgradient method (Prox-SubGrad) type approach to minimize an overall objective of $f(x)+r(x)$, subject to convex constraints, where both $f$ and $r$ are weakly convex, nonsmooth, and non-Lipschitz.
Leveraging on the properties of the Moreau envelope of weakly convex functions, we are able to relate error-bound conditions, the growth conditions of the subgradients of the objective, and the behavior of the proximal subgradient iterates on some remarkably broad classes of objective functions. Various existing as well as new bounding conditions are studied, leading to novel iteration complexity results. The terrain of our exploration expands to stochastic proximal subgradient algorithms.
\end{abstract}

\vspace{0.5cm}

{\bf Keywords:} nonsmooth optimization, non-Lipschitz condition, proximal subgradient, stochastic subgradient, iteration complexity analysis.

{\bf MSC Classification:} 90C33, 65K15, 90C46, 49K35, 90C26.


\section{Introduction}\label{sec:introduction}
In this paper, we consider the following composite nonconvex and nonsmooth optimization problem:
\begin{equation}\label{eq:opt problem_prox}
\min_{x\in \setX} \ f(x)+r(x)
\end{equation}
where $f:\setX\rightarrow\R$ and $r:\setX\rightarrow\R$ are assumed to be proper and lower-semicontinuous, and $\setX$ is a closed convex subset of $\R^d$. Later in our discussion, we denote the overall objective as $\varphi(x):=f(x)+r(x)$.
Throughout this paper, we also use $\setX^*$ to denote the set of optimal solutions of~\eqref{eq:opt problem_prox}, which we assume to be non-empty. 
For any $x^*\in\setX^*$, we denote $\varphi^*=\varphi(x^*)$.
The set of critical points of problem \eqref{eq:opt problem_prox}, on the other hand, is denoted by $\overline \setX$. Let $\mathcal{I}_{\setX}(\cdot)$ be the indicator function of the set $\setX$.
Suppose that the functions $f$ and $r$ are weakly convex, which we shall define shortly. 
As the weakly convex functions are equipped with subgradients -- similar as the convex functions,
we propose to solve
problem~\eqref{eq:opt problem_prox} via the following proximal subgradient method (\Cref{alg:Prox-SubGrad}).

\begin{algorithm}[ht]
	\caption{(Prox-SubGrad). Proximal Subgradient Method for Solving \eqref{eq:opt problem_prox}}
	{\bf Initialization:}  $x^0$ and $\alpha_0$;
	\begin{algorithmic}[1]
		\item[1:] {\bf for} {$k=0,1,\ldots$} {\bf do}
		\item[2:] $\quad$Compute a subgradient $g(x^k)\in\partial f(x^k)$;
		\item[3:] $\quad$Update the step size $\alpha_{k}$ according to a certain rule;
		\item[4:] $\quad$Update  $x^{k+1} := \arg\min\limits_{x\in\setX}\,\, \langle g(x^k),x\rangle+r(x)+\frac{1}{2\alpha_k}\|x-x^k\|^2$. 
		\item[5:] {\bf end for}
	\end{algorithmic}
	\label{alg:Prox-SubGrad}
\end{algorithm}

In the above description of~\Cref{alg:Prox-SubGrad}, $\partial f(x)$ denotes the set of the subgradients of $f$ at $x$.

In the literature for analyzing the convergence of Prox-SubGrad, or the projected subgradient algorithm (Proj-SubGrad) if $r=0$, or simply the subgradient algorithm if $r=0$ and $\setX=\R^d$, a standard assumption is often made, which is that the objective function $\varphi=f+r$ is Lipschitz continuous. Equivalently put, it implies that $\|g(x)+h(x)\|\leq L$,   $\forall x\in\setX$ with $g(x)\in\partial f(x)$ and $h(x)\in\partial r(x)$, where $L$ is a positive constant. There is a vast body of literature on the topic, since the study on the subgradient algorithms has gained enormous momentum due to its applications in the machine learning research. For more readings on the topic, we refer the interested reader to, e.g., \cite{shor2012minimization,nesterov2014subgradient,dima2018subgradient}. The Lipschitz continuity assumption is often satisfied in practice. However, there are key applications where that assumption is violated, especially in the settings when the objective is nonsmooth, non-convex, and/or the level set is unbounded. Let us illustrate this by considering the following two application scenarios.

\subsection{Motivating examples}\label{subsec:motivation}
{\tt Application 1 (real-valued robust phase retrieval problem).}  Phase retrieval is a common computational problem with applications in diverse areas such as physics, imaging science, X-ray crystallography, and signal processing~\cite{fienup1982phase}. The (real-valued) robust phase retrieval problem amounts to solving \cite{RobustPhase1}
\be\label{eq:pr}
\min_{x\in\R^d}\ \varphi(x) := \frac{1}{n}\|(Ax)^{\circ2}-b^{\circ2}\|_1+R(x),
\ee
where $A=\left(a_1,...,a_n\right)^{\top}\in\R^{n\times d}$ is the data matrix, $b=\left(b_1,...,b_n\right)^{\top}\in\R^n$ is a vector, and $\circ2$ is the Hadamard power. Note that compared to the optimization problem discussed in \cite{RobustPhase1},~\eqref{eq:pr} has an extra regularizer $R$ (e.g.\ $R(x)=\frac{p}{2}\|x\|^2$), which is known to reduce the variance of an estimator~\cite{ma2019optimizationbased}. Let $\varphi=f+r$ with $f(x)=\frac{1}{n}\|(Ax)^{\circ2}-b^{\circ2}\|_1$ and $r(x)=R(x)=\frac{p}{2}\|x\|^2$, where the subgradient is of the form $\frac{1}{n}A^{\top}\left(Ax\circ\sign((Ax)^{\circ2}-b^{\circ2})\right)+px$ with $\circ$ being the Hadamard product. It is easy to see that this subgradient can grow to infinity, but at a rate no more than linear. Therefore, both $f$ and $r$ in this case are non-Lipschitz, and $f$ is nonconvex.

{\tt Application 2 (blind deconvolution and biconvex compressive sensing).} The problem of blind deconvolution seeks to recover a pair of vectors in two low-dimensional structured spaces from their pairwise convolution.  This problem occurs in a number of fields, such as astronomy and computer vision~\cite{chan1998total,levin2011understanding}.  For simplicity we focus on the real-valued case. One 
formulation of the problem reads
\be\label{eq:bdbc}
\min_{x,y}\frac{1}{n}\sum_{i=1}^n|\langle a_i,x\rangle\langle b_i, 
y \rangle-c_i|,
\ee
where $a_i$ and $b_i$ are given vectors, and $c_i$ are the convolution measurements.  More broadly,  problems of this form fall within the area of biconvex compressive sensing~\cite{ling2015self}.  Similarly to the previous example, the use of the $\ell_1$-penalty on the residuals yields strong recovery and stability guarantees under statistical assumptions. Since the $i$-th component of the $x$ part of the subgradient is of the form $\frac{1}{n}\sum_{i=1}^n \mbox{sign}(\langle a_i,x\rangle \langle b_i, y \rangle - c_i) \langle b_i, y \rangle a_i$, and the $i$th component of the $y$ part of the subgradient is of the form $\frac{1}{n}\sum_{i=1}^n \mbox{sign}(\langle a_i,x\rangle
\langle b_i, y \rangle - c_i) \langle a_i,x\rangle b_i$. It is easy to see that subgradient can grow, but at most linearly. Therefore, the problem is non-convex and non-Lipschitz.

Motivated by such examples, our goal in this paper is to develop an encompassing convergence analysis for Prox-SubGrad without resorting to Lipschitz continuity assumptions.
\subsection{Related literature}
To avoid
issues caused by being non-Lipschitz, early in the development of subgradient methods Shor~\cite{shor2012minimization}, Cohen and Zhu~\cite{CohenZ}, Nesterov~\cite{nesterov03} studied normalized subgradient methods (meaning that \Cref{alg:Prox-SubGrad} with normalized stepsize, i.e., $\alpha_k=\beta_k/\|g(x^k)\|$) for problem~\eqref{eq:opt problem_prox} with $r=0$, which guarantee 
convergence for the convex case. This scenario can be seen as a specialization of scenario (B$_1$) in~\Cref{sec:subgrad_upp}. Moreover, Cohen and Zhu~\cite{CohenZ} proposed some Prox-SubGrad methods assuming the subgradient $\|g(x)\|$ ($r=0$) or $\|g(x)+h(x)\|$ ($r\neq0$) satisfies the linearly bounded condition (see (B$_2$) of~\Cref{sec:subgrad_upp}) to guarantee convergence for non-Lipschitz convex problems. For this case, the step size design is not divided by $\|g(x^k)\|$. The linearly bounded stochastic subgradient condition on every realization of subgradient (i.e., $\forall x$, $\forall\xi$, $\forall g(x,\xi)\in\partial f(x,\xi)$, $\forall h(x)\in\partial r(x)$, $\|g(x,\xi)+h(x)\|^2\leq c_1\|x\|^2+c_2$ with $c_1,c_2\geq0$) is used in Culioli and Cohen~\cite{culioli1990} to solve convex stochastic optimization.

More recently, Li {\it et al.}~\cite{Li2023subgrad} extended the complexity results of normalized subgradient methods (\Cref{alg:Prox-SubGrad} with normalized stepsize, i.e., $\alpha_k=\beta_k/\|g(x^k)\|$ and $r=0$) to convex and weakly convex minimization without assuming Lipschitz continuity. Their complexity results were further extended to the truncated subgradients, the stochastic subgradients (as well as its clipped version), and the proximal subgradient methods for non-Lipschitz continuous functions, which can also be seen as a specialization of scenario (B$_1$) in~\Cref{sec:subgrad_upp}, since by setting $r=0$, condition (B$_1$) is satisfied in their setting and so our result generalizes the result in \cite{Li2023subgrad}.

Along a different line, Renegar~\cite{renegar2016} developed a framework to convert an originally non-Lipschitz continuous problem ($r=0$) into an equivalent Lipschitz continuous problem in a slightly lifted space. Grimmer~\cite{grimmer2018} extended the work of Renegar, leading to the so-called radial subgradient methods.
Grimmer~\cite{grimmer2019} studied the normalized subgradient scheme ($\alpha_k=\beta_k/\|g(x^k)\|$ and $r=0$) under a function upper bound condition for deterministic nonsmooth convex optimization, and obtained an extended convergence rate for this case. 
Moreover, the author proposed another expected quadratic growth-type subgradient upper bound (see (B$_{\mbox{\tiny {G}}}$) in~\Cref{sec:subgrad_upp}) and showed an $O(1/\sqrt{T})$ convergence rate for the stochastic nonsmooth convex problems without Lipschitz continuity. For the weakly convex case with Lipschitz continuous $f$ ((B$_{\mbox{\tiny {D-D}}}$) in~\Cref{sec:subgrad_upp}), Davis and Drusvyatskiy~\cite{davis2019stochastic} proposed a stochastic model-based minimization approach and showed that a stationarity measure would converge to zero.

Lu \cite{lu2019} introduced a notion of relative continuity to impose some relaxed bound on the subgradients by using a Bregman distance,
which is embedded
in a mirror descent-type algorithmic design. This approach differs from a classical subgradient method.  Similar ideas were also used in \cite{zhou2020} for online convex optimization.

Linearly bounded expected stochastic subgradients and $\dist(x, X^*)$-expected stochastic subgradient upper bounds ((B$_{\mbox{\tiny {A-D}}}$) in~\Cref{sec:subgrad_upp}) were used by Asi and Duchi~\cite{asi2019importance} to analyze the convergence of stochastic subgradient method to solve weakly convex stochastic optimization problems. Additionally, Asi and Duchi~\cite{asi2019importance} proposed a general subgradient upper bound $\|g(x)+h(x)\|\leq G_{\mbox{\tiny\rm big}}\left(\|x-x^*\|\right)$, for a given $x^*\in\setX^*$, with $G_{\mbox{\tiny\rm big}}(\cdot)$ being a certain nondecreasing nonnegative function. Via this subgradient upper bound, Asi and Duchi~\cite{asi2019importance} studied convex and weakly convex stochastic optimization problem, obtaining  
convergence results under some additional 
conditions.

\subsection{Main contributions and outline of the paper}
In this paper, we introduce a number of 
new subgradient (or expected stochastic subgradient) upper bounding conditions and establish a unified convergence analysis for Prox-SubGrad (Sto-SubGrad) under both the deterministic and stochastic nonsmooth weakly convex optimization settings without Lipschitz continuity assumptions. Using these subgradient upper bounding conditions, we establish a Moreau envelope uniform recursive relationship for the weakly convex setting. The afore-mentioned scheme simplifies and unifies the analysis 
leading to an iteration complexity result 
for Prox-SubGrad (Sto-SubGrad) without the Lipschitz continuity condition. Some new convergence results are also provided. To both deterministic and stochastic subgradient methods on weakly convex optimization problems without the Lipschitz condition, we obtain an $O(1/\sqrt{T})$ convergence rate, which improves to $O(1/{T})$ under the KL 
condition. We also analyze the normalized subgradient scheme ($\alpha_k=\beta_k/\|g(x^k)\|$ and $r=0$) under a
general subgradient upper bound condition for deterministic nonsmooth weakly convex optimization and obtain an extended convergence rate for this case. We also provide a linear convergence analysis under an additional quadratic growth condition to both deterministic and stochastic subgradient methods on weakly convex optimization problems without the Lipschitz condition.

The paper is organized as follows. In~\Cref{sec:pre}, we shall introduce the notations and some preparations 
to lay the ground for our analysis.
In~\Cref{sec:subgrad_upp}, we shall introduce various subgradient upper bounding conditions for the function $\varphi=f+r$ which leads to establishing the uniform recursion of algorithm and the convergence analysis. The relationships between these subgradient upper bounding conditions are presented. Furthermore, in~\Cref{sec:wcvx_wcvx} we discuss the convergence analysis of Prox-SubGrad when $\varphi$ is weakly convex and non-Lipschitz. Then we establish the Moreau envelope 
recursive relations with the subgradient upper bounding conditions (B$_{\mbox{\tiny {D-D}}}$), (B$_1$) and (B$_3$) and (B$_4$), which leads to a rate of convergence for Prox-SubGrad to minimize nonsmooth weakly convex functions without any Lipschitz continuity assumption. By using the generalized subgradient upper bound, we obtain the extended convergence rate results for the projected subgradient method (Proj-SubGrad). We also provide the linear convergence results of Prox-SubGrad under an additional quadratic growth and/or sharpness condition. \Cref{sec:sgd} is devoted to stochastic subgradient method (Sto-SubGrad) to solve nonsmooth stochastic optimization problem. 
We propose the expect stochastic subgradient upper bounding conditions (B$_3'$) and (B$_4'$) which allow us to establish a convergence analysis for Sto-SubGrad in the same unifying fashion. By using an additional quadratic growth condition, we also obtain the linear rate of Sto-SubGrad.
In order not to disrupt too much the flow of the presentation, some of the technical proofs are relegated to the appendices.
\section{Preliminaries}\label{sec:pre}
\subsection{Convexity, weak convexity, and subdifferentials} \label{sec:cvx-wcvx-subgrad}
A function $\varphi:\setX\rightarrow\R$ is convex if for all $x,y\in\setX$ and $0\leq\eta\leq 1$, we have
$\varphi((1-\eta)x+\eta y)\leq(1-\eta)\varphi(x)+\eta\varphi(y)$.
A vector $v\in \R^d$ is called a {subgradient} of $\varphi$ at point $x$ if the subgradient inequality $\varphi(y)\geq\varphi(x)+\langle v,y-x\rangle$
holds for all $y\in\setX$. The set of all subgradients of $\varphi$ at $x$ is denoted by $\partial\varphi(x)$, and is called the (convex) subdifferential of $\varphi$ at $x$.

Now, further along that line, a function $\varphi:\setX\rightarrow\R$ is called {\it $\rho$-weakly convex}\/ if there is $\rho>0$ such that
$\theta(x) := \varphi (x) + \frac{\rho}{2}\|x\|^2$ is convex.
For such a $\rho$-weakly convex function $\varphi$,  its subdifferential is defined as (cf.~\cite[Proposition 4.6]{Vial83})
\(
\partial\varphi(x) := \partial \theta(x)-\rho x,
\)
where $\partial \theta$ is the subdifferential of the regular convex function $\theta(x)$. 
Additionally, it is well known that the $\rho$-weak convexity of $\varphi$ is equivalent to \cite[Proposition 4.8]{Vial83}
\begin{center}
$\varphi(y)\geq\varphi(x)+\langle\nu,y-x\rangle-\frac{\rho}{2}\|x-y\|^2$
\end{center}
for all $x,y\in\setX$ and $\nu \in \partial \varphi(x)$.

\subsection{ The Moreau envelope of weakly convex function}\label{subsec:me_wcvx}
Let $\varphi$ be a $\rho$-weakly convex, proper, and lower semicontinuous function on $\setX$. Then, the Moreau envelope function and the proximal mapping of $\varphi$ are defined as \cite{Rockafellar}:
\begin{align}
	&\varphi_{\lambda}(x):=\min_{y}\left\{\varphi(y)+\mathcal{I}_{\setX}(y)+\frac{1}{2\lambda}\|y-x\|^2\right\}=\min_{y\in\setX}\left\{\varphi(y)+\frac{1}{2\lambda}\|y-x\|^2\right\},\label{eq:ME}\\
	&\prox_{\lambda,\varphi}(x):=\arg\min_{y}\left\{\varphi(y)+\mathcal{I}_{\setX}(y)+\frac{1}{2\lambda}\|y-x\|^2\right\}=\arg\min_{y\in\setX}\left\{\varphi(y)+\frac{1}{2\lambda}\|y-x\|^2\right\}.\label{eq:PM}
\end{align}
Note that whenever $\lambda <\frac{1}{\rho}$, the evaluation of $\varphi_{\lambda}(x)$ for a fixed $x$ reduces to a convex optimization problem. Moreover, as we will show below, if
$\lambda <\frac{1}{\rho}$ then
$\varphi_{\lambda}$ becomes smooth and gradient Lipschitz, even though it remains nonconvex when $\varphi$ is nonconvex.

In the next proposition a series of important properties of the Moreau envelope and the proximal mapping will be presented, whose proofs will be similar to that in~\cite[Propositions 1-4] {zhudenglizhao2021}, by setting  $f=0$, $g=\varphi+\mathcal{I}_{\setX}$ and $D^k(x,y)=\frac{1}{2}\|x-y\|^2$.
\begin{proposition}\label{prop:wcvxf_me}Suppose that $\varphi$ is a $\rho$-weakly convex function. Choose  $\lambda <\frac{1}{\rho}$. For any $x\in\setX$, the following assertions hold true:
\begin{enumerate}[label=\textup{\textrm{(\alph*)}},topsep=0pt,itemsep=0ex,partopsep=0ex]
\item[{\rm (i)}] $\prox_{\lambda,\varphi}(x)$ is well defined, single-valued, and Lipschitz continuous.
\item[{\rm (ii)}] $\varphi_{\lambda}(x)\leq\varphi(x)-\frac{1-\lambda\rho}{2\lambda}\|x-\prox_{\lambda,\varphi}(x)\|^2$.
\item[{\rm (iii)}] $\lambda\dist(0,\partial\varphi(\prox_{\lambda,\varphi}(x))+\mathcal{N}_{\setX}(\prox_{\lambda,\varphi}(x)))\leq\|x-\prox_{\lambda,\varphi}(x)\|\leq\frac{\lambda}{1-\lambda\rho}\dist(0,\partial\varphi(x)+\mathcal{N}_{\setX}(x))$, where  $\mathcal{N}_{\setX}(x):= \{d\mid \langle d, x'-x\rangle\leq0, \forall x'\in\setX\}$ is the normal cone at $x$ with respect to $\setX$.
\item[{\rm (iv)}] $\nabla\varphi_{\lambda}(x)=\frac{1}{\lambda}(x-\prox_{\lambda,\varphi}(x))$ and it is Lipschitz continuous.
\item[{\rm (v)}] $x=\prox_{\lambda,\varphi}(x)$ if and only if $0\in\partial\varphi(x)+\mathcal{N}_{\setX}(x)$.
\end{enumerate}
\end{proposition}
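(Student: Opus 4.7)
The overall plan is to exploit the convex surrogate $\theta(y):=\varphi(y)+\frac{\rho}{2}\|y\|^2$ together with the identity
\[
\varphi(y)+\frac{1}{2\lambda}\|y-x\|^2 \;=\; \theta(y) \;+\; \frac{1-\lambda\rho}{2\lambda}\|y\|^2 \;-\; \frac{1}{\lambda}\langle y,x\rangle \;+\; \frac{1}{2\lambda}\|x\|^2,
\]
which, whenever $\lambda<1/\rho$, makes the inner objective $\psi_x(y):=\varphi(y)+\mathcal{I}_{\setX}(y)+\frac{1}{2\lambda}\|y-x\|^2$ strongly convex with modulus $(1-\lambda\rho)/\lambda$. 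This single reduction powers every item of the proposition: it turns a priori nonconvex facts about $\varphi$ into classical statements about the Moreau envelope of the proper lower semicontinuous convex function $\tilde\theta:=\theta+\mathcal{I}_{\setX}$.

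For (i), (ii), and (v) I would argue directly from strong convexity of $\psi_x$. Strong convexity plus lower semicontinuity gives well-definedness and uniqueness of the minimizer, proving (i); Lipschitz continuity of $\prox_{\lambda,\varphi}$ then follows from the (strongly) monotone inclusion $\frac{1}{\lambda}(x-\prox_{\lambda,\varphi}(x))\in\partial\varphi(\prox_{\lambda,\varphi}(x))+\mathcal{N}_{\setX}(\prox_{\lambda,\varphi}(x))$, which is simply the optimality condition of the prox problem. For (ii), I would apply the strong-convexity inequality $\psi_x(x)\geq \psi_x(p)+\frac{1-\lambda\rho}{2\lambda}\|x-p\|^2$ with $p=\prox_{\lambda,\varphi}(x)$ and rearrange. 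Statement (v) is immediate from the same optimality condition evaluated at $p=x$.

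For (iii), the easy inequality $\lambda\dist(0,\partial\varphi(p)+\mathcal{N}_{\setX}(p))\leq \|x-p\|$ follows because $\frac{1}{\lambda}(x-p)$ is an element of the set whose distance to zero we are measuring. The harder inequality proceeds via monotonicity: picking $\hat{u}\in\partial\varphi(x)+\mathcal{N}_{\setX}(x)$ realizing the distance, I would translate by $\rho$ times the base point to pass to $\partial\theta+\mathcal{N}_{\setX}$, obtaining the pair
\[
\hat{u}+\rho x\in \partial\tilde\theta(x),\qquad \tfrac{1}{\lambda}(x-p)+\rho p\in \partial\tilde\theta(p),
\]
and then invoke monotonicity of $\partial\tilde\theta$ together with Cauchy--Schwarz; the $\rho\|x-p\|^2$ cross term combines with $-\frac{1}{\lambda}\|x-p\|^2$ to produce the factor $(1-\lambda\rho)/\lambda$, yielding the claim. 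The need to shift subgradients to leverage monotonicity of the convex object $\partial\tilde\theta$ is the main conceptual step here.

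For (iv), I would complete the square in the inner problem to recognize
\[
\varphi_{\lambda}(x) \;=\; \tilde\theta_{1/\mu}\!\left(\tfrac{x}{1-\lambda\rho}\right) \;+\; \frac{\|x\|^2}{2\lambda} \;-\; \frac{\|x\|^2}{2\lambda(1-\lambda\rho)}, \qquad \mu:=\frac{1-\lambda\rho}{\lambda},
\]
where the prox of $\tilde\theta$ at $x/(1-\lambda\rho)$ coincides with $\prox_{\lambda,\varphi}(x)$. Since $\tilde\theta$ is convex, the classical Moreau envelope theory applies to $\tilde\theta_{1/\mu}$, giving smoothness and a known gradient formula; substituting and simplifying produces $\nabla\varphi_{\lambda}(x)=\frac{1}{\lambda}(x-\prox_{\lambda,\varphi}(x))$, and Lipschitz continuity of $\nabla\varphi_{\lambda}$ follows from the Lipschitz continuity of $\prox_{\lambda,\varphi}$ obtained in (i). I expect the principal obstacle to be the careful bookkeeping in this change-of-variables reduction — verifying that the constants cancel so that the gradient has precisely the stated form — rather than any deep convex-analytic difficulty.
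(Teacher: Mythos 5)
Your proposal is correct, but note that the paper itself supplies no proof of this proposition: it simply cites \cite[Propositions 1--4]{zhudenglizhao2021} (specialized with $f=0$, $g=\varphi+\mathcal{I}_{\setX}$, and the Euclidean Bregman distance), so there is no in-paper argument to compare against line by line. Your self-contained route --- rewriting the inner objective as the convex function $\tilde\theta=\theta+\mathcal{I}_{\setX}$ plus a quadratic so that $\psi_x$ is $(1-\lambda\rho)/\lambda$-strongly convex, reading (i), (ii), (v) off strong convexity and the optimality inclusion $\frac{1}{\lambda}(x-\prox_{\lambda,\varphi}(x))\in\partial\varphi(\prox_{\lambda,\varphi}(x))+\mathcal{N}_{\setX}(\prox_{\lambda,\varphi}(x))$, getting (iii) from monotonicity of $\partial\tilde\theta$ after shifting subgradients by $\rho$ times the base point, and getting (iv) by completing the square to identify $\varphi_\lambda$ with a rescaled Moreau envelope of the convex $\tilde\theta$ --- is the standard convexification argument behind the cited results, and I checked the constants in your change of variables: with $\mu=(1-\lambda\rho)/\lambda$ one indeed has $\mu/(1-\lambda\rho)=1/\lambda$, so the chain rule collapses to $\nabla\varphi_\lambda(x)=\frac{1}{\lambda}(x-\prox_{\lambda,\varphi}(x))$ as claimed. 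What your version buys is a proof the reader can verify without consulting the reference; what the paper's citation buys is brevity and consistency with the more general Bregman setting of \cite{zhudenglizhao2021}.

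Two minor points you should patch if you write this out in full, neither of which is a genuine gap. First, in (iii) you say you pick $\hat u$ ``realizing the distance''; since $\partial\varphi(x)+\mathcal{N}_{\setX}(x)$ is a sum of closed convex sets and need not be closed, the infimum may not be attained --- take $\hat u$ within $\varepsilon$ of the distance and let $\varepsilon\to 0$ (and observe the inequality is vacuous when the set is empty). Second, your arguments for (iii) and (v) silently use the sum rule $\partial(\varphi+\mathcal{I}_{\setX})=\partial\varphi+\mathcal{N}_{\setX}$; the paper uses the same identification throughout, so this is consistent with its conventions, but it deserves a one-line remark or a qualification-condition citation.
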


In the following corollaries, we observe that the weakly convex function and its Moreau envelope mapping have the same set of optimal solutions and critical points. We refer the readers to~\cite{zhao2022randomized}, where the proofs for the following corollaries can be deduced:
\begin{corollary}\label{cor:me}
	Suppose that $\varphi$ is a $\rho$-weakly convex function. Let $\setX_{\lambda}^*$  denote the  set of minimizers of the problem $\min_{x\in\setX}\ \varphi_{\lambda}(x)$.  Then, the following statements hold true:
	\begin{enumerate}[label=\textup{\textrm{(\alph*)}},topsep=0pt,itemsep=0ex,partopsep=0ex]
		\item[{\rm (i)}] $\varphi_{\lambda}(x)\geq \varphi^*$ for all $ x\in\setX$.
		\item[{\rm (ii)}] $\varphi_{\lambda}(x_{\lambda}^*)=\varphi^*$ for all $x_{\lambda}^*\in\setX_{\lambda}^*$. Consequently, we have $\setX_{\lambda}^*=\setX^*$.
	\end{enumerate}
\end{corollary}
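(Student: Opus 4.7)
The plan is to argue directly from the definition of the Moreau envelope, leveraging only the nonnegativity of its quadratic penalty together with the attainment statement \Cref{prop:wcvxf_me}(i); no further machinery should be needed.

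For (i), I would fix $x\in\setX$ and note that every feasible $y\in\setX$ in the inner minimization satisfies $\frac{1}{2\lambda}\|y-x\|^{2}\geq 0$, so that
\[
\varphi_{\lambda}(x)\;=\;\min_{y\in\setX}\Bigl\{\varphi(y)+\tfrac{1}{2\lambda}\|y-x\|^{2}\Bigr\}\;\geq\;\min_{y\in\setX}\varphi(y)\;=\;\varphi^{*}.
\]
For the first half of (ii), I would test the defining infimum at the admissible point $y=x^{*}\in\setX^{*}$ to get $\varphi_{\lambda}(x^{*})\leq\varphi(x^{*})=\varphi^{*}$; combined with (i) this upgrades to $\varphi_{\lambda}(x^{*})=\varphi^{*}$, which simultaneously yields $\min_{x\in\setX}\varphi_{\lambda}(x)=\varphi^{*}$ and the inclusion $\setX^{*}\subseteq\setX_{\lambda}^{*}$. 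In particular, every $x_{\lambda}^{*}\in\setX_{\lambda}^{*}$ achieves the stated value $\varphi^{*}$.

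The remaining inclusion $\setX_{\lambda}^{*}\subseteq\setX^{*}$, which completes the ``consequently'' part, is where the standing hypothesis $\lambda<1/\rho$ is used, implicitly through \Cref{prop:wcvxf_me}(i), which guarantees that $\prox_{\lambda,\varphi}$ is single-valued and that the defining infimum is genuinely attained. Given $x_{\lambda}^{*}\in\setX_{\lambda}^{*}$, I would set $y^{*}:=\prox_{\lambda,\varphi}(x_{\lambda}^{*})$, so that the equality $\varphi_{\lambda}(x_{\lambda}^{*})=\varphi^{*}$ rewrites as
\[
\bigl(\varphi(y^{*})-\varphi^{*}\bigr)\;+\;\tfrac{1}{2\lambda}\|y^{*}-x_{\lambda}^{*}\|^{2}\;=\;0,
\]
in which both summands are nonnegative. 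Each must therefore vanish separately, forcing $\varphi(y^{*})=\varphi^{*}$ and $y^{*}=x_{\lambda}^{*}$, so $x_{\lambda}^{*}=y^{*}\in\setX^{*}$ as required.

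I anticipate no real obstacle; the one mildly delicate point is that the final step of splitting a single equality into two term-wise equalities is only legitimate because the minimum defining $\varphi_{\lambda}$ is actually attained at a unique minimizer, and this attainment is exactly what \Cref{prop:wcvxf_me}(i) supplies under $\lambda<1/\rho$.
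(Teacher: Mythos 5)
Your proof is correct and complete. The paper itself does not supply an in-text proof of this corollary: it only points the reader to \cite{zhao2022randomized}, so there is no internal argument to compare against. Your elementary route --- lower-bounding the envelope by dropping the nonnegative quadratic term for (i), testing the infimum at $x^*\in\setX^*$ for the inclusion $\setX^*\subseteq\setX_\lambda^*$, and then splitting the equality $\bigl(\varphi(y^*)-\varphi^*\bigr)+\tfrac{1}{2\lambda}\|y^*-x_\lambda^*\|^2=0$ into two vanishing nonnegative terms for the reverse inclusion --- is exactly the standard argument and uses nothing beyond the definition of $\varphi_\lambda$ and the attainment of the inner minimum from \Cref{prop:wcvxf_me}(i) under the standing hypothesis $\lambda<1/\rho$. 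One cosmetic remark: the final splitting step needs only that the minimum is \emph{attained} at some $y^*$, not that the minimizer is unique, so your closing caveat slightly overstates what is required; this does not affect the validity of the proof.
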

\begin{corollary}\label{cor:mecritical} Suppose that $\varphi$ is a $\rho$-weakly convex function.   Let  $\overline{\setX}_{\lambda}$ be the set of critical points of the problem  $\min_{x\in\setX}\ \varphi_{\lambda}(x)$.  Then, we have  $\overline{\setX}=\overline{\setX}_{\lambda}$.
\end{corollary}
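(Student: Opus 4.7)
The plan is to prove both inclusions $\overline{\setX}\subseteq\overline{\setX}_\lambda$ and $\overline{\setX}_\lambda\subseteq\overline{\setX}$ by combining parts (iv) and (v) of \Cref{prop:wcvxf_me}, using the proximal mapping as the pivot object that characterizes both types of critical points. The key observation is that the Moreau envelope $\varphi_\lambda$ already absorbs the indicator $\mathcal{I}_\setX$ through its inner minimization, so the stationarity condition for $\min_{x\in\setX}\varphi_\lambda(x)$ reduces to $\nabla\varphi_\lambda(x)=0$ (since $\varphi_\lambda$ is smooth on $\R^d$ by (iv) and its minimizers lie in $\setX$ by \Cref{cor:me}), while the stationarity condition for the original problem is $0\in\partial\varphi(x)+\mathcal{N}_\setX(x)$.

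For the forward inclusion, I would start with $x\in\overline{\setX}$, i.e., $0\in\partial\varphi(x)+\mathcal{N}_\setX(x)$. By part (v), this is equivalent to $x=\prox_{\lambda,\varphi}(x)$. Substituting into the gradient formula of part (iv) gives $\nabla\varphi_\lambda(x)=\frac{1}{\lambda}(x-\prox_{\lambda,\varphi}(x))=0$, so $x$ is critical for $\varphi_\lambda$, i.e., $x\in\overline{\setX}_\lambda$.

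For the reverse inclusion, I would take $x\in\overline{\setX}_\lambda$, meaning $\nabla\varphi_\lambda(x)=0$. By part (iv), this forces $x=\prox_{\lambda,\varphi}(x)$. Since the proximal mapping takes values in $\setX$ by construction (the inner minimization is over $\setX$), we get $x\in\setX$, and then part (v) yields $0\in\partial\varphi(x)+\mathcal{N}_\setX(x)$, so $x\in\overline{\setX}$.

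The only subtle point, and therefore the main obstacle to watch for, is the precise definition of a critical point of the constrained problem $\min_{x\in\setX}\varphi_\lambda(x)$. If one instead adopts the stronger reading $0\in\nabla\varphi_\lambda(x)+\mathcal{N}_\setX(x)$, the forward direction remains immediate, but the reverse would require excluding the possibility that $\nabla\varphi_\lambda(x)\ne 0$ is cancelled by a nonzero normal vector. This can be handled by invoking the two-sided bound in part (iii): the right-hand inequality forces $x=\prox_{\lambda,\varphi}(x)$ whenever $0\in\partial\varphi(x)+\mathcal{N}_\setX(x)$, while the left-hand inequality ensures that $x=\prox_{\lambda,\varphi}(x)$ implies $0\in\partial\varphi(x)+\mathcal{N}_\setX(x)$, so the two stationarity notions coincide in either reading.
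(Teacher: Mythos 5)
Your main argument is correct, and it is the natural one: parts (iv) and (v) of \Cref{prop:wcvxf_me} make $\overline{\setX}=\{x : x=\prox_{\lambda,\varphi}(x)\}=\{x:\nabla\varphi_{\lambda}(x)=0\}=\overline{\setX}_{\lambda}$ essentially a two-line chain of equivalences. The paper itself does not spell out a proof of this corollary (it defers to the reference \cite{zhao2022randomized}), so there is no in-paper argument to compare against; your route through the proximal mapping is the standard one and is what that reference does as well.

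The one place where your justification is imprecise is the handling of the ``subtle point.'' Under the stronger reading $0\in\nabla\varphi_{\lambda}(x)+\mathcal{N}_{\setX}(x)$, the two inequalities in part (iii) do not close the gap: taken together they only re-derive the equivalence $x=\prox_{\lambda,\varphi}(x)\Longleftrightarrow 0\in\partial\varphi(x)+\mathcal{N}_{\setX}(x)$, which is already part (v); they say nothing about whether a nonzero $\nabla\varphi_{\lambda}(x)$ could be cancelled by a normal vector. The correct (and simpler) resolution is a direct normal-cone test: if $-\nabla\varphi_{\lambda}(x)=\frac{1}{\lambda}\left(\prox_{\lambda,\varphi}(x)-x\right)\in\mathcal{N}_{\setX}(x)$, then plugging $x'=\prox_{\lambda,\varphi}(x)\in\setX$ into the defining inequality of $\mathcal{N}_{\setX}(x)$ gives $\frac{1}{\lambda}\|\prox_{\lambda,\varphi}(x)-x\|^{2}\leq 0$, hence $x=\prox_{\lambda,\varphi}(x)$ and $\nabla\varphi_{\lambda}(x)=0$. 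With that substitution your proof is complete under either reading of criticality for the envelope problem.
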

\begin{lemma}
\label{lemma:phi_lambda_1} If $\varphi$ is weakly convex, then for any $x$ and $z\in\setX$ we have
\[
\varphi(z)-\varphi(\prox_{\lambda,\varphi}(x))\geq\frac{\|\prox_{\lambda,\varphi}(x)-x\|^2-\|z-x\|^2}{2\lambda}.
\]
\end{lemma}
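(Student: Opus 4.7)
The plan is extremely direct: the inequality is essentially a restatement of the fact that $\prox_{\lambda,\varphi}(x)$ is a minimizer of the perturbed objective $y \mapsto \varphi(y) + \frac{1}{2\lambda}\|y-x\|^2$ over $\setX$. I do not expect any real obstacle here; weak convexity is invoked only insofar as it guarantees, via \Cref{prop:wcvxf_me}(i) (with $\lambda < 1/\rho$ implicitly), that $\prox_{\lambda,\varphi}(x)$ is well defined and single-valued. No explicit use of the weak convexity inequality or the gradient formula for $\nabla\varphi_\lambda$ is needed.

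Concretely, let $u := \prox_{\lambda,\varphi}(x)$. By definition \eqref{eq:PM}, $u \in \setX$ and
\[
\varphi(u) + \frac{1}{2\lambda}\|u-x\|^2 \;=\; \min_{y\in\setX}\Bigl\{\varphi(y) + \frac{1}{2\lambda}\|y-x\|^2\Bigr\}.
\]
Since $z \in \setX$ is a feasible competitor, this minimality immediately gives
\[
\varphi(u) + \frac{1}{2\lambda}\|u-x\|^2 \;\leq\; \varphi(z) + \frac{1}{2\lambda}\|z-x\|^2.
\]
Rearranging the terms yields exactly
\[
\varphi(z) - \varphi(u) \;\geq\; \frac{\|u-x\|^2 - \|z-x\|^2}{2\lambda},
\]
which is the claimed inequality. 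That completes the proof.

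One remark on tightness: a stronger version is available whenever $\lambda < 1/\rho$, since then $y \mapsto \varphi(y) + \frac{1}{2\lambda}\|y-x\|^2$ is $(1/\lambda - \rho)$-strongly convex, so one could add a quadratic term $\frac{1/\lambda - \rho}{2}\|z-u\|^2$ on the right-hand side. The statement as given is the minimalist form that suffices for the later recursion arguments, so no strengthening is needed.
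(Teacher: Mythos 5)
Your proof is correct and is essentially identical to the paper's: both simply compare $z$ against the minimizer $\prox_{\lambda,\varphi}(x)$ in the definition of the Moreau envelope and rearrange. The remark about the strongly convex strengthening is a nice aside but not needed.
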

\begin{proof} By the definition of $\prox_{\lambda,\varphi}(x)$, we have
\[
\begin{aligned}
\varphi(z)+\frac{1}{2\lambda}\|z-x\|^2&\geq\min_{y\in\setX} \left[ \varphi(y)+\frac{1}{2\lambda}\|y-x\|^2\right] \\
&=\varphi(\prox_{\lambda,\varphi}(x))+\frac{1}{2\lambda}\|\prox_{\lambda,\varphi}(x)-x\|^2,
\end{aligned}
\]
implying the desired result.
\end{proof}
\begin{lemma}
\label{lemma:phi_lambda_2} If $\varphi$ is weakly convex, then for any $x$ and $z\in\setX$ we have
\[
\varphi_{\lambda}(z)-\varphi_{\lambda}(x)\leq\frac{\|\prox_{\lambda,\varphi}(x)-z\|^2-\|\prox_{\lambda,\varphi}(x)-x\|^2}{2\lambda}.
\]
\end{lemma}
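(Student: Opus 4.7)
The plan is to exploit the variational definition of the Moreau envelope directly, so that $\varphi(\prox_{\lambda,\varphi}(x))$ appears on both sides and cancels. Concretely, I will evaluate the minimization defining $\varphi_\lambda(z)$ at the single specific test point $y=\prox_{\lambda,\varphi}(x)$, which yields the upper bound
\[
\varphi_\lambda(z)\leq\varphi(\prox_{\lambda,\varphi}(x))+\frac{1}{2\lambda}\|\prox_{\lambda,\varphi}(x)-z\|^2.
\]

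Next I will use that, under the implicit standing assumption $\lambda<1/\rho$ (which guarantees via Proposition~\ref{prop:wcvxf_me}(i) that the prox is well defined and single-valued), the point $\prox_{\lambda,\varphi}(x)$ actually attains the minimum in the definition of $\varphi_\lambda(x)$, so equality
\[
\varphi_\lambda(x)=\varphi(\prox_{\lambda,\varphi}(x))+\frac{1}{2\lambda}\|\prox_{\lambda,\varphi}(x)-x\|^2
\]
holds. Subtracting this equality from the previous inequality eliminates the $\varphi(\prox_{\lambda,\varphi}(x))$ term and produces exactly the claimed bound.

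There is essentially no obstacle: the only subtle point to note is the asymmetry with Lemma~\ref{lemma:phi_lambda_1}, where the suboptimal candidate was plugged into the minimization at the same base point $x$ whereas here the suboptimal candidate is plugged into the minimization at the \emph{shifted} base point $z$. No weak-convexity inequality and no subdifferential calculus are needed for this lemma itself; weak convexity enters only through the appeal to Proposition~\ref{prop:wcvxf_me}(i) to justify that the prox at $x$ is a bona fide minimizer. Consequently the proof will be just two displayed lines followed by a one-line subtraction.
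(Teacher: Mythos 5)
Your proposal is correct and is essentially identical to the paper's proof: both evaluate the minimization defining $\varphi_{\lambda}(z)$ at the test point $y=\prox_{\lambda,\varphi}(x)$ and then substitute $\varphi(\prox_{\lambda,\varphi}(x))=\varphi_{\lambda}(x)-\frac{1}{2\lambda}\|\prox_{\lambda,\varphi}(x)-x\|^2$. Your remark that weak convexity enters only to guarantee the prox is a bona fide minimizer (via $\lambda<1/\rho$) is an accurate reading of the standing assumptions.
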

\begin{proof} By the definition of $\varphi_{\lambda}(z)$ and $\prox_{\lambda,\varphi}(x)$, we have
\[
\begin{aligned}
\varphi_{\lambda}(z)&=\min_{y\in\setX} \left[\varphi(y)+\frac{1}{2\lambda}\|y-z\|^2 \right] \\
&\leq\varphi(\prox_{\lambda,\varphi}(x))+\frac{1}{2\lambda}\|\prox_{\lambda,\varphi}(x)-z\|^2\\
&=\varphi_{\lambda}(x)-\frac{1}{2\lambda}\|\prox_{\lambda,\varphi}(x)-x\|^2+\frac{1}{2\lambda}\|\prox_{\lambda,\varphi}(x)-z\|^2,
\end{aligned}
\]
which is the desired result.
\end{proof}
\subsection{Uniform Kurdyka-{\/L}ojasiewicz (KL) and global KL property for $\varphi$ and $\varphi_{\lambda}$}
In general, if a function has a relevant KL type property, then it may help to improve the convergence rate of the algorithm. In our bid for 
a better convergence rate of the Prox-SubGrad algorithm for the weakly convex case (see~\Cref{cor:qg_wcvx}), we need the notion of uniform KL property with exponent $1/2$ for the 
Moreau envelope $\varphi_{\lambda}$.

\begin{definition}\label{KL properties}
 {\rm For a $\rho$-weakly convex function $\varphi:\setX\rightarrow\R$, consider the following bounded KL property for $\varphi$ and $\varphi_{\lambda}(x)=\min_{y\in\setX}\left[\varphi(y)+\frac{1}{2\lambda}\|y-x\|^2\right]$ with $\lambda<\frac{1}{\rho}$. Let $V:=\overline{\mathbb{B}}_{\epsilon}(\bar{x})\cap\{x\in\setX\mid \bar{\varphi}<\varphi(x)<\bar{\varphi}+\gamma\}$ with $\bar{x}\in\overline{\setX}$, where $\overline{\mathbb{B}}_{\epsilon}(\bar{x})=\{x\in\setX\mid\|x-\bar{x}\|\leq\epsilon\}$.
 We introduce three KL type error bound conditions regarding $\varphi$ and $\varphi_{\lambda}$ as follows:
\begin{enumerate}[label=\textup{\textrm{(\alph*)}},topsep=0pt,itemsep=0ex,partopsep=0ex]
\item $\varphi$ is said to satisfy the KL property with exponent $1/2$ at $\bar x$ if  there exists $\kappa_{\tiny{ KL}}^{\varphi}>0$ such that $\varphi(x)-
\bar{\varphi}
\leq\kappa_{\tiny{ KL}}^{\varphi} \dist^2(0,\partial \varphi(x)+\mathcal{N}_{\setX}(x))$, $\forall x\in V$.
\item $\varphi_{\lambda}$ is said to satisfy the KL property with exponent $1/2$ at $\bar x$ if there exists $\kappa_{\tiny KL}^{\varphi_{\lambda}}>0$ such that $\varphi_{\lambda}(x)-
  \bar{\varphi}
  \leq\kappa_{\tiny KL}^{\varphi_{\lambda}} \|\nabla\varphi_{\lambda}(x)\|^2$, $\forall x\in V$.
\item $\varphi_{\lambda}$ is said to satisfy the uniform KL property with exponent $1/2$ if there exists $\kappa_{\tiny uKL}^{\varphi_{\lambda}}>0$ such that $\varphi_{\lambda}(x)-\bar{\varphi}\leq\kappa_{\tiny uKL}^{\varphi_{\lambda}} \|\nabla\varphi_{\lambda}(x)\|^2$, $\forall x\in\mathfrak{B}(\bar{\varphi},\epsilon,\nu)$, where $\mathfrak{B}(\bar{\varphi},\epsilon,\nu)=\{x\in\setX\mid \dist(x,\Omega)\leq\epsilon, \bar{\varphi}<\varphi_{\lambda}(x)<\bar{\varphi}+\nu\}$ with $\Omega=\{x\in\setX\mid\varphi_{\lambda}(x)=\bar{\varphi}$ for some $\bar{x}\in\overline{\setX}$ and $\bar{\varphi}=\varphi(\bar{x})\}$.
\end{enumerate}
}
\end{definition}

The following proposition depicts the relationship among the three types of KL properties with exponent $1/2$ for the weakly convex functions as introduced in Definition~\ref{KL properties}.

\begin{proposition}[KL and Uniform KL (uKL)]\label{prop:eb}
For the three KL type conditions introduced in Definition~\ref{KL properties}, we have:
\begin{itemize}
\item[{\rm(i)}] {\rm (a)} $\Longrightarrow$ {\rm (b)}.
\item[{\rm(ii)}] Let $\Omega$ be a nonempty compact set, and suppose that $\varphi$ satisfies the KL property with exponent $1/2$ at each point in $\Omega$. Then, {\rm (c)} holds.
\end{itemize}
\end{proposition}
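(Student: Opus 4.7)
My plan for (i) is to exploit the identity $\varphi_{\lambda}(x)=\varphi(y)+\frac{1}{2\lambda}\|y-x\|^2$ with $y:=\prox_{\lambda,\varphi}(x)$, giving
$\varphi_{\lambda}(x)-\bar{\varphi}=(\varphi(y)-\bar{\varphi})+\frac{\lambda}{2}\|\nabla\varphi_{\lambda}(x)\|^2$
after rewriting the quadratic term via $\|y-x\|=\lambda\|\nabla\varphi_{\lambda}(x)\|$ from \Cref{prop:wcvxf_me}(iv). The left inequality in \Cref{prop:wcvxf_me}(iii) supplies $\dist(0,\partial\varphi(y)+\mathcal{N}_{\setX}(y))\leq\tfrac{1}{\lambda}\|y-x\|=\|\nabla\varphi_{\lambda}(x)\|$, so whenever hypothesis (a) applies at $y$ I obtain $\varphi(y)-\bar{\varphi}\leq\kappa_{\mathrm{KL}}^{\varphi}\|\nabla\varphi_{\lambda}(x)\|^2$, and therefore the KL inequality for $\varphi_{\lambda}$ with constant $\kappa_{\mathrm{KL}}^{\varphi}+\lambda/2$. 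The only case where (a) is not directly invokable is $\varphi(y)\leq\bar{\varphi}$; there the first summand is nonpositive and the same bound (with constant $\lambda/2$) holds for free, so the two cases merge.

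To justify applying (a) at $y$, I would shrink the radius around $\bar{x}$. Since $\bar{x}=\prox_{\lambda,\varphi}(\bar{x})$ by \Cref{prop:wcvxf_me}(v) and $\prox_{\lambda,\varphi}$ is Lipschitz by \Cref{prop:wcvxf_me}(i), making $\|x-\bar{x}\|$ small also forces $\|y-\bar{x}\|\leq\epsilon$, while continuity of $\varphi_{\lambda}$ (from \Cref{prop:wcvxf_me}(iv)) combined with $\varphi(y)\leq\varphi_{\lambda}(x)$ places $\varphi(y)$ in the $\gamma$-strip of $\varphi$ whenever $\varphi_{\lambda}(x)$ lies in a suitably small strip about $\bar{\varphi}$. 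Part (ii) then follows from a standard compactness argument: for each $\bar{x}\in\Omega$, step (i) supplies constants $\epsilon_{\bar{x}},\gamma_{\bar{x}},\kappa_{\bar{x}}$ making (b) hold on $\overline{\mathbb{B}}_{\epsilon_{\bar{x}}}(\bar{x})$; extract a finite subcover $\{\mathbb{B}_{\epsilon_{\bar{x}_i}/2}(\bar{x}_i)\}_{i=1}^{N}$ of $\Omega$ and set $\epsilon:=\tfrac{1}{2}\min_i\epsilon_{\bar{x}_i}$, $\nu:=\min_i\gamma_{\bar{x}_i}$, and $\kappa_{\mathrm{uKL}}^{\varphi_{\lambda}}:=\max_i\kappa_{\bar{x}_i}$. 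Any $x\in\mathfrak{B}(\bar{\varphi},\epsilon,\nu)$ satisfies $\dist(x,\Omega)\leq\epsilon$, so some $z\in\Omega$ lies within $\epsilon$ of $x$, and $z$ sits in some half-ball $\mathbb{B}_{\epsilon_{\bar{x}_i}/2}(\bar{x}_i)$; the triangle inequality then yields $x\in\overline{\mathbb{B}}_{\epsilon_{\bar{x}_i}}(\bar{x}_i)$, and the local inequality at $\bar{x}_i$ delivers the uniform bound.

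The main obstacle, in my view, is the neighborhood bookkeeping: the set $V$ used by the bounded KL definition is cut out by a strip of $\varphi$, whereas (c) uses a strip of $\varphi_{\lambda}$, and a single uniform $\nu$ must work across every center. This matching hinges on $\varphi_{\lambda}(\bar{x}_i)=\varphi(\bar{x}_i)=\bar{\varphi}$, which follows from \Cref{prop:wcvxf_me}(v) and the identity $\overline{\setX}=\overline{\setX}_{\lambda}$ from \Cref{cor:mecritical}, together with the dominating inequality $\varphi_{\lambda}\leq\varphi$ of \Cref{prop:wcvxf_me}(ii); shrinking $\nu$ then ensures the $\varphi_{\lambda}$-strip is contained in every $\gamma_{\bar{x}_i}$-strip of $\varphi$ about the corresponding points of $\Omega$.
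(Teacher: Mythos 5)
For part (i) your argument coincides with the paper's: the same decomposition $\varphi_{\lambda}(x)-\bar{\varphi}=\bigl(\varphi(\prox_{\lambda,\varphi}(x))-\bar{\varphi}\bigr)+\frac{1}{2\lambda}\|x-\prox_{\lambda,\varphi}(x)\|^2$, the KL inequality of $\varphi$ applied at $\prox_{\lambda,\varphi}(x)$, and the left inequality of \Cref{prop:wcvxf_me}(iii) together with (iv) to convert the residual into $\|\nabla\varphi_{\lambda}(x)\|$, yielding the constant $\kappa_{\tiny KL}^{\varphi}+\lambda/2$. You are in fact more careful than the paper on the domain issue: the paper silently applies the KL inequality at $\prox_{\lambda,\varphi}(x)$ without checking that this point lies in $V$, whereas you handle the ball constraint via the Lipschitz continuity of $\prox_{\lambda,\varphi}$ and $\bar{x}=\prox_{\lambda,\varphi}(\bar{x})$, the upper strip via $\varphi(\prox_{\lambda,\varphi}(x))\leq\varphi_{\lambda}(x)$, and the lower strip via the observation that the first summand is nonpositive there. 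For part (ii) the paper gives no argument at all --- it cites Lemma 2.2 of the Yu--Li--Pong reference --- so your finite-subcover construction is a genuinely different, self-contained route; it is sound in outline, with one piece of bookkeeping to tighten: the half-ball radii in the subcover should be $\epsilon_{\bar{x}_i}/(2L)$ rather than $\epsilon_{\bar{x}_i}/2$, where $L$ is the Lipschitz constant of $\prox_{\lambda,\varphi}$, so that $x\in\overline{\mathbb{B}}_{\epsilon_{\bar{x}_i}/L}(\bar{x}_i)$ forces $\prox_{\lambda,\varphi}(x)\in\overline{\mathbb{B}}_{\epsilon_{\bar{x}_i}}(\bar{x}_i)$ as required by the local KL inequality (you also implicitly use that the points of $\Omega$ are critical and share the common value $\bar{\varphi}$, which is how the paper's Definition~\ref{KL properties}(c) is meant to be read). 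These are refinements, not gaps; the proposal is correct.
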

\begin{proof}
{\rm(i)} By taking $x=\prox_{\lambda, \varphi}(x)$ in the KL inequality of $\varphi$ and the definition of $\varphi_{\lambda}$, we have
\[
\begin{aligned}
	\varphi_{\lambda}(x)-
 \bar{\varphi}
 &=\varphi(\prox_{\lambda, \varphi}(x))-
 \bar{\varphi}
 +\frac{1}{2\lambda}\|x-\prox_{\lambda, \varphi}(x)\|^2\\
&\leq\kappa_{\tiny{ KL}}^{\varphi}\dist^2(0,\partial \varphi(\prox_{\lambda, \varphi}(x))+\mathcal{N}_{\setX}(\prox_{\lambda, \varphi}(x)))+\frac{1}{2\lambda}\|x-\prox_{\lambda, \varphi}(x)\|^2.
\end{aligned}
	\]
By (iii) of~\Cref{prop:wcvxf_me} and the fact that $\|\nabla\varphi_{\lambda}(x)\|=\frac{1}{\lambda}\|x-\prox_{\lambda, \varphi}(x)\|$, we obtain the desired result.

{\rm(ii)} See Lemma 2.2 of~\cite{TK22}.
\end{proof}

For stochastic subgradient method, 
the notion of {\it global}\/ KL property with exponent $1/2$ of the function $f$ 
becomes useful. 
The notion of global quadratic growth condition and global metric subregularity are used to study the relationship among subgradient upper bounds (cf.~\Cref{prop:upperbound3}).

\begin{definition} \label{gKL}
{\rm For the functions $\varphi$ and $\varphi_{\lambda}:\setX\rightarrow\R$ as defined in \eqref{eq:ME}, 
let us consider the following six global error bound conditions regarding $\varphi$ and $\varphi_{\lambda}$:
\begin{enumerate}
\item[{\rm (a)}] $\varphi$ is said to satisfy the global KL property (gKL) with exponent $1/2$ if there exists $\kappa_{\tiny gKL}^{\varphi}>0$ such that $\varphi(x)-\varphi^*\leq\kappa_{\tiny gKL}^{\varphi} \dist^2(0,\partial \varphi(x)+\mathcal{N}_{\setX}(x))$, $\forall x\in\setX$.
\item[{\rm (b)}] $\varphi_{\lambda}$ is said to satisfy the global KL inequality if there exists $\kappa_{\tiny gKL}^{\varphi_{\lambda}}>0$ such that $\varphi_{\lambda}(x)-\varphi^*\leq\kappa_{\tiny gKL}^{\varphi_{\lambda}}\|\nabla\varphi_{\lambda}(x)\|^2$, $\forall x\in\setX$.
\item[{\rm (c)}] $\varphi_{\lambda}$ is said to satisfy the global quadratic growth condition (gQG) if there exists $\kappa_{\tiny gQG}^{\varphi_{\lambda}}>0$ such that $
\dist^2(x,\setX^*)\leq\kappa_{\tiny gQG}^{\varphi_{\lambda}}[\varphi_{\lambda}(x)-\varphi^*]$, $\forall x\in\setX$.
\item[{\rm (d)}] $\varphi$ is said to satisfy the global quadratic growth condition if there exists $\kappa_{\tiny gQG}^{\varphi}>0$ such that $\dist^2(x,\setX^*)\leq\kappa_{\tiny gQG}^{\varphi}[\varphi(x)-\varphi^*]$, $\forall x\in\setX$.
\item[{\rm (e)}] $\partial\varphi$ is said to satisfy the global metric subregularity (gMS) if there exists $\kappa_{\tiny gMS}^{\varphi}>0$ such that $\dist(x,\overline{\setX})\leq\kappa_{\tiny gMS}^{\varphi}\dist(0,\partial\varphi(x)+\mathcal{N}_{\setX}(x))$, $\forall x\in\setX$.
\item[{\rm (f)}] $\nabla\varphi_{\lambda}$ is said to satisfy the global metric subregularity if there exists $\kappa_{\tiny gMS}^{\varphi_{\lambda}}>0$ such that $\dist(x,\overline{\setX})\leq\kappa_{\tiny gMS}^{\varphi_{\lambda}}\|\nabla\varphi_{\lambda}(x)\|$, $\forall x\in\setX$.
\end{enumerate}
}
\end{definition}
The next proposition establishes the relations regarding the global KL condition 
and other global error bounds for $\varphi$ and $\varphi_{\lambda}$.

\begin{proposition}[Global error bounds]\label{prop:global_eb}

Let $\varphi$ be $\rho$-weakly convex. For the six global KL type error bound conditions introduced in
Definition~\ref{gKL}, we have:
\begin{itemize}
\item[{\rm (i)}] {\rm(a)} $\Longrightarrow$ {\rm(b)} $\Longrightarrow$ {\rm(c)} $\Longrightarrow$ {\rm(d)};
\item[{\rm (ii)}] {\rm(a)} $\Longrightarrow$ {\rm(e)};
\item[{\rm (iii)}] {\rm(e)} $\Longleftrightarrow$ {\rm(f)}.
\end{itemize}
\end{proposition}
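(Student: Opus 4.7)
The three parts are largely built on the same two tools: the Moreau envelope identities from \Cref{prop:wcvxf_me}, especially part (iii), which sandwiches $\|\nabla\varphi_\lambda(x)\|$ between subdifferential quantities evaluated at $x$ and at $\prox_{\lambda,\varphi}(x)$, and \Cref{cor:me}, which identifies $\setX^*$ with $\setX_\lambda^*$ and tells us that $\varphi^*$ is the global infimum of $\varphi_\lambda$. I will first prove the chain (a)$\Rightarrow$(b)$\Rightarrow$(c)$\Rightarrow$(d) of part (i), then deduce (ii) as a corollary of (i) plus (a), and finally handle the equivalence (iii) directly from the two-sided bound in \Cref{prop:wcvxf_me}(iii).

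\textbf{Part (i).} For (a)$\Rightarrow$(b), I will plug $\prox_{\lambda,\varphi}(x)$ into the global KL inequality for $\varphi$ and use the identity $\varphi_\lambda(x)=\varphi(\prox_{\lambda,\varphi}(x))+\tfrac{1}{2\lambda}\|x-\prox_{\lambda,\varphi}(x)\|^2$. The subgradient term on the right is controlled by $\|\nabla\varphi_\lambda(x)\|$ via the left inequality in \Cref{prop:wcvxf_me}(iii), while the squared-distance term equals $\tfrac{\lambda}{2}\|\nabla\varphi_\lambda(x)\|^2$ by \Cref{prop:wcvxf_me}(iv). Combining these yields (b) with $\kappa_{gKL}^{\varphi_\lambda}=\kappa_{gKL}^{\varphi}+\lambda/2$. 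The main obstacle will be (b)$\Rightarrow$(c). Here I exploit the fact that $\varphi_\lambda$ is $C^{1,1}$ on $\R^d$ and run the gradient flow $\dot x(t)=-\nabla\varphi_\lambda(x(t))$ starting from $x(0)=x\in\setX$. Along this trajectory,
\[
\frac{d}{dt}\sqrt{\varphi_\lambda(x(t))-\varphi^*}=-\frac{\|\nabla\varphi_\lambda(x(t))\|^2}{2\sqrt{\varphi_\lambda(x(t))-\varphi^*}}\leq -\frac{\|\nabla\varphi_\lambda(x(t))\|}{2\sqrt{\kappa_{gKL}^{\varphi_\lambda}}},
\]
where the inequality uses (b). Integrating from $0$ to $\infty$ bounds the trajectory length by $2\sqrt{\kappa_{gKL}^{\varphi_\lambda}}\sqrt{\varphi_\lambda(x)-\varphi^*}$, so $x(t)$ converges to some $x^\infty$ with $\nabla\varphi_\lambda(x^\infty)=0$; by (b) again, $\varphi_\lambda(x^\infty)=\varphi^*$, and \Cref{cor:me}(ii) forces $x^\infty\in\setX^*$. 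The length bound then reads $\dist(x,\setX^*)\leq 2\sqrt{\kappa_{gKL}^{\varphi_\lambda}}\sqrt{\varphi_\lambda(x)-\varphi^*}$, which after squaring is (c). Finally, (c)$\Rightarrow$(d) is immediate from \Cref{prop:wcvxf_me}(ii), which gives $\varphi_\lambda(x)\leq\varphi(x)$ for every $x\in\setX$ (since $\lambda<1/\rho$).

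\textbf{Part (ii).} Once (i) is established, any $\varphi$ satisfying (a) also satisfies (d), so
\[
\dist^2(x,\setX^*)\leq\kappa_{gQG}^{\varphi}\bigl(\varphi(x)-\varphi^*\bigr)\leq\kappa_{gQG}^{\varphi}\,\kappa_{gKL}^{\varphi}\,\dist^2\bigl(0,\partial\varphi(x)+\mathcal{N}_{\setX}(x)\bigr).
\]
Taking square roots and using $\dist(x,\overline\setX)\leq\dist(x,\setX^*)$ since $\setX^*\subseteq\overline\setX$ gives (e).

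\textbf{Part (iii).} For (e)$\Rightarrow$(f), I apply (e) at $\prox_{\lambda,\varphi}(x)$, bound the resulting subdifferential quantity by $\|\nabla\varphi_\lambda(x)\|$ via the left inequality of \Cref{prop:wcvxf_me}(iii), and then triangulate $\dist(x,\overline\setX)\leq\|x-\prox_{\lambda,\varphi}(x)\|+\dist(\prox_{\lambda,\varphi}(x),\overline\setX)$, using $\|x-\prox_{\lambda,\varphi}(x)\|=\lambda\|\nabla\varphi_\lambda(x)\|$. For (f)$\Rightarrow$(e), the right inequality of \Cref{prop:wcvxf_me}(iii) combined with \Cref{prop:wcvxf_me}(iv) gives $\|\nabla\varphi_\lambda(x)\|\leq\frac{1}{1-\lambda\rho}\dist(0,\partial\varphi(x)+\mathcal{N}_{\setX}(x))$; plugging this into (f) yields (e) with constant $\kappa_{gMS}^{\varphi_\lambda}/(1-\lambda\rho)$. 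The only mildly delicate point is tracking that all constants depend continuously on $\lambda\in(0,1/\rho)$, which is clear from the formulas.
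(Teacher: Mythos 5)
Your proof is correct, and for the parts the paper proves directly --- (a)$\Rightarrow$(b), (c)$\Rightarrow$(d), and part (ii) --- your argument coincides with the paper's: the same evaluation of the KL inequality at $\prox_{\lambda,\varphi}(x)$ combined with \Cref{prop:wcvxf_me}(iii)--(iv), the same constant $\kappa_{gKL}^{\varphi}+\lambda/2$, the same chaining of (d) and (a) for (ii). Where you genuinely diverge is that the paper outsources the two remaining steps to citations --- (b)$\Rightarrow$(c) is delegated to Theorem 2 of \cite{karimi2016linear}, and (e)$\Leftrightarrow$(f) to \cite[Lemma 4.2, Corollary 4.1]{zhao2022randomized} --- whereas you prove both from scratch. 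Your gradient-flow length bound for (b)$\Rightarrow$(c) is the standard PL$\Rightarrow$QG argument and is sound (modulo the usual care at a time where $\varphi_\lambda(x(t))=\varphi^*$, where one simply stops the flow); the one point worth flagging is that condition (b) is stated only for $x\in\setX$, while the trajectory of $\dot x=-\nabla\varphi_\lambda(x)$ need not remain in $\setX$. This is harmless within the chain, since your derivation of (a)$\Rightarrow$(b) never uses $x\in\setX$ and therefore yields the KL inequality for $\varphi_\lambda$ on all of $\R^d$, but as a standalone implication it deserves a sentence. Your short proof of (iii) --- triangulating through $\prox_{\lambda,\varphi}(x)$ for (e)$\Rightarrow$(f) and invoking the right-hand inequality of \Cref{prop:wcvxf_me}(iii) for the converse --- is clean and arguably preferable to the citation: it makes the proposition self-contained and exhibits the explicit constants $\kappa_{gMS}^{\varphi_\lambda}=\lambda+\kappa_{gMS}^{\varphi}$ and $\kappa_{gMS}^{\varphi}=\kappa_{gMS}^{\varphi_\lambda}/(1-\lambda\rho)$.
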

\begin{proof}
{\rm(i)} (a) $\Longrightarrow$ (b): Since $\varphi$ satisfies the global KL property with exponent $1/2$, by statements (iii) and (iv) of~\Cref{prop:wcvxf_me}, we have
\[
\begin{aligned}
\varphi(\prox_{\lambda,\varphi}(x))-\varphi^*&\leq\kappa_{\tiny gKL}^{\varphi}\dist^2(0,\partial \varphi(\prox_{\lambda,\varphi}(x))+\mathcal{N}_{\setX}(\prox_{\lambda, \varphi}(x)))\\
&\leq\frac{\kappa_{\tiny gKL}^{\varphi}}{\lambda^2}\|x-\prox_{\lambda,\varphi}(x)\|^2=\kappa_{\tiny gKL}^{\varphi}\|\nabla\varphi_{\lambda}(x)\|^2.
\end{aligned}
\]
Adding the equation $\frac{1}{2\lambda}\|x-\prox_{\lambda,\varphi}(x)\|^2=\frac{\lambda}{2}\|\nabla\varphi_{\lambda}(x)\|^2$ on the above inequality, we obtain
\[
\varphi_{\lambda}(x)-\varphi^*\leq \left(\kappa_{\tiny gKL}^{\varphi}+\frac{\lambda}{2}\right)\|\nabla\varphi_{\lambda}(x)\|^2,
\]
which yields that $\varphi_{\lambda}$ satisfies the KL inequality.

(b) $\Longrightarrow$ (c): See Theorem 2 of~\cite{karimi2016linear}.

(c) $\Longrightarrow$ (d): By~\Cref{cor:me} and statement (b) of~\Cref{prop:wcvxf_me}, we obtain the desired result.

{\rm(ii)} (a) $\Longrightarrow$ (e): Since (a) $\Longrightarrow$ (d) in statement (i) of this proposition, we have that
\[
\begin{aligned}
\dist^2(x,\overline{\setX})\leq\dist^2(x,\setX^*)&\overset{\rm(d)}{\leq}\kappa_{\tiny gQG}^{\varphi}[\varphi(x)-\varphi^*]\\
&\overset{\rm(a)}{\leq}\kappa_{\tiny gKL}^{\varphi}\kappa_{\tiny gQG}^{\varphi}\dist^2(0,\partial\varphi(x)+\mathcal{N}_{\setX}(x)),\;\forall x\in\setX,
\end{aligned}
\]
which shows $\partial\varphi$ is global metric subregularity with $\kappa_{\tiny gMS}^{\varphi}=\sqrt{\kappa_{\tiny gKL}^{\varphi}\kappa_{\tiny gQG}^{\varphi}}$.

{\rm(iii)} (e) $\Longleftrightarrow$ (f): See~\cite[Lemma 4.2, Corollary 4.1]{zhao2022randomized}.
\end{proof}

\begin{remark} \label{rem2.1} 
{\rm Proposition~\ref{prop:global_eb} shows that the global KL condition implies the uniform KL condition.}
\end{remark}
\begin{remark} {\rm From the definition of global KL property for $\varphi$, every stationary point $x$ ($0\in\partial\varphi(x)+\mathcal{N}_{\setX}(x)$) implies it is a global minimizer, i.e., $\varphi$ is a nonsmooth invex function~\cite{invex}.}
\end{remark}

\section{Subgradient upper bounding conditions}\label{sec:subgrad_upp}
In this section, we shall introduce a number of upper bounding conditions for the 
subgradient in order 
to guarantee the convergence of Prox-SubGrad without assuming Lipschitz continuity of the objective function $\varphi(x)$. In the following, three groups of upper bounding conditions are presented. The first group is partial Lipschitz subgradient upper bounds (see~\Cref{def:1}).
In particular, the 
subgradient upper bound (B$_{\mbox{\tiny {D-D}}}$) was used in~\cite{davis2019stochastic}, and (B$_1$) was used in~\cite{shor2012minimization, CohenZ, Li2023subgrad}.
\begin{definition}[Partial Lipschitz subgradient upper bounds]\label{def:1} {\ }
{\rm
\begin{itemize}
\item[{\rm (B$_{\mbox{\tiny {D-D}}}$).}] {\rm($f$-Lipschitz SUB)} $\partial\varphi$ is said to have $f$-Lipschitz subgradient upper bound if for any $x\in\setX$, $\forall g(x)\in \partial f(x)$, we have $\|g(x)\|\leq L_f$ ($L_f$ being a Lipschitz constant for $f$).
\item[{\rm (B$_1$).}] {\rm($r$-Lipschitz SUB)} $\partial\varphi$ is said to have $r$-Lipschitz subgradient upper bound if for any $x\in\setX$, $\forall h(x)\in \partial r(x)$, we have $\|h(x)\|\leq L_r$ ($L_r$ being a Lipschitz constant for $r$).
\end{itemize}
}
\end{definition}
The following basic subgradient upper bound is
also useful in our convergence analysis. 
Remark that the subgradient upper bound (B$_2$) was used in~\cite{CohenZ}.

\begin{definition}[Non-Lipschitz subgradient upper bounds]\label{def:2} {\ }
{\rm
\begin{itemize}
\item[{\rm (B$_2$).}] {\rm(Linear bounded SUB)} $\partial\varphi$ is said to have a linear upper bound for its subgradient if there are $c_1,\, c_2\geq0$ such that for any $x\in\setX$ and $\forall g(x)\in \partial f(x)$ and $\forall h(x)\in \partial r(x)$ it holds that $\|g(x)+h(x)\|\leq c_1\|x\|+c_2$.
\end{itemize}
}
\end{definition}
The following upper bounds on the subgradients of the {\it weak}\/ convex function turn out to be very useful as well:

\begin{definition} [New non-Lipschitz subgradient upper bounds]\label{def:3} {\ }

{\rm
\begin{itemize}
\item[{\rm (B$_3$).}] {\rm($[\varphi_{\lambda}(x)-\varphi^*]$-SUB)} $\partial\varphi$ is said to have $[\varphi_{\lambda}(x)-\varphi^*]$-subgradient upper bound if there are $c_3,\,  c_4\geq0$ such that for any $x\in\setX$ and $\forall g(x)\in \partial f(x)$ and $\forall h(x)\in \partial r(x)$ it holds that $\|g(x)+h(x)\|\leq\sqrt{c_3[\varphi_{\lambda}(x)-\varphi^*]+c_4}$.
\item[{\rm (B$_4$).}] {\rm($\|\nabla\varphi_{\lambda}(x)\|$-SUB)} $\partial\varphi$ is said to have $\|\nabla\varphi_{\lambda}(x)\|$-subgradient upper bound if there are $c_5,\, c_{6}\geq0$ such that for any $x\in\setX$ and $\forall g(x)\in \partial f(x)$ and $\forall h(x)\in \partial r(x)$ it holds that $\|g(x)+h(x)\|\leq\sqrt{c_5\|\nabla\varphi_{\lambda}(x)\|^2+c_{6}}$.
\item[{\rm (B$_5$).}]{\rm($\|\nabla\varphi_{\lambda}(x)\|$-exponent} SUB) $\partial\varphi$ is said to have $\|\nabla\varphi_{\lambda}(x)\|$-exponent subgradient upper bound if there are $c_{7}\geq0$ and $\theta\in[1,2)$ such that for any $x\in\setX$ and $\forall g(x)\in \partial f(x)$ it holds that $\|g(x)\|\leq c_7\|\nabla\varphi_{\lambda}(x)\|^{\theta}$.
\item[{\rm (B$_6$).}]{\rm($\mathcal{T}_{\gamma}$-SUB)} The norm of any subgradient is bounded on a neighborhood of $\setX^*$,  $\mathcal{T}_{\gamma}:=\{x\in\setX\mid \dist(x,\setX^*)<\gamma\}$ with some $\gamma>0$. That is, there exists $c_8>0$ such that
\(
\|g(x)+h(x)\|\leq c_8
\)
holds for all $x\in\mathcal{T}_{\gamma}$ and $\forall g(x)\in\partial f(x)$ and $h(x)\in\partial r(x)$.
\end{itemize}
}
\end{definition}

\begin{remark}
{\rm We noted that {\rm (B$_6$)} does not mean the sets $\setX^*$ and $\mathcal{T}_{\gamma}$ are both bounded.}

\begin{example}[Example that satisfies {\rm (B$_6$)} with unbounded $\setX^*$ and $\mathcal{T}_{\gamma}$]\label{exp:0}
{\rm
\[
\min_{x\in\R}\varphi(x)=\left\{\begin{array}{ll}
3(x+1)^2, &x\le -1.5 \\
-(x-k)^2-4(x-k)-3, &-1.5+k<x\leq-1+k, k=0,1,2,...\\
-(x-k)^2+1, &-1+k<x\leq-0.5+k, k=0,1,2,...
\end{array}\right.
\]
It is easy to check that $\setX^*=\{-1,0,1,...\}$ and $f^*=0$. The sets $\setX^*$ and $\mathcal{T}_{\gamma}$ are both unbounded, and $\varphi$ in this example satisfies {\rm (B$_6$)}.}

\begin{figure}
\centering
\begin{center}
\scriptsize
		\tikzstyle{format}=[rectangle,draw,thin,fill=white]
		\tikzstyle{test}=[diamond,aspect=2,draw,thin]
		\tikzstyle{point}=[coordinate,on grid,]
\begin{tikzpicture}[>=stealth]
 \draw[->](-3,0)--(3,0)node[below]{$x$};
 \draw[->](0,-1)--(0,4)node[right]{$f(x)$};
 \node at(-0.2,-0.2){$O$};
 \draw [domain=-1:-0.5,red] plot(\x,-\x*\x+1)node[right]{$f(x)=-(x-k)^2+1$};
 \draw [domain=-1.5:-1,red] plot(\x,-\x*\x-4*\x-3)node[left, yshift=10]{$f(x)=-(x-k)^2-4(x-k)-3$};
 \draw [domain=-2:-1.5,blue] plot(\x,3*\x*\x+6*\x+3)node[left, yshift=40]{$f(x)=3(x+1)^2$};
 \draw [domain=0:0.5,red] plot(\x,-\x*\x+2*\x);
  \draw [domain=-0.5:0,red] plot(\x,-\x*\x-2*\x);
  \draw [domain=1:1.5,red] plot(\x,-\x*\x+4*\x-3);
  \draw [domain=0.5:1,red] plot(\x,-\x*\x+1);
  \draw [domain=2:2.5,red] plot(\x,-\x*\x+6*\x-8);
  \draw [domain=1.5:2,red] plot(\x,-\x*\x+2*\x);
\end{tikzpicture}
\caption{
The function in~\Cref{exp:0}}\label{fig:0}
\end{center}
\end{figure}
\end{example}
\end{remark}
In the following, we also present 
two subgradient upper bounds (B$_{\mbox{\tiny {A-D}}}$) and (B$_{\mbox{\tiny {G}}}$) initially proposed in~\cite{asi2019importance} and~\cite{grimmer2018} respectively.
\begin{definition}{\ }
{\rm
\begin{itemize}
\item[{\rm (B$_{\mbox{\tiny {A-D}}}$).}] {\rm($\dist(x,\setX^*)$-SUB)} $\partial\varphi$ is said to have $\dist(x,\setX^*)$-subgradient upper bound if there are $c_9,\, c_{10}\geq0$ such that for all $x\in\setX$ and $\forall g(x)\in \partial f(x)$ and $\forall h(x)\in \partial r(x)$ it holds that $\|g(x)+h(x)\|\leq c_9\dist(x,\setX^*)+c_{10}$.
\item[{\rm (B$_{\mbox{\tiny {G}}}$).}] {\rm($[\varphi(x)-\varphi^*]$-SUB)}  $\partial\varphi$ is said to have $[\varphi(x)-\varphi^*]$-subgradient upper bound if there are $c_{11},\, c_{12}\geq0$ such that for all $x\in\setX$ and $\forall g(x)\in \partial f(x)$ and $\forall h(x)\in \partial r(x)$ it holds that $\|g(x)+h(x)\|\leq\sqrt{c_{11}[\varphi(x)-\varphi^*]+c_{12}}$.
\end{itemize}
}
\end{definition}

\subsection{Relationship among subgradient upper bounds}
When $f$ is $\rho$-weakly convex and $r$ is $\eta$-weakly convex (thus $\varphi=f+r$ is ($\rho+\eta$)-weakly convex), we have the following relationship 
for the subgradient upper bounds (B$_2$) and (B$_5$):
\begin{proposition}[Relationships for the subgradient upper bounds in the weakly convex case]\label{prop:upperbound2} Suppose $\varphi=f+r$ is ($\rho+\eta$)-weakly convex. Then,
\begin{enumerate}
\item[{\rm (i)}] $\mbox{{\rm(B$_4$)} $\Longrightarrow$ {\rm(B$_3$)} $\Longrightarrow$ {\rm(B$_2$)}.}$
\item[{\rm (ii)}] $\mbox{{\rm(B$_4$)} with $c_6=0 \Longrightarrow$ {\rm(B$_5$)} with $\theta=1$.}$
\end{enumerate}
\end{proposition}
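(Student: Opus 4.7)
The plan is to derive everything in (i) from a single \emph{envelope--gap identity} obtained from the definition of the Moreau envelope, and then to dispose of (ii) as an essentially trivial specialization of (B$_4$). Throughout, I would fix $\lambda<1/(\rho+\eta)$ so that all the machinery of Proposition~\ref{prop:wcvxf_me} (single-valuedness and Lipschitz continuity of $\prox_{\lambda,\varphi}$, the gradient formula, etc.) is available for $\varphi=f+r$.

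The core ingredient for (i) is the identity
\[
\varphi_\lambda(x) \;=\; \varphi(\prox_{\lambda,\varphi}(x)) + \frac{1}{2\lambda}\|x-\prox_{\lambda,\varphi}(x)\|^2 \;=\; \varphi(\prox_{\lambda,\varphi}(x)) + \frac{\lambda}{2}\|\nabla\varphi_\lambda(x)\|^2,
\]
which combines the definition of $\prox_{\lambda,\varphi}$ with the gradient formula $\nabla\varphi_\lambda(x)=\frac{1}{\lambda}(x-\prox_{\lambda,\varphi}(x))$ from Proposition~\ref{prop:wcvxf_me}(iv). Since $\varphi(\prox_{\lambda,\varphi}(x))\geq \varphi^*$, this immediately yields the ``envelope--gap bound''
\[
\|\nabla\varphi_\lambda(x)\|^2 \;\leq\; \tfrac{2}{\lambda}\bigl(\varphi_\lambda(x)-\varphi^*\bigr).
\]
Substituting this into (B$_4$) gives the implication (B$_4$)$\Longrightarrow$(B$_3$) with explicit constants $c_3=2c_5/\lambda$ and $c_4=c_6$.

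For (B$_3$)$\Longrightarrow$(B$_2$), I would go the other way and upper-bound $\varphi_\lambda(x)-\varphi^*$ by a quadratic in $\|x\|$. Picking any $x^*\in\setX^*$ and testing the Moreau minimization at $y=x^*$ yields $\varphi_\lambda(x)\leq \varphi^*+\frac{1}{2\lambda}\|x-x^*\|^2$; bounding $\|x-x^*\|^2\leq 2\|x\|^2+2\|x^*\|^2$ and plugging into (B$_3$), followed by the elementary inequality $\sqrt{a+b}\leq\sqrt{a}+\sqrt{b}$ for nonnegative $a,b$, produces a bound of the form $\|g(x)+h(x)\|\leq c_1\|x\|+c_2$ with $c_1=\sqrt{c_3/\lambda}$ and $c_2=\sqrt{(c_3/\lambda)\|x^*\|^2+c_4}$, which is (B$_2$).

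For (ii), observing that with $c_6=0$ the condition (B$_4$) reads $\|g(x)+h(x)\|\leq \sqrt{c_5}\,\|\nabla\varphi_\lambda(x)\|$ for every $g(x)\in\partial f(x)$ and $h(x)\in\partial r(x)$, which matches the form of (B$_5$) with exponent $\theta=1$ and constant $c_7=\sqrt{c_5}$. No real obstacles arise here; the only routine checks are that the envelope--gap bound extends to $(\rho+\eta)$-weakly convex $\varphi$ for $\lambda<1/(\rho+\eta)$, and that the constants stay nonnegative under the stated sign assumptions on $c_3,\dots,c_6$. The most delicate point, conceptually, is keeping track of which constants depend on $\lambda$ (hence on the weak-convexity modulus of the composite $\varphi$) and which do not, since the downstream complexity results in the paper will feed these constants into the Moreau envelope recursion.
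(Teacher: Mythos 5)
Your proposal is correct and follows essentially the same route as the paper: the envelope--gap bound $\|\nabla\varphi_\lambda(x)\|^2\le\frac{2}{\lambda}(\varphi_\lambda(x)-\varphi^*)$ for (B$_4$)$\Rightarrow$(B$_3$), testing the Moreau minimization at $x^*$ to get $\varphi_\lambda(x)-\varphi^*\le\frac{1}{2\lambda}\|x-x^*\|^2$ for (B$_3$)$\Rightarrow$(B$_2$), and dismissing (ii) as immediate. Your only additions are cosmetic --- making the $\sqrt{a+b}\le\sqrt{a}+\sqrt{b}$ step explicit where the paper leaves the passage from a squared bound to (B$_2$) implicit.
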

\begin{proof}
{\rm(i)} {\rm (B$_4$) $\Longrightarrow$ (B$_3$):} By statement (iv) of~\Cref{prop:wcvxf_me},~\Cref{cor:me} and the definition of $\varphi_{\lambda}$, for any $x\in\setX$, we have
\begin{center}
$
\begin{aligned}
\|\nabla\varphi_{\lambda}(x)\|^2 =& \frac{1}{\lambda^2}\|x-\prox_{\lambda,\varphi}(x)\|^2 \\
\leq & \frac{2}{\lambda}\left[\varphi(\prox_{\lambda,\varphi}(x))-\varphi^*+\frac{1}{2\lambda}\|x-\prox_{\lambda,\varphi}(x)\|^2\right] \\
= & \frac{2}{\lambda}\left[\varphi_{\lambda}(x)-\varphi^*\right],
\end{aligned}
$
\end{center}
obtaining the desired result.

{\rm (B$_3$) $\Longrightarrow$ (B$_2$):} By the definition of $\varphi_{\lambda}$ and given $x^*\in\setX^*$, for any $x\in\setX$, we have $\varphi_{\lambda}(x)\leq\varphi^*+\frac{1}{2\lambda}\|x-x^*\|^2$. Thus, (B$_3$) yields that
\begin{center}
$
\|g(x)+h(x)\|^2 \leq  c_3[\varphi_{\lambda}(x)-\varphi^*]+c_4
               \leq \frac{c_3}{2\lambda}\|x-x^*\|^2+c_4
               \leq \frac{c_3}{\lambda}\|x\|^2+\frac{c_3\|x^*\|^2}{\lambda}+c_4,
$
\end{center}
implying (B$_2$) to hold.

{\rm (ii)} Trivial.
\end{proof}

For the subgradient upper bounds, a practical issue is: {\it How can one check if a subgradient upper bound holds for one specific function or not?} We observe that, although upper bound (B$_2$) cannot be directly used in convergence analysis for the weakly convex case, in many cases it may be relatively easy to verify. 
Furthermore, we also found that under some condition, upper bound (B$_2$) implies (B$_3$) or (B$_4$); see the next proposition. This relationship can be used to check (B$_3$) or (B$_4$); see also Application 3, Proposition 4.1, and Fact 6.1 of~\cite{zhao2022randomized}. Below let us formally introduce the growth bounds in terms of the distances to the solution sets.

\begin{proposition}\label{prop:upperbound3} Suppose $\varphi=f+r$ is ($\rho+\eta$)-weakly convex. Then, the following assertions hold true:
\begin{itemize}
\item[{\rm (i)}] If $\varphi$ satisfies the global quadratic growth property with parameter $\kappa_{gQG}^{\varphi}>0$ and the set of critical points $\bsetX$ is bounded by constant $\sB_{\bsetX}>0$, then we have {\rm(B$_2$)} $\Longrightarrow$ {\rm(B$_3$)}. In other words, the upper bounding conditions {\rm(B$_2$)} and {\rm(B$_3$)} are equivalent in this case.
\item[{\rm (ii)}] If $\partial\varphi$ satisfies the global metric subregularity property with parameter $\kappa_{gMS}^{\varphi}>0$,
and the set of critical points $\bsetX$ is bounded by constant $\sB_{\bsetX}>0$, then we have {\rm(B$_2$)} $\Longrightarrow$ {\rm(B$_4$)}. In other words, the upper bounding conditions {\rm(B$_2$)} and {\rm(B$_4$)} are also equivalent in this case.
\item[{\rm (iii)}] If there are $\tau$ ($0<\tau<1$) and $R_0>0$ such that $
\|\prox_{\lambda,\varphi}(x)\|\leq \tau \|x\|,\quad\forall x\in\{x\in\setX\mid\|x\|\geq R_0\}$, then {\rm(B$_2$)} $\Longrightarrow$ {\rm(B$_4$)}. In other words, the upper bounding conditions {\rm(B$_2$)} and {\rm(B$_4$)} are again equivalent in this case.
\end{itemize}
\end{proposition}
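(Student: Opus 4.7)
The plan is to handle all three parts of \Cref{prop:upperbound3} by the same strategic idea: the only obstacle to turning (B$_2$) into (B$_3$) or (B$_4$) is the factor $\|x\|$ on the right-hand side, so in each part I will use the assumption to bound $\|x\|^2$ either by $[\varphi_{\lambda}(x)-\varphi^*]$ (to deliver (B$_3$)) or by $\|\nabla\varphi_{\lambda}(x)\|^2$ (to deliver (B$_4$)), plus a harmless additive constant. In every case the starting point is the elementary inequality $\|x\|^2\le 2\dist^2(x,\setX^*)+2\sB_{\bsetX}^2$ (or its variant), together with the relations between $x$, $\prox_{\lambda,\varphi}(x)$ and the Moreau envelope from \Cref{prop:wcvxf_me} and \Cref{cor:me}.

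For part (i), the key step is to bootstrap the global quadratic growth of $\varphi$ into one for $\varphi_{\lambda}$. Concretely, first apply gQG to the point $\prox_{\lambda,\varphi}(x)\in\setX$ to get
$\dist^2(\prox_{\lambda,\varphi}(x),\setX^*)\le \kappa_{gQG}^{\varphi}\bigl[\varphi(\prox_{\lambda,\varphi}(x))-\varphi^*\bigr]\le \kappa_{gQG}^{\varphi}\bigl[\varphi_{\lambda}(x)-\varphi^*\bigr]$,
and note separately that $\|x-\prox_{\lambda,\varphi}(x)\|^2\le 2\lambda[\varphi_{\lambda}(x)-\varphi^*]$ by the definition of the envelope and $\varphi(\prox_{\lambda,\varphi}(x))\ge \varphi^*$. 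A triangle inequality then gives $\dist^2(x,\setX^*)\le(4\lambda+2\kappa_{gQG}^{\varphi})[\varphi_{\lambda}(x)-\varphi^*]$. Combined with $\|x\|^2\le 2\dist^2(x,\setX^*)+2\sB_{\bsetX}^2$ and (B$_2$) in the squared form $\|g(x)+h(x)\|^2\le 2c_1^2\|x\|^2+2c_2^2$, this yields (B$_3$) with explicit $c_3,c_4$. The bootstrap step is the delicate one; everything else is book-keeping.

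For part (ii), I would use \Cref{prop:global_eb}(iii) to pass from gMS for $\partial\varphi$ to gMS for $\nabla\varphi_{\lambda}$, giving $\dist(x,\bsetX)\le \kappa_{gMS}^{\varphi_{\lambda}}\|\nabla\varphi_{\lambda}(x)\|$. Then $\|x\|\le \dist(x,\bsetX)+\sB_{\bsetX}\le \kappa_{gMS}^{\varphi_{\lambda}}\|\nabla\varphi_{\lambda}(x)\|+\sB_{\bsetX}$, square both sides and plug into (B$_2$) to obtain (B$_4$).

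For part (iii), I would directly exploit $\nabla\varphi_{\lambda}(x)=\frac{1}{\lambda}(x-\prox_{\lambda,\varphi}(x))$ from \Cref{prop:wcvxf_me}(iv). For $\|x\|\ge R_0$, the contraction assumption gives $\|x-\prox_{\lambda,\varphi}(x)\|\ge(1-\tau)\|x\|$, so $\|x\|\le \tfrac{\lambda}{1-\tau}\|\nabla\varphi_{\lambda}(x)\|$; for $\|x\|<R_0$ the bound $\|x\|\le R_0$ is free. Combining the two cases as $\|x\|^2\le R_0^2+\tfrac{\lambda^2}{(1-\tau)^2}\|\nabla\varphi_{\lambda}(x)\|^2$ and substituting into (B$_2$) gives (B$_4$). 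I expect the main conceptual obstacle to be the bootstrap in part (i); parts (ii) and (iii) reduce to one-line manipulations once the right tool (gMS transfer, contraction of the proximal map) is identified.
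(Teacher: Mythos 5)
Your proposal is correct and follows essentially the same route as the paper in all three parts: bound $\|x\|$ by a distance to the critical/optimal set plus the constant $\sB_{\bsetX}$, then convert that distance into $[\varphi_{\lambda}(x)-\varphi^*]$ via the quadratic-growth bootstrap through $\prox_{\lambda,\varphi}(x)$ in part (i), into $\|\nabla\varphi_{\lambda}(x)\|$ via the gMS transfer of \Cref{prop:global_eb}(iii) in part (ii), and via the contraction of the proximal map in part (iii). The only item to add is the converse implications behind the ``equivalence'' claims, which, exactly as in the paper, follow immediately from \Cref{prop:upperbound2}.
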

\begin{proof}
{\rm (i)} By (B$_2$), one easily obtains that
\be\label{eq:as1_as2_1}
\|g(x)+h(x)\|\leq c_1\|x\|+c_2\leq c_1\dist(x,\bsetX)+c_1\|\proj_{\bsetX}(x)\|+c_2, \quad \forall x \in\setX.
\ee
Since the set off critical points $\bsetX$ is bounded by constant $\sB_{\bsetX}>0$,~\eqref{eq:as1_as2_1} yields that
\be\label{eq:as1_as2_2}
\|g(x)+h(x)\|\leq c_1\dist(x,\bsetX)+c_1\sB_{\bsetX}+c_2\leq c_1\dist(x,\bsetX)+c_2',
\ee
with $c_2'=c_1\sB_{\bsetX}+c_2$. Since $\varphi$ satisfies the global quadratic growth property with parameter $\kappa_{gQG}^{\varphi}>0$, we have
\be\label{eq:as1_as2_3_35}
\dist^2(\prox_{\lambda,\varphi}(x),\setX^*)\leq\kappa_{gQG}^{\varphi}[\varphi(\prox_{\lambda,\varphi}(x))-\varphi^*],\qquad\forall x\in\setX.
\ee
By the fact that $\setX^*\subseteq\overline{\setX}$, it follows from~\eqref{eq:as1_as2_3_35} that
\be\label{eq:as1_as2_3_36}
\dist^2(\prox_{\lambda,\varphi}(x),\overline{\setX})\leq\kappa_{gQG}^{\varphi}[\varphi(\prox_{\lambda,\varphi}(x))-\varphi^*],\qquad\forall x\in\setX.
\ee
In combination with
\begin{center}
$
\begin{aligned}
\dist^2(x,\overline{\setX})&\leq\|x-\proj_{\overline{\setX}}(\prox_{\lambda,\varphi}(x))\|^2\\
&\leq2\|x-\prox_{\lambda,\varphi}(x)\|^2+2\dist^2(\prox_{\lambda,\varphi}(x),\overline{\setX}),
\end{aligned}
$
\end{center}
it follows from~\eqref{eq:as1_as2_3_36} that
\begin{center}
$
\begin{aligned}
\dist^2(x,\overline{\setX})&\leq2\kappa_{gQG}^{\varphi}[\varphi(\prox_{\lambda,\varphi}(x))-\varphi^*]+2\|x-\prox_{\lambda,\varphi}(x)\|^2\\
&\leq2\max\{\kappa_{gQG}^{\varphi},2\lambda\}[\varphi(\prox_{\lambda,\varphi}(x))-\varphi^*+\frac{1}{2\lambda}\|x-\prox_{\lambda,\varphi}(x)\|^2]\\
&=2\max\{\kappa_{gQG}^{\varphi},2\lambda\}[\varphi_{\lambda}(x)-\varphi^*].
\end{aligned}
$
\end{center}
Furthermore, it follows from~\eqref{eq:as1_as2_2} that
\be\label{eq:as1_as2_4_35}
\|g(x)+h(x)\|\leq c_1\sqrt{2\max\{\kappa_{gQG}^{\varphi},2\lambda\}[\varphi_{\lambda}(x)-\varphi^*]}+c_2', \quad \forall x \in\setX,
\ee
which yields (B$_3$). Combining with~\Cref{prop:upperbound2}, the desired result follows.

\noindent{\rm(ii)} Since $\partial\varphi$ satisfies the global metric subregularity property with parameter $\kappa_{gMS}^{\varphi}>0$, 
by~\Cref{prop:global_eb}, we have that $\nabla\varphi_{\lambda}$ satisfies the global metric subregularity with constant $\kappa_{gMS}^{\varphi_{\lambda}}>0$:
\be\label{eq:as1_as2_3}
\dist(x,\overline{\setX})\leq\kappa_{gMS}^{\varphi_{\lambda}}\|\nabla\varphi_{\lambda}(x)\|,\qquad\forall x\in\setX.
\ee
Again using~\eqref{eq:as1_as2_2}, we have that
\be\label{eq:as1_as2_4}
\|g(x)+h(x)\|\leq c_1\kappa_{gMS}^{\varphi_{\lambda}}\|\nabla\varphi_{\lambda}(x)\|+c_2', \quad \forall x\in\setX,
\ee
which yields (B$_4$). Combining with~\Cref{prop:upperbound2}, the desired result follows.

\noindent{\rm(iii)} Since $\|x\|\leq\|x-\prox_{\lambda,\varphi}(x)\|+\|\prox_{\lambda,\varphi}(x)\|$, by the conditions we have  $\|x\|\leq\frac{\lambda}{1-\tau}\|\nabla\varphi_{\lambda}(x)\|$. If (B$_2$) holds, we have $\|g(x)+h(x)\|\leq c_1\|x\|+c_2\leq\frac{\lambda c_1}{1-\tau}\|\nabla\varphi_{\lambda}(x)\|+c_2$, $\forall x\in\{x\in\setX \mid \|x\|\geq R_0\}$, which implies that upper bound (B$_4$) holds. Combining with~\Cref{prop:upperbound2}, the desired result thus follows.
\end{proof}
In the following proposition, we shall establish the relationship among conditions (B$_{\mbox{\tiny {A-D}}}$), (B$_{\mbox{\tiny {G}}}$), (B$_2$), (B$_3$) and (B$_6$).
\begin{proposition}\label{prop:bad}
Suppose $\varphi=f+r$ is $(\rho+\eta)$-weakly convex and $\varphi_{\lambda}$ is the Moreau envelope of $\varphi$ with $\lambda<\frac{1}{\rho+\eta}$. Then, we have
\begin{itemize}
\item[{\rm (i)}] {\rm(B$_{\mbox{\tiny G}}$)} $\Longleftrightarrow$ {\rm(B$_3$)}.
\item[{\rm (ii)}] {\rm(B$_{\mbox{\tiny {A-D}}}$)} $\Longrightarrow$ {\rm(B$_2$)}. Conversely, if the optimal solution set $\setX^*$ is bounded, then {\rm (B$_2$)} $\Longrightarrow$ {\rm(B$_{\mbox{\tiny {A-D}}}$)}.
\item[{\rm (iii)}] {\rm(B$_{\mbox{\tiny {A-D}}}$)} $\Longrightarrow$ {\rm(B$_6$)}.
\item[{\rm (iv)}] {\rm(B$_{\mbox{\tiny G}}$)} $\Longrightarrow$ {\rm(B$_{\mbox{\tiny {A-D}}}$)} with $c_9=\sqrt{c_{11}(\rho+\eta+c_{11})}$ and $c_{10}=\sqrt{2c_{12}}$. Conversely, if $\varphi$ satisfies the global quadratic growth property with parameter $\kappa_{gQG}^{\varphi}>0$, i.e., $\dist^2(x,\setX^*)\leq\kappa_{gQG}^{\varphi}[\varphi(x)-\varphi^*]$, $\forall x\in\R^d$, then we have {\rm(B$_{\mbox{\tiny {A-D}}}$)} $\Longrightarrow$ {\rm(B$_{\mbox{\tiny{G}}}$)}.
\end{itemize}
\end{proposition}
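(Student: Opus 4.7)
The plan is to prove the four assertions separately; (ii) and (iii) follow from elementary distance manipulations, while (i) and (iv) require leveraging the weak-convexity inequality. For (iii), $x\in\mathcal{T}_{\gamma}$ means $\dist(x,\setX^*)<\gamma$, so (B$_{\mbox{\tiny {A-D}}}$) directly gives $\|g(x)+h(x)\|\leq c_9\gamma+c_{10}$, yielding (B$_6$) with $c_8:=c_9\gamma+c_{10}$. For (ii), the forward direction uses $\dist(x,\setX^*)\leq\|x\|+\|x^*\|$ for any fixed $x^*\in\setX^*$ to convert (B$_{\mbox{\tiny {A-D}}}$) into (B$_2$) with $c_1=c_9$ and $c_2=c_9\|x^*\|+c_{10}$; when $\setX^*$ is bounded by $M$, the reverse inequality $\|x\|\leq\dist(x,\setX^*)+M$ yields (B$_2$) $\Rightarrow$ (B$_{\mbox{\tiny {A-D}}}$) symmetrically.

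For (i), the direction (B$_3$) $\Rightarrow$ (B$_{\mbox{\tiny {G}}}$) is immediate from $\varphi_{\lambda}(x)\leq\varphi(x)$, which is obtained by taking $y=x$ in the defining minimization of $\varphi_{\lambda}$. For the converse I would apply weak convexity at $x$ against $y=\prox_{\lambda,\varphi}(x)$, combined with the identity $\varphi(y)=\varphi_{\lambda}(x)-\tfrac{1}{2\lambda}\|x-y\|^2$, to arrive at
\[
\varphi(x)-\varphi_{\lambda}(x)\;\leq\;\langle\nu,x-y\rangle-\frac{1-\lambda(\rho+\eta)}{2\lambda}\|x-y\|^2,
\]
where $\nu=g(x)+h(x)\in\partial\varphi(x)$. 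Maximizing the right-hand side in $\|x-y\|$ gives the key estimate $\varphi(x)-\varphi_{\lambda}(x)\leq\tfrac{\lambda}{2(1-\lambda(\rho+\eta))}\|\nu\|^2$. Substituting this into (B$_{\mbox{\tiny {G}}}$), absorbing the $\|\nu\|^2$ contribution to the left-hand side, and rearranging produces (B$_3$) with constants depending on $c_{11}$ and $\lambda$; the main technical issue to watch is that the coefficient of $\|\nu\|^2$ after absorption must remain positive, which constrains the admissible range of $\lambda$.

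For (iv), weak convexity at $x$ against $x^*=\proj_{\setX^*}(x)$ gives
\[
\varphi(x)-\varphi^*\;\leq\;\|\nu\|\dist(x,\setX^*)+\frac{\rho+\eta}{2}\dist^2(x,\setX^*).
\]
Plugging this into (B$_{\mbox{\tiny {G}}}$) and invoking Young's inequality $c_{11}\|\nu\|\dist(x,\setX^*)\leq\tfrac{1}{2}\|\nu\|^2+\tfrac{c_{11}^2}{2}\dist^2(x,\setX^*)$ to absorb the cross term yields $\|\nu\|^2\leq c_{11}(\rho+\eta+c_{11})\dist^2(x,\setX^*)+2c_{12}$; the inequality $\sqrt{a+b}\leq\sqrt{a}+\sqrt{b}$ then matches the advertised constants $c_9=\sqrt{c_{11}(\rho+\eta+c_{11})}$ and $c_{10}=\sqrt{2c_{12}}$. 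For the converse, under gQG, I would square (B$_{\mbox{\tiny {A-D}}}$) via $(a+b)^2\leq 2a^2+2b^2$ and substitute $\dist^2(x,\setX^*)\leq\kappa_{gQG}^{\varphi}[\varphi(x)-\varphi^*]$ to land on (B$_{\mbox{\tiny {G}}}$) with $c_{11}=2c_9^2\kappa_{gQG}^{\varphi}$ and $c_{12}=2c_{10}^2$. The hardest bookkeeping will be choosing the Young split in (iv) so that the final constants come out exactly as stated, and in (i) tracking the $\lambda$-dependence carefully enough so that the absorption of $\|\nu\|^2$ remains valid.
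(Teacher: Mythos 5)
Your treatments of (ii), (iii), the forward implication of (i), and all of (iv) coincide with the paper's proof; in particular your Young split in (iv) with weight $c_{11}$ on the cross term is exactly the paper's computation and reproduces the stated constants $c_9=\sqrt{c_{11}(\rho+\eta+c_{11})}$, $c_{10}=\sqrt{2c_{12}}$, and the converse of (iv) via gQG is likewise routine and matches.

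The reverse implication of (i) is where your route genuinely diverges, and it contains the gap you flagged but did not close. After maximizing over $\|x-y\|$ you obtain $\varphi(x)-\varphi_{\lambda}(x)\leq\frac{\lambda}{2(1-\lambda(\rho+\eta))}\|\nu\|^2$, and substituting into (B$_{\mbox{\tiny G}}$) leaves the coefficient $\frac{c_{11}\lambda}{2(1-\lambda(\rho+\eta))}$ in front of $\|\nu\|^2$ on the right-hand side. Absorption requires this coefficient to be strictly less than $1$, i.e.\ $\lambda<\frac{2}{c_{11}+2(\rho+\eta)}$, which is strictly smaller than the hypothesis $\lambda<\frac{1}{\rho+\eta}$ whenever $c_{11}>0$; for $\lambda$ in the excluded range your argument yields nothing, so the proposition as stated is not proved. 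The paper avoids this by not converting the cross term entirely into $\|\nu\|^2$: it bounds $\langle\nu,x-\prox_{\lambda,\varphi}(x)\rangle\leq\frac{1}{2c_{11}}\|\nu\|^2+\frac{c_{11}}{2}\|x-\prox_{\lambda,\varphi}(x)\|^2$ with the Young parameter tied to $c_{11}$, and then controls $\|x-\prox_{\lambda,\varphi}(x)\|^2\leq2\lambda[\varphi_{\lambda}(x)-\varphi^*]$ (which follows from $\varphi(\prox_{\lambda,\varphi}(x))\geq\varphi^*$ and the definition of $\varphi_{\lambda}$). After multiplying through by $c_{11}$ the coefficient of $\|\nu\|^2$ is exactly $\tfrac12$ independently of $\lambda$, the quadratic term is folded into $(1+c_{11}\lambda)[\varphi_{\lambda}(x)-\varphi^*]$, and the absorption succeeds for every $\lambda<\frac{1}{\rho+\eta}$. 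Replacing your maximization step by this bookkeeping repairs the argument.
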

\begin{proof}
\noindent{\rm(i)} {\rm(B$_3$)}$\Longrightarrow${\rm(B$_{\mbox{\tiny G}}$)}. This statement directly follows statement (ii) of~\Cref{prop:wcvxf_me}.

{\rm(B$_{\mbox{\tiny G}}$)} $\Longrightarrow$ {\rm(B$_3$)}. 
For any $x\in\setX$, by definition of $\varphi_{\lambda}(x)$, $\prox_{\lambda,\varphi}(x)$, $g(x)\in\partial f(x)$,
$h(x)\in\partial r(x)$, $(\rho+\eta)$-weakly convexity of $\varphi$, and $\lambda<\frac{1}{\rho+\eta}$,
it follows that
\begin{center}
$
\begin{aligned}
 \varphi(x)-\varphi^*=&\varphi_{\lambda}(x)-\varphi^*+\varphi(x)-\left[\varphi(\prox_{\lambda,\varphi}(x))+\frac{1}{2\lambda}\|x-\prox_{\lambda,\varphi}(x)\|^2\right]\\
\leq&\varphi_{\lambda}(x)-\varphi^*+\langle g(x)+h(x),x-\prox_{\lambda,\varphi}(x)\rangle
 +\frac{1}{2}\left(\rho+\eta-\frac{1}{\lambda}\right)\|x-\prox_{\lambda,\varphi}(x)\|^2\\
\leq&\varphi_{\lambda}(x)-\varphi^*+\|g(x)+h(x)\|\cdot\|x-\prox_{\lambda,\varphi}(x)\|\\
\leq&\varphi_{\lambda}(x)-\varphi^*+\frac{1}{2c_{11}}\|g(x)+h(x)\|^2+\frac{c_{11}}{2}\|x-\prox_{\lambda,\varphi}(x)\|^2\\
\leq&(1+c_{11}\lambda)[\varphi_{\lambda}(x)-\varphi^*]+\frac{1}{2c_{11}}\|g(x)+h(x)\|^2.
\end{aligned}
$
\end{center}
Thus, by {\rm(B$_{\mbox{\tiny G}}$)} we have $\|g(x)+h(x)\|^2 \leq c_{11}[\varphi(x)-\varphi^*]+c_{12}
\leq c_{11}(1+c_{11}\lambda)[\varphi_{\lambda}(x)-\varphi^*]+\frac{1}{2}\|g(x)+h(x)\|^2+c_{12}$ leading to $\|g(x)+h(x)\|^2\leq2c_{11}(1+c_{11}\lambda)[\varphi_{\lambda}(x)-\varphi^*]+2c_{12}$, which shows that {\rm(B$_3$)} holds.

\noindent{\rm(ii)} {\rm(B$_{\mbox{\tiny {A-D}}}$)} $\Longrightarrow$ {\rm(B$_2$)}. For any $x\in\setX$ and given $x^*\in\setX^*$, {\rm(B$_{\mbox{\tiny {A-D}}}$)} yields that $
\|g(x)+h(x)\|\leq c_{9}\dist(x,\setX^*)+c_{10}\leq c_{9}\|x-x^*\|+c_{10}\leq c_9\|x\|+c_9\|x^*\|+c_{10}$.

Therefore, {\rm(B$_2$)} holds with $c_1=c_9$ and $c_2=c_9\|x^*\|+c_{10}$.

{\rm (B$_2$)} $\Longrightarrow$ {\rm(B$_{\mbox{\tiny {A-D}}}$)}. Let $x_p=\arg\min_{y\in\setX^*}\|y-x\|$. It follows from {\rm (B$_2$)} that
\begin{center}
$
\begin{aligned}
\|g(x)+h(x)\| \leq & c_1\|x\|+c_2\leq c_1\|x-x_p\|+c_1\|x_p\|+c_2\\
\leq&c_1\dist(x,\setX^*)+c_1\|x_p\|+c_2.
\end{aligned}
$
\end{center}
By the boundedness of $\setX^*$, there exists $c_{10}\geq0$ such that $c_{10}\geq c_1\|x_p\|+c_2$. Then {\rm(B$_{\mbox{\tiny {A-D}}}$)} holds.

\noindent{\rm(iii)} {\rm(B$_{\mbox{\tiny {A-D}}}$)} $\Longrightarrow$ {\rm(B$_6$)}. For any $x\in\setX$ and $\dist(x,\setX^*)<\gamma$ with given $\gamma>0$, by {\rm(B$_{\mbox{\tiny {A-D}}}$)} we have $\|g(x)+h(x)\|\leq c_9\dist(x,\setX^*)+c_{10}\leq c_9\gamma+c_{10}$. Then {\rm(B$_6$)} holds.

\noindent{\rm(iv)} {\rm(B$_{\mbox{\tiny\rm G}}$)} $\Longrightarrow$ {\rm(B$_{\mbox{\tiny {A-D}}}$)}. Let $x_p^*=\arg\min_{y\in\setX^*}\|y-x\|$. By the ($\rho+\eta$)-weakly convex of $\varphi$, we have
\begin{center}
$
\begin{aligned}
\varphi(x)-\varphi^*\leq&\langle g(x)+h(x),x-x_p^*\rangle+\frac{\rho+\eta}{2}\dist^2(x,\setX^*)\\
\leq&\frac{1}{2c_{11}}\|g(x)+h(x)\|^2+\frac{\rho+\eta+c_{11}}{2}\dist^2(x,\setX^*).
\end{aligned}
$
\end{center}
Thus, (B$_{\mbox{\tiny\rm G}}$) yields that
\begin{center}
$
\begin{aligned}
\|g(x)+h(x)\|^2\leq&c_{11}[\varphi(x)-\varphi^*]+c_{12}\\
\leq&\frac{1}{2}\|g(x)+h(x)\|^2+\frac{c_{11}(\rho+\eta+c_{11})}{2}\dist^2(x,\setX^*)+c_{12},
\end{aligned}
$
\end{center}
implying {\rm(B$_{\mbox{\tiny {A-D}}}$)} to hold.

{\rm(B$_{\mbox{\tiny {A-D}}}$)} $\Longrightarrow$ {\rm(B$_{\mbox{\tiny {G}}}$)}. By {\rm(B$_{\mbox{\tiny {A-D}}}$)}, one easily obtains that $\|g(x)+h(x)\|\leq c_9\dist(x,\setX^*)+c_{10}$, $\forall x \in\domain\varphi$. Since $\varphi$ satisfies the global quadratic growth property with parameter $\kappa_{gQG}^{\varphi}>0$, we have {\rm(B$_{\mbox{\tiny {G}}}$)} directly. 
\end{proof}
\begin{remark}
{\rm The subgradient upper bounds {\rm (B$_{\mbox{\tiny {D-D}}}$)}, {\rm(B$_1$)}, {\rm (B$_3$)}, and {\rm (B$_4$)} will be used in the convergence and $O(1/T^{\frac{1}{4}})$ rate analysis in~\Cref{subsect:converge}. The subgradient upper bound {\rm (B$_5$)} will be used in the extended convergence analysis in~\Cref{subsec:extended_rate}. Finally, the subgradient upper bounds {\rm (B$_6$)}, {\rm (B$_{\mbox{\tiny {A-D}}}$)}, and {\rm (B$_{\mbox{\tiny {G}}}$)} will be used in the linear convergence analysis of~\Cref{subsec:linear_converge}.
}
\end{remark}


To help easy referencing, we summarize below the relevant error bound conditions and subgradient upper bounds that have been introduced so far:

\begin{table}[!h]
\begin{center}
\caption{A glance of the error bound conditions }
{\small
\begin{tabular}{ccc}
\toprule
Name of the Condition & Function/Mapping & Bound \\
\toprule
KL with exponent $1/2$ & $\varphi$ & $\varphi(x)-
\bar{\varphi}
\leq\kappa_{\tiny{ KL}}^{\varphi} \dist^2(0,\partial \varphi(x)+\mathcal{N}_{\setX}(x))$ \\
KL with exponent $1/2$ & $\varphi_{\lambda}$ & $\varphi_{\lambda}(x)-
  \bar{\varphi}
  \leq\kappa_{\tiny KL}^{\varphi_{\lambda}} \|\nabla\varphi_{\lambda}(x)\|^2$ \\
Uniform KL with exponent $1/2$ & $\varphi_{\lambda}$ & $\varphi_{\lambda}(x)-\bar{\varphi}\leq\kappa_{\tiny uKL}^{\varphi_{\lambda}} \|\nabla\varphi_{\lambda}(x)\|^2,\, \forall x\in\mathfrak{B}(\bar{\varphi},\epsilon,\nu)$ \\
(gKL) & $\varphi$ &
$\varphi(x)-\varphi^*\leq\kappa_{\tiny gKL}^{\varphi} \dist^2(0,\partial \varphi(x)+\mathcal{N}_{\setX}(x))$ \\
(gKL) & $\varphi_{\lambda}$ &
$\varphi_{\lambda}(x)-\varphi^*\leq\kappa_{\tiny gKL}^{\varphi_{\lambda}}\|\nabla\varphi_{\lambda}(x)\|^2$ \\
(gQG) & $\varphi_{\lambda}$ & $\dist^2(x,\setX^*)\leq\kappa_{\tiny gQG}^{\varphi_{\lambda}}[\varphi_{\lambda}(x)-\varphi^*]$ \\
(gQG) & $\varphi$ & $\dist^2(x,\setX^*)\leq\kappa_{\tiny gQG}^{\varphi}[\varphi(x)-\varphi^*]$ \\
(gMS) & $\partial \varphi$ & $\dist(x,\overline{\setX})\leq\kappa_{\tiny gMS}^{\varphi}\dist(0,\partial\varphi(x)+\mathcal{N}_{\setX}(x))$ \\
(gMS) & $\nabla \varphi_{\lambda}$ & $\dist(x,\overline{\setX})\leq\kappa_{\tiny gMS}^{\varphi_{\lambda}}\|\nabla\varphi_{\lambda}(x)\|$ \\
\bottomrule
\end{tabular}
}
\end{center}
\end{table}
\begin{table}[!h]
\begin{center}
\caption{A glance of the subgradient upper bound conditions }
{\small
\begin{tabular}{lc}
\toprule
\qquad\qquad\; Name & Bound \\
\toprule
{\rm (B$_{\mbox{\tiny {D-D}}}$): $f$-Lipschitz SUB} &
$\|g(x)\|\leq L_f$\\
{\rm (B$_1$): $r$-Lipschitz SUB} &  $\|h(x)\|\leq L_r$\\
{\rm (B$_2$): linear bounded SUB} &  $\|g(x)+h(x)\|\leq c_1\|x\|+c_2$\\
{\rm (B$_3$): $[\varphi_{\lambda}(x)-\varphi^*]$-SUB} &
$\|g(x)+h(x)\|\leq\sqrt{c_3[\varphi_{\lambda}(x)-\varphi^*]+c_4}$\\
{\rm (B$_4$): $\|\nabla\varphi_{\lambda}(x)\|$-SUB} &
$\|g(x)+h(x)\|\leq\sqrt{c_5\|\nabla\varphi_{\lambda}(x)\|^2+c_{6}}$\\
{\rm (B$_5$): $\|\nabla\varphi_{\lambda}(x)\|$-exponent SUB}&
$\|g(x)\|\leq c_7\|\nabla\varphi_{\lambda}(x)\|^{\theta}, \theta\in[1,2)$\\
{\rm (B$_6$): $\mathcal{T}_{\gamma}$-SUB} &
$\|g(x)+h(x)\|\leq c_8$\\
{\rm (B$_{\mbox{\tiny {A-D}}}$): $\dist(x,\setX^*)$-SUB}& $\|g(x)+h(x)\|\leq c_9\dist(x,\setX^*)+c_{10}$\\
{\rm (B$_{\mbox{\tiny {G}}}$): $[\varphi(x)-\varphi^*]$-SUB}&
$\|g(x)+h(x)\|\leq\sqrt{c_{11}[\varphi(x)-\varphi^*]+c_{12}}$\\
\bottomrule
\end{tabular}
}
\end{center}
\end{table}

The relationships of the subgradient upper bound conditions discussed so far
can be
visualized in
\Cref{fig:2}.
\begin{figure}[h]
\centering
\begin{center}
\begin{tikzpicture}
[
>=latex,
node distance=5mm,
 ract/.style={draw=blue!50, fill=blue!5,rectangle,minimum size=6mm, very thick, font=\itshape, align=center},
 racc/.style={rectangle, align=center},
 ractm/.style={draw=red!100, fill=red!5,rectangle,minimum size=6mm, very thick, font=\itshape, align=center},
 cirl/.style={draw, fill=yellow!20,circle,   minimum size=6mm, very thick, font=\itshape, align=center},
 raco/.style={draw=green!500, fill=green!5,rectangle,rounded corners=2mm,  minimum size=6mm, very thick, font=\itshape, align=center},
 hv path/.style={to path={-| (\tikztotarget)}},
 vh path/.style={to path={|- (\tikztotarget)}},
 skip loop/.style={to path={-- ++(0,#1) -| (\tikztotarget)}},
 vskip loop/.style={to path={-- ++(#1,0) |- (\tikztotarget)}}]

\node (b3) [font=\Large] at ( 1,0)     {\normalsize(B$_2$)};

\node (b6) [font=\Large] at ( -3.5,0)    {\normalsize(B$_3$)$=$(B$_{\mbox{\rm\tiny {G}}}$)};
\node (b66) [font=\Large] at ( -1.1,0)    {\normalsize(B$_{\mbox{\rm\tiny {A-D}}}$)};
\node (b666) [font=\Large] at ( -1.1,1.2)    {\normalsize(B$_6$)};
\node (b44) [racc, below of = b6, xshift = 63, yshift=-17]{\scriptsize\bf{global QG $+$ $\overline{\setX}$ bounded}};
\node (b444) [racc, below of = b6, xshift = 38, yshift=5]{\scriptsize\bf{global QG}};
\node (b444) [racc, below of = b6, xshift = 102, yshift=6]{\scriptsize\bf{$\setX^*$ bounded}};
\node (b45) [racc, below of = b6, xshift =35, yshift=-38]{\scriptsize  \bf{global metric subregularity $+$ $\overline{\setX}$ bounded}};
\node (b7) [font=\Large] at ( -5.6,0)    {\normalsize(B$_4$)};
\node (b8) [font=\Large] at (-8.2,0)    {\normalsize(B$_5$) with $\theta=1$};
\node (b85) [racc, above of = b8, xshift =50, yshift=-9]{\scriptsize  \bf{$c_6=0$}};
\path (b7) edge[->] (b6);
\path (b66) edge[->] (b666);
\path (b7) edge[->] (b8);
\path (b6) edge[->] (b66);
\path (-1.75,-0.1) edge[->] (-2.6,-0.1);
\path (b66) edge[->] (b3);
\path (0.5,-0.1) edge[->] (-0.5,-0.1);
\path (b3) edge[-] (1,-1.25);
\path (b3) edge[-] (1,-2);
\path (1,-1.25) edge[->, hv path] (b6);
\path (1,-2) edge[->, hv path] (b7);
\end{tikzpicture}
\caption{Relationships among the subgradient upper bounds} 
\label{fig:2}
\end{center}
\end{figure}
\section{Convergence analysis} 
\label{sec:wcvx_wcvx}
Based on the subgradient upper bound conditions (B$_{\mbox{\tiny {D-D}}}$), (B$_1$) and (B$_3$) and (B$_4$), we shall now develop a unified convergence analysis whenever $\varphi$ is weakly convex.
\subsection{Uniform recursion of Prox-SubGrad}
At the beginning of this subsection, we establish the basic recursion of Prox-SubGrad for the weakly convex case. The technical proofs of this subsection are relegated to Appendix~\ref{appendix-c}.
\begin{lemma}[Basic recursion]\label{lemma:basic_recur_wcvx}
Suppose $f$ is $\rho$-weakly convex, $r$ is $\eta$-weakly convex, and the positive number $\lambda<\frac{1}{\rho+2\eta}$. Let $\{x^k\}_{k\in \N}$ be the sequence generated by Prox-SubGrad for solving problem~\eqref{eq:opt problem_prox}. Then, for any $x\in\setX$, the following inequalities hold:
\begin{itemize}
\item[{\rm(i)}] If $\alpha_k<\frac{1}{6\eta}$, it holds that
\begin{center}
$
\begin{aligned}
\|x-x^{k+1}\|^2\leq&\left(1-\frac{(1-\lambda(\rho+2\eta))\alpha_k}{\lambda}\right)\|x-x^k\|^2-2\alpha_k\left[\varphi(x^k)-\varphi(x)-\frac{1}{2\lambda}\|x-x^k\|^2\right] \\
 &        +2\alpha_k^2\|g(x^k)+h(x^k)\|^2.
\end{aligned}
$
\end{center}
\item[{\rm(ii)}] Similarly, if $\alpha_k<\frac{1}{2(\rho+2\eta+\frac{1}{\lambda})}$, it holds that
\begin{center}
$
\begin{aligned}
\|x-x^{k+1}\|^2 \leq&\left(1-\frac{(1-\lambda(\rho+2\eta))\alpha_k}{\lambda}\right)\|x-x^k\|^2-2\alpha_k\left[\varphi(x^{k+1})-\varphi(x)-\frac{1}{2\lambda}\|x-x^k\|^2\right] \\
 &        -\frac{\alpha_k}{\lambda}\|x^k-x^{k+1}\|^2+2\alpha_k^2\|g(x^k)-g(x^{k+1})\|^2.
\end{aligned}
$
\end{center}
\item[\rm(iii)] If $\alpha_k\leq\frac{1}{2\eta}$, we have $\|x^k-x^{k+1}\|\leq2\alpha_k\|g(x^k)+h(x^k)\|$.
\end{itemize}
\end{lemma}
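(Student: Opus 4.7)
The common starting point for all three parts is the first-order optimality condition of the subproblem defining $x^{k+1}$. Since $r$ is $\eta$-weakly convex and $\mathcal{I}_{\setX}$ is convex, the subproblem's objective is $(1/\alpha_k - \eta)$-strongly convex whenever $\alpha_k < 1/\eta$; comparing its value at an arbitrary $x \in \setX$ to its minimum at $x^{k+1}$ yields the three-point inequality
\[
(1 - \alpha_k\eta)\|x - x^{k+1}\|^2 \leq \|x - x^k\|^2 - \|x^{k+1} - x^k\|^2 + 2\alpha_k\langle g(x^k), x - x^{k+1}\rangle + 2\alpha_k[r(x) - r(x^{k+1})].
\]
One may equivalently work from the KKT identity $x^k - x^{k+1} = \alpha_k(g(x^k) + h(x^{k+1}) + n(x^{k+1}))$ with $h(x^{k+1}) \in \partial r(x^{k+1})$ and $n(x^{k+1}) \in \mathcal{N}_{\setX}(x^{k+1})$. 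This is the only algorithmic input; the rest is purely analytic.

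For part (i), I would split $\langle g(x^k), x - x^{k+1}\rangle = \langle g(x^k), x - x^k\rangle + \langle g(x^k), x^k - x^{k+1}\rangle$ and apply $\rho$-weak convexity of $f$ at $x^k$ to the first piece. To introduce the desired subgradient sum $g(x^k) + h(x^k)$ in the residual I would then invoke $\eta$-weak convexity of $r$ at $x^k$, writing $-r(x^{k+1}) \leq -r(x^k) + \langle h(x^k), x^k - x^{k+1}\rangle + (\eta/2)\|x^k - x^{k+1}\|^2$ for any $h(x^k) \in \partial r(x^k)$. Young's inequality with constant $1/2$ then bounds the resulting cross term $2\alpha_k\langle g(x^k) + h(x^k), x^k - x^{k+1}\rangle$ by $2\alpha_k^2\|g(x^k) + h(x^k)\|^2 + (1/2)\|x^k - x^{k+1}\|^2$. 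The crucial move is to absorb the leftover $\alpha_k\eta\|x - x^{k+1}\|^2$ through the crude expansion $\|x - x^{k+1}\|^2 \leq 2\|x - x^k\|^2 + 2\|x^k - x^{k+1}\|^2$: this supplies exactly the extra $2\alpha_k\eta\|x - x^k\|^2$ needed to promote $\rho$ to $\rho + 2\eta$ in the target, while the cumulative coefficient of $\|x^k - x^{k+1}\|^2$ reduces to $3\alpha_k\eta - 1/2$, which is nonpositive precisely when $\alpha_k < 1/(6\eta)$. Finally, the identity $1 + \alpha_k(\rho + 2\eta) = 1 - (1 - \lambda(\rho + 2\eta))\alpha_k/\lambda + \alpha_k/\lambda$ rewrites the bound in the stated form.

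Part (ii) follows the same scheme but replaces $-f(x^k)$ via $\rho$-weak convexity of $f$ at $x^{k+1}$, i.e.\ $-f(x^k) \leq -f(x^{k+1}) + \langle g(x^{k+1}), x^{k+1} - x^k\rangle + (\rho/2)\|x^k - x^{k+1}\|^2$ for some $g(x^{k+1}) \in \partial f(x^{k+1})$. The cross term now reshapes into $\langle g(x^{k+1}) - g(x^k), x^{k+1} - x^k\rangle$, which Young's inequality converts into $2\alpha_k^2\|g(x^{k+1}) - g(x^k)\|^2 + (1/2)\|x^k - x^{k+1}\|^2$; after the same $\|x - x^{k+1}\|^2$-splitting used in (i), the cumulative coefficient of $\|x^k - x^{k+1}\|^2$ becomes $\alpha_k(\rho + 2\eta) - 1/2$, and demanding this be at most $-\alpha_k/\lambda$ is exactly the hypothesis $\alpha_k < 1/(2(\rho + 2\eta + 1/\lambda))$. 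For part (iii), I would test the KKT identity against $x^k - x^{k+1}$, split through any $h(x^k) \in \partial r(x^k)$, and bound $\langle h(x^{k+1}) + n(x^{k+1}) - h(x^k), x^k - x^{k+1}\rangle \leq \eta\|x^k - x^{k+1}\|^2$ by the standard ``monotonicity up to $\eta$'' estimate for the subgradient of the $\eta$-weakly convex function $r + \mathcal{I}_{\setX}$; Cauchy--Schwarz on the remainder yields $(1 - \alpha_k\eta)\|x^k - x^{k+1}\| \leq \alpha_k\|g(x^k) + h(x^k)\|$, and $\alpha_k \leq 1/(2\eta)$ then gives $1 - \alpha_k\eta \geq 1/2$, finishing the proof.

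The principal obstacle throughout is arithmetic bookkeeping: each weak-convexity substitution must be made at the correct anchor point and each Young's constant tuned so that the cumulative coefficient of $\|x^k - x^{k+1}\|^2$ collapses to something nonpositive under the stated step-size restriction. The innocuous-looking splitting $\|x - x^{k+1}\|^2 \leq 2\|x - x^k\|^2 + 2\|x^k - x^{k+1}\|^2$ is what converts the unwanted $\alpha_k\eta\|x - x^{k+1}\|^2$ into the $\rho + 2\eta$ coefficient on $\|x - x^k\|^2$ promised by the lemma; without it, one would be forced either to retain a multiplicative factor $1/(1 - \alpha_k\eta)$ on the right-hand side or to impose the stronger restriction $\alpha_k \leq 1/(\rho + 2\eta)$, neither of which recovers the exact thresholds $1/(6\eta)$ and $1/(2(\rho + 2\eta + 1/\lambda))$ stated in the lemma.
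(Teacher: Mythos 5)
Your proposal is correct and, for parts (i) and (ii), follows essentially the same route as the paper: the same $(1/\alpha_k-\eta)$-strong-convexity three-point inequality, the same splitting $\|x-x^{k+1}\|^2\le 2\|x-x^k\|^2+2\|x^k-x^{k+1}\|^2$ to absorb the $\alpha_k\eta\|x-x^{k+1}\|^2$ term, the same anchor points for the weak-convexity substitutions, and the same Young constants, leading to the identical coefficients $3\alpha_k\eta-\tfrac12$ and $\alpha_k(\rho+2\eta)-\tfrac12$. For part (iii) you deviate slightly: the paper sets $x=x^k$ in the three-point inequality, arriving at $(2-3\alpha_k\eta)\|x^k-x^{k+1}\|\le 2\alpha_k\|g(x^k)+h(x^k)\|$, which under $\alpha_k\le \tfrac{1}{2\eta}$ only yields the constant $4$ rather than the stated $2$ (one would need $\alpha_k\le\tfrac{1}{3\eta}$ for the constant $2$ along that route); your derivation via the KKT identity and the $\eta$-hypomonotonicity of $\partial(r+\mathcal{I}_{\setX})$ gives $(1-\alpha_k\eta)\|x^k-x^{k+1}\|\le\alpha_k\|g(x^k)+h(x^k)\|$ and hence recovers the stated bound $2\alpha_k\|g(x^k)+h(x^k)\|$ exactly, which is a modest improvement over the paper's own computation.
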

\begin{lemma}[Recursion with $x=\hat{x}^k=\prox_{\lambda,\varphi}(x^k)$ and $x=x^*$]\label{lemma:basic_recur_wcvx_hx}
Suppose $f$ is $\rho$-weakly convex, $r$ is $\eta$-weakly convex and the positive number $\lambda<\frac{1}{\rho+2\eta}$. Let $\{x^k\}_{k\in \N}$ be the sequence generated by Prox-SubGrad for solving problem~\eqref{eq:opt problem_prox}.
\begin{itemize}
\item[{\rm(i)}]  If $\alpha_k<\frac{1}{6\eta}$, it holds that
\begin{center}
$\|\hat{x}^k-x^{k+1}\|^2
\leq\left(1-\frac{(1-\lambda(\rho+2\eta))\alpha_k}{\lambda}\right)\|\hat{x}^k-x^k\|^2+\underbrace{2\alpha_k^2\|g(x^k)+h(x^k)\|^2}_{T_1}$.
\end{center}
\item[{\rm(ii)}] Similarly, if $\alpha<\frac{1}{2(\rho+2\eta+\frac{1}{\lambda})}$, it holds that
\begin{center}
$\|\hat{x}^k-x^{k+1}\|^2 \leq\left(1-\frac{(1-\lambda(\rho+2\eta))\alpha_k}{\lambda}\right)\|\hat{x}^k-x^k\|^2+\underbrace{2\alpha_k^2\|g(x^k)-g(x^{k+1})\|^2}_{T_2}$.
\end{center}
\item[{\rm(iii)}]  If $\alpha_k<\frac{1}{6\eta}$, it holds that
\begin{equation}\label{eq:recur_linear}
\dist^2(x^{k+1},\setX^*)
\leq\left(1+(\rho+2\eta)\alpha_k\right)\dist^2(x^k,\setX^*)-2\alpha_k\left[\varphi(x^k)-\varphi^*\right]+\underbrace{2\alpha_k^2\|g(x^k)+h(x^k)\|^2}_{T_1}.
\end{equation}
\end{itemize}
\end{lemma}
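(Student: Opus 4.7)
The plan is to read off all three inequalities directly from the basic recursion in \Cref{lemma:basic_recur_wcvx} by choosing the reference point $x$ appropriately and then disposing of the objective-value term using the defining variational inequality of the proximal mapping. No new machinery is needed; the content of the statement is that the two specific choices $x=\hat x^k:=\prox_{\lambda,\varphi}(x^k)$ and $x=\proj_{\setX^*}(x^k)$ yield two structurally useful recursions for the convergence analysis to come.

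For (i), I would substitute $x=\hat x^k$ into \Cref{lemma:basic_recur_wcvx}(i). By the definition of the proximal mapping, $\varphi(\hat x^k)+\frac{1}{2\lambda}\|\hat x^k-x^k\|^2\leq\varphi(x^k)$, so the bracket $\varphi(x^k)-\varphi(\hat x^k)-\frac{1}{2\lambda}\|\hat x^k-x^k\|^2$ is nonnegative, and the term $-2\alpha_k[\cdot]$ may simply be dropped, leaving (i). For (iii), I would take $x=\bar x^k:=\proj_{\setX^*}(x^k)$ in \Cref{lemma:basic_recur_wcvx}(i) so that $\varphi(x)=\varphi^*$ and $\|x-x^k\|^2=\dist^2(x^k,\setX^*)$. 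This time the bracket is not dropped but absorbed: the two contributions to the coefficient of $\|\bar x^k-x^k\|^2$ collapse as
\[
1-\frac{(1-\lambda(\rho+2\eta))\alpha_k}{\lambda}+\frac{\alpha_k}{\lambda}=1+(\rho+2\eta)\alpha_k,
\]
and then $\dist^2(x^{k+1},\setX^*)\leq\|\bar x^k-x^{k+1}\|^2$ completes (iii).

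Part (ii) is where a small trick is needed, because after setting $x=\hat x^k$ in \Cref{lemma:basic_recur_wcvx}(ii) the surviving objective term becomes $\varphi(x^{k+1})-\varphi(\hat x^k)-\frac{1}{2\lambda}\|\hat x^k-x^k\|^2$, which is no longer automatically nonnegative. I would exploit the prox optimality of $\hat x^k$ once more, this time tested against the feasible point $x^{k+1}\in\setX$; this yields the lower bound $\varphi(x^{k+1})-\varphi(\hat x^k)-\frac{1}{2\lambda}\|\hat x^k-x^k\|^2\geq-\frac{1}{2\lambda}\|x^{k+1}-x^k\|^2$. Substituting turns $-2\alpha_k[\cdot]$ into at most $+\frac{\alpha_k}{\lambda}\|x^{k+1}-x^k\|^2$, which is then exactly canceled by the explicit $-\frac{\alpha_k}{\lambda}\|x^k-x^{k+1}\|^2$ term already carried by \Cref{lemma:basic_recur_wcvx}(ii), leaving (ii).

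I do not expect any step to be a real obstacle; the parent \Cref{lemma:basic_recur_wcvx} already contains the per-iteration work, and here the only subtlety is the double use of the prox inequality in (ii) and the arithmetic collapse of the $\|\bar x^k-x^k\|^2$ coefficient in (iii). The stepsize restrictions $\alpha_k<\frac{1}{6\eta}$ in (i) and (iii), and $\alpha_k<\frac{1}{2(\rho+2\eta+1/\lambda)}$ in (ii), are inherited unchanged from the corresponding parts of \Cref{lemma:basic_recur_wcvx}.
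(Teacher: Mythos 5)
Your proposal is correct and follows essentially the same route as the paper: part (i) drops the bracket via the prox optimality of $\hat x^k$ tested at $x^k$ (the paper's \Cref{lemma:phi_lambda_1} with $z=x^k$), part (ii) tests it at $x^{k+1}$ so that the resulting $+\frac{\alpha_k}{\lambda}\|x^k-x^{k+1}\|^2$ cancels the explicit negative term, and part (iii) takes $x=\proj_{\setX^*}(x^k)$ and collapses the coefficient to $1+(\rho+2\eta)\alpha_k$ exactly as in the appendix. No gaps.
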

The next lemma allows us to establish a unified Moreau envelope recursive relationship regarding $\varphi_{\lambda}(x^k)-\varphi^*$ for all subgradient upper bounding conditions (B$_{\mbox{\tiny {D-D}}}$), (B$_1$) and (B$_3$) and (B$_4$). Note that~\Cref{prop:upperbound3} shows that we need only to analyze upper bounding condtions (B$_{\mbox{\tiny {D-D}}}$), (B$_1$) and (B$_3$) for weakly convex case.

\begin{lemma}\label{lemma:profound_recur_wcvx}{\bf(Moreau envelope recursive relations 
under various subgradient upper bounding conditions)}
Suppose $f$ is $\rho$-weakly convex and $r$ is $\eta$-weakly convex. Let $\{x^k\}_{k\in \N}$ be the sequence generated by Prox-SubGrad for solving problem~\eqref{eq:opt problem_prox}, and the sequence $\{\beta_k\}$ is a $\sigma$-sequence, meaning $\beta_k>0$, $\sum_{k\in \N} \beta_k=\infty$, and $\sum_{k\in \N}\beta_k^2<\infty$. Then, for the subgradient upper bound scenarios {\rm(B$_{\mbox{\rm\tiny {D-D}}}$)}, {\rm(B$_1$)} and {\rm(B$_3$)}, we uniformly have the recursion with $\gamma_1$, $\gamma_2\geq0$ as
\be\label{eq:uniform_recur_wcvx}
\begin{aligned}
[\varphi_{\lambda}(x^{k+1})-\varphi^*]-(1+\gamma_1\beta_k^2)[\varphi_{\lambda}(x^k)-\varphi^*]\leq-\frac{(1-\lambda(\rho+2\eta))\alpha_k}{2}\|\nabla\varphi_{\lambda}(x^{k})\|^2+\gamma_2\beta_k^2.
\end{aligned}
\ee
\end{lemma}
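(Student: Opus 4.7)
The plan is to combine the Moreau-envelope comparison from Lemma~\ref{lemma:phi_lambda_2} with the iterate-centered recursion of Lemma~\ref{lemma:basic_recur_wcvx_hx}, and then specialize the resulting error term under each subgradient upper bounding scenario to produce a single recursion with constants $\gamma_1,\gamma_2$ adapted to the scenario.

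First, I will apply Lemma~\ref{lemma:phi_lambda_2} with $z=x^{k+1}$ and $x=x^k$ to write
\[
\varphi_\lambda(x^{k+1}) - \varphi_\lambda(x^k) \leq \frac{\|\hat{x}^k - x^{k+1}\|^2 - \|\hat{x}^k - x^k\|^2}{2\lambda},
\]
where $\hat{x}^k := \prox_{\lambda,\varphi}(x^k)$. Substituting the one-step bound from Lemma~\ref{lemma:basic_recur_wcvx_hx}(i) or (ii) for $\|\hat{x}^k - x^{k+1}\|^2$ and invoking the identity $\|\hat{x}^k - x^k\|^2 = \lambda^2\|\nabla\varphi_\lambda(x^k)\|^2$ from Proposition~\ref{prop:wcvxf_me}(iv), I obtain the baseline inequality
\[
\varphi_\lambda(x^{k+1}) - \varphi_\lambda(x^k) \leq -\frac{(1-\lambda(\rho+2\eta))\alpha_k}{2}\|\nabla\varphi_\lambda(x^k)\|^2 + \frac{T}{2\lambda},
\]
with $T=T_1=2\alpha_k^2\|g(x^k)+h(x^k)\|^2$ when part (i) is used and $T=T_2=2\alpha_k^2\|g(x^k)-g(x^{k+1})\|^2$ when part (ii) is used.

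Next, I will specialize $T$ to each scenario, choosing the step-size rule so that $\alpha_k$ is comparable to $\beta_k$ (for instance $\alpha_k=\beta_k$). Under (B$_{\mbox{\tiny {D-D}}}$), use part (ii) together with $\|g(x^k)-g(x^{k+1})\|^2 \leq 2(\|g(x^k)\|^2+\|g(x^{k+1})\|^2)\leq 4L_f^2$, yielding $\gamma_1=0$ and $\gamma_2=4L_f^2/\lambda$. Under (B$_1$), or more generally the combined Lipschitz regime where both $\|g\|$ and $\|h\|$ are bounded, use part (i) with $\|g(x^k)+h(x^k)\|^2\leq 2(L_f^2+L_r^2)$, again giving a constant $\gamma_2$ and $\gamma_1=0$. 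Under (B$_3$), use part (i) with $\|g(x^k)+h(x^k)\|^2\leq c_3[\varphi_\lambda(x^k)-\varphi^*]+c_4$, which produces both a $[\varphi_\lambda(x^k)-\varphi^*]$-proportional piece and a constant piece; the corresponding parameters are $\gamma_1=c_3/\lambda$ and $\gamma_2=c_4/\lambda$.

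Finally, I will subtract $\varphi^*$ from both sides of the baseline and move the $[\varphi_\lambda(x^k)-\varphi^*]$-proportional piece (which only appears under (B$_3$)) to the left-hand side as the factor $(1+\gamma_1\beta_k^2)[\varphi_\lambda(x^k)-\varphi^*]$, recovering the stated uniform recursion. The main obstacle I anticipate is step-size bookkeeping: each part of Lemma~\ref{lemma:basic_recur_wcvx_hx} requires its own ceiling on $\alpha_k$ (namely $\alpha_k<1/(6\eta)$ or $\alpha_k<1/(2(\rho+2\eta+1/\lambda))$), and I need a choice that simultaneously respects both ceilings and keeps the proportionality between $\alpha_k^2$ and $\beta_k^2$ uniform in $k$, so that the scenario-specific constants fold cleanly into $\gamma_1,\gamma_2$. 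A secondary subtlety is the (B$_1$)-only case: since (B$_1$) bounds $\|h\|$ but not $\|g\|$, controlling $g(x^k)$ inside $T_1$ may require invoking the first-order optimality of the proximal step, which relates $g(x^k)$ to $h(x^{k+1})$, an element of $\mathcal{N}_{\setX}(x^{k+1})$, and $(x^{k+1}-x^k)/\alpha_k$, and then bounding the last quantity through Lemma~\ref{lemma:basic_recur_wcvx}(iii).
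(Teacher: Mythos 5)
Your overall route is the same as the paper's: combine \Cref{lemma:phi_lambda_2} (with $z=x^{k+1}$, $x=x^k$) with \Cref{lemma:basic_recur_wcvx_hx}, use $\|\hat{x}^k-x^k\|=\lambda\|\nabla\varphi_\lambda(x^k)\|$, and then estimate $T_1$ or $T_2$ scenario by scenario; your treatment of (B$_{\mbox{\rm\tiny D-D}}$) (via part (ii) and $T_2\le 8L_f^2\alpha_k^2$) and of (B$_3$) (via part (i), yielding $\gamma_1=c_3/\lambda$, $\gamma_2=c_4/\lambda$) coincides with the paper's.

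The genuine gap is in the (B$_1$) case. You bound $\|g(x^k)+h(x^k)\|^2\le 2(L_f^2+L_r^2)$, but (B$_1$) only asserts $\|h(x)\|\le L_r$; there is no bound on $\|g\|$ unless (B$_{\mbox{\rm\tiny D-D}}$) is also assumed, which would collapse (B$_1$) into the fully Lipschitz regime and defeat the purpose of treating it separately. You do flag this subtlety, but the repair you sketch --- extracting a bound on $g(x^k)$ from the first-order optimality of the proximal step combined with \Cref{lemma:basic_recur_wcvx}(iii) --- is circular: optimality relates $g(x^k)$ to $h(x^{k+1})$, a normal-cone element, and $(x^{k+1}-x^k)/\alpha_k$, while part (iii) bounds $\|x^k-x^{k+1}\|$ by $2\alpha_k\|g(x^k)+h(x^k)\|$, so you end up estimating $\|g(x^k)\|$ in terms of itself. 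The paper's resolution is different and essential: for (B$_1$) one takes the \emph{normalized} stepsize $\alpha_k=\beta_k/(\|g(x^k)\|+L_r)$, so that
\[
T_1=2\alpha_k^2\|g(x^k)+h(x^k)\|^2\le 2\alpha_k^2\bigl(\|g(x^k)\|+L_r\bigr)^2=2\beta_k^2,
\]
with no bound on $\|g\|$ needed; the $\alpha_k$ multiplying $\|\nabla\varphi_\lambda(x^k)\|^2$ on the right of \eqref{eq:uniform_recur_wcvx} is then this normalized quantity rather than $\beta_k$ itself. This also answers your ``bookkeeping'' worry in the wrong direction: $\alpha_k$ need not be uniformly comparable to $\beta_k$; only $T_1$ needs to be of order $\beta_k^2$. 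With this replacement for (B$_1$), the rest of your argument goes through as written.
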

\begin{proof} For all subgradient upper bound scenario {\rm (B$_{\mbox{\tiny\rm D-D}})$}, {\rm (B$_{1}$)}, and {\rm (B$_{3}$)}, taking stepsize $\alpha_k$ as in the table below, the terms $T_1$ and $T_2$ as in~\Cref{lemma:basic_recur_wcvx_hx} can be estimated accordingly:

\smallskip

\begin{tabular}{cccc}
\toprule
SubGrad UB & Stepsize  &  $T_1$ & $T_2$\\
\midrule 
{\rm (B$_{\mbox{\tiny\rm D-D}}$)} & $\alpha_k=\beta_k<\frac{1}{2(\frac{1}{\lambda}+2\eta+\nu)}$ &   --     &   $T_2\leq 8L_f^2\alpha_k^2$\\
{\rm (B$_{1}$)} & $\alpha_k=\frac{\beta_k}{\|g(x^k)\|+L_r}$ & $T_1\leq2\beta_k^2$ & --\\
{\rm (B$_{3}$)} & $\alpha_k=\beta_k<\frac{1}{6\eta}$ & $T_1\leq 2\beta_k^2c_3[\varphi_{\lambda}(x^k)-\varphi^*]+2\beta_k^2c_4$& --\\
\bottomrule 
\end{tabular}

\smallskip

Finally, the desired result follows from the above table, statements (i) and (ii) of~\Cref{lemma:basic_recur_wcvx_hx}, and~\Cref{lemma:phi_lambda_2}.
\end{proof}
\subsection{The convergence analysis of Prox-SubGrad with $\sigma$-sequence stepsize for nonsmooth weakly convex optimization}\label{subsect:converge}
In this subsection, we will derive an $O(1/\sqrt{T})$ rate of convergence in terms $\|\varphi_{\lambda}(x^k)\|^2$ based on the subgradient upper bound conditions (B$_{\mbox{\tiny {D-D}}}$), (B$_3$) and (B$_4$). Furthermore, if the uniform KL property with exponent $1/2$ holds, then we obtain an $O(1/T)$ rate of iteration complexity bound.
\begin{theorem} 
[Convergence analysis with \{{\rm(B$_{\mbox{\rm\tiny {D-D}}}$)}, {\rm(B$_3$)}, {\rm(B$_4$)}\}]\label{theo:convergence_wcvx}
Suppose $f$ is $\rho$-weakly convex and $r$ is $\eta$-weakly convex with $\rho$, $\eta\geq0$. Let $\{x^k\}_{k\in \N}$ be the sequence generated by Prox-SubGrad for solving problem~\eqref{eq:opt problem_prox}. If one of subgradient upper bound conditions \{{\rm(B$_{\mbox{\rm\tiny {D-D}}}$)}, {\rm(B$_3$)}, {\rm(B$_4$)}\} holds, and  the sequence $\{\beta_k\}$ and the stepsizes $\{\alpha_k\}$ are selected as shown in~\Cref{lemma:profound_recur_wcvx}, and the parameter $\lambda<\frac{1}{\rho+2\eta}$, then the following assertions hold true:
\begin{enumerate}[label=\textup{\textrm{(\alph*)}},topsep=0pt,itemsep=0ex,partopsep=0ex]
\item[{\rm (i)}] $\min\limits_{0\leq k\leq T}\|\nabla\varphi_{\lambda}(x^k)\|^2\leq\left(\frac{2}{1-\lambda(\rho+2\eta)}\right)\frac{[\varphi_{\lambda}(x^{0})-\varphi^*]+\gamma_3\sum_{k=0}^{T}\beta_k^2}{\sum_{k=0}^{T}\alpha_k}$ with $\gamma_3\geq\gamma_1[\varphi_{\lambda}(x^k)-\varphi^*]+\gamma_2$. If constant stepsize $\beta_k=\Delta_1/\sqrt{T+1}$ with 
$\Delta_1 > 0$ ($k=0,1,...,T$) is used, then $\min\limits_{0\leq k\leq T}\|\nabla\varphi_{\lambda}(x^k)\|^2\leq R_1/\sqrt{T+1}$ for some $R_1>0$.
\item[{\rm (ii)}] For the subgradient upper bounds {\rm(B$_3$)} and {\rm(B$_4$)}, there exists positive number

$M_1=\max\left\{\sqrt{\frac{c_3\gamma_3}{\gamma_1}+c_4},\sqrt{\frac{4c_{5}\gamma_3^2}{\gamma_1^2\lambda^2}+c_6}\right\}>0$ such that $\|g(x^k)+h(x^k)\|\leq M_1$, and $\|x^k-x^{k+1}\|\leq 2M_1\alpha_k$, and $\lim_{k\rightarrow\infty}\|\nabla\varphi_{\lambda}(x^k)\|=0$.
\end{enumerate}
\end{theorem}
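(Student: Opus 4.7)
The plan is to unfold the uniform Moreau envelope recursion in \Cref{lemma:profound_recur_wcvx} and perform a telescoping argument. Writing $A_k := \varphi_\lambda(x^k) - \varphi^* \geq 0$ (nonnegative by \Cref{cor:me}), the recursion reads
\begin{equation*}
A_{k+1} \leq (1+\gamma_1\beta_k^2) A_k + \gamma_2 \beta_k^2 - \frac{(1-\lambda(\rho+2\eta))\alpha_k}{2}\|\nabla\varphi_\lambda(x^k)\|^2.
\end{equation*}
Note that, by \Cref{prop:upperbound2}(i), scenario {\rm(B$_4$)} is subsumed by {\rm(B$_3$)}, so one only needs to handle {\rm(B$_{\mbox{\rm\tiny D-D}}$)} and {\rm(B$_3$)} directly through the recursion. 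The first step is to establish that $\{A_k\}$ is uniformly bounded: discarding the nonpositive gradient term and iterating the multiplicative factor yields
\begin{equation*}
A_k \leq A_0 \prod_{j=0}^{k-1}(1+\gamma_1\beta_j^2) + \gamma_2 \sum_{j=0}^{k-1} \beta_j^2 \prod_{i=j+1}^{k-1}(1+\gamma_1\beta_i^2).
\end{equation*}
Since $\{\beta_k\}$ is a $\sigma$-sequence, the infinite product is bounded above by $\exp(\gamma_1 \sum_k \beta_k^2) < \infty$, so $\sup_k A_k \leq \bar A$ for some finite $\bar A$. Setting $\gamma_3 := \gamma_1 \bar A + \gamma_2$ then validates the stated hypothesis $\gamma_3 \geq \gamma_1 A_k + \gamma_2$ for every $k$.

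Given this, the recursion collapses to the additive form $A_{k+1}-A_k \leq \gamma_3 \beta_k^2 - \tfrac{(1-\lambda(\rho+2\eta))\alpha_k}{2}\|\nabla\varphi_\lambda(x^k)\|^2$. Telescoping from $k=0$ to $T$ and invoking $A_{T+1}\geq 0$ produces
\begin{equation*}
\frac{1-\lambda(\rho+2\eta)}{2}\sum_{k=0}^T \alpha_k\|\nabla\varphi_\lambda(x^k)\|^2 \leq A_0 + \gamma_3\sum_{k=0}^T \beta_k^2.
\end{equation*}
Dividing by $\sum_{k=0}^T \alpha_k$ and bounding the minimum by the weighted average gives the displayed inequality in (i). For the constant choice $\beta_k=\alpha_k=\Delta_1/\sqrt{T+1}$, the numerator remains $O(1)$ (since $\sum\beta_k^2=\Delta_1^2$) while $\sum\alpha_k = \Delta_1\sqrt{T+1}$, producing the claimed $O(1/\sqrt{T+1})$ rate.

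For part (ii), the uniform subgradient bound follows by substituting $A_k \leq (\gamma_3-\gamma_2)/\gamma_1$ into {\rm(B$_3$)} to obtain $\|g(x^k)+h(x^k)\|^2 \leq c_3\gamma_3/\gamma_1 + c_4$, and by combining the estimate $\|\nabla\varphi_\lambda(x^k)\|^2 \leq 2A_k/\lambda$ (extracted from the proof of \Cref{prop:upperbound2}(i)) with {\rm(B$_4$)} to produce the stated $M_1$. The iterate-difference estimate $\|x^k-x^{k+1}\| \leq 2M_1\alpha_k$ is then immediate from \Cref{lemma:basic_recur_wcvx}(iii). Finally, summability of $\alpha_k\|\nabla\varphi_\lambda(x^k)\|^2$ together with $\sum\alpha_k=\infty$ forces $\liminf_k\|\nabla\varphi_\lambda(x^k)\|=0$; combining this with the Lipschitz continuity of $\nabla\varphi_\lambda$ (\Cref{prop:wcvxf_me}(iv)) and the vanishing iterate gaps $\|x^{k+1}-x^k\|\leq 2M_1\alpha_k\to 0$ rules out oscillating peaks via the standard gap-index argument (between a would-be peak and a preceding small-gradient index, accumulated stepsizes are uniformly bounded below, contradicting the series summability), thereby yielding $\lim_k\|\nabla\varphi_\lambda(x^k)\|=0$.

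The main obstacle is the initial uniform boundedness of $\{A_k\}$: without it, neither $\gamma_3$ nor the constant $M_1$ in (ii) is well defined. The Gronwall-type closure through $\prod_k(1+\gamma_1\beta_k^2)<\infty$ is routine but must precede the telescoping step, because only after $\gamma_3$ has been identified does the recursion become a genuine descent inequality amenable to summation. Once past that hurdle, everything else reduces to standard telescoping and the classical gap-index trick for subgradient iterations.
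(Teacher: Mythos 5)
Your proposal is correct and follows essentially the same route as the paper: it establishes uniform boundedness of $\varphi_\lambda(x^k)-\varphi^*$ via a Gronwall-type product bound (the content of the paper's Lemma~\ref{lemma:Polyak_extend}), identifies $\gamma_3$, telescopes the resulting descent inequality, and then derives the limit in part (ii) from the summability of $\alpha_k\|\nabla\varphi_\lambda(x^k)\|^2$, the bound $\|x^k-x^{k+1}\|\le 2M_1\alpha_k$, and the Lipschitz continuity of $\nabla\varphi_\lambda$ --- your ``gap-index argument'' is precisely what the paper packages as Lemma~\ref{lemma:rcs}. The only difference is a cosmetic one in the constant inside $M_1$, which does not affect the validity of the argument.
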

\begin{proof}
\noindent{\rm(i)} By the recursive relation~\eqref{eq:uniform_recur_wcvx} and statement (i) of~\Cref{lemma:Polyak_extend}, we obtain $\varphi_{\lambda}(x^k)-\varphi^*\rightarrow\overline{\mathfrak{a}}\geq0$ which from definition of $\varphi_{\lambda}(x^k)$ shows the boundness of $\{\varphi(\hat{x}^k)\}$ and $\{\|x^k-\hat{x}^k\|\}$. Therefore, there is positive number $\gamma_3\geq\gamma_1[\varphi_{\lambda}(x^k)-\varphi^*]+\gamma_2$ such that
\[
[\varphi_{\lambda}(x^{k+1})-\varphi^*]-[\varphi_{\lambda}(x^k)-\varphi^*]\leq-\frac{(1-\lambda(\rho+2\eta))\alpha_k}{2}\|\nabla\varphi_{\lambda}(x^k)\|^2+\gamma_3\beta_k^2.
\]
Choosing $\alpha_k=\beta_k$ and $\beta_k=\Delta_1/\sqrt{T+1}$,
the result follows.


\noindent{\rm(ii)} By the definition of $\gamma_3$, for $k=0, 1,...$, we have $\varphi_{\lambda}(x^k)-\varphi^*\leq\frac{\gamma_3}{\gamma_1}$ and $\|\nabla\varphi_{\lambda}(x^k)\|\leq\frac{2\gamma_3}{\gamma_1\lambda}$, the subgradient upper bounds {\rm(B$_3$)} and {\rm(B$_4$)} imply that there exists positive number $M_1=\max\left\{\sqrt{\frac{c_3\gamma_3}{\gamma_1}+c_4},\sqrt{\frac{4c_{5}\gamma_3^2}{\gamma_1^2\lambda^2}+c_6}\right\}>0$ such that $\|g(x^k)+h(x^k)\|\leq M_1$. Then, (iii) of~\Cref{lemma:basic_recur_wcvx} yields $\|x^k-x^{k+1}\|\leq 2M_1\alpha_k$.


For the stepsize choice 
in~\Cref{lemma:profound_recur_wcvx},
the sequence $\{\alpha_k\}$ is also a $\sigma$-sequence: $\alpha_k>0$, $\sum_{k\in \N} \alpha_k=\infty$, and $\sum_{k\in \N}\alpha_k^2<\infty$. We can easily derive $\sum_{k\in\setN}\alpha_k\|\nabla\varphi_{\lambda}(x^k)\|^2<\infty$ by~\Cref{lemma:Polyak_extend}. By combining $\|x^k-x^{k+1}\|\leq 2M_1\alpha_k$ with~\Cref{lemma:rcs}, we have $\|\nabla\varphi_{\lambda}(x^k)\|\rightarrow 0$.
\end{proof}


We note here that the $O(1/\sqrt{T})$ rate in terms of $\|\nabla\varphi_{\lambda}(x^k)\|^2$, 
as claimed in~\Cref{theo:convergence_wcvx} is in fact an $O(T^{-1/4})$ convergence rate in terms $\|\nabla\varphi_{\lambda}(x^k)\|$ of~\cite{davis2019stochastic}. The following corollary exhibits the behavior that initially the convergence can be faster,
when $\min\limits_{0\leq k\leq T}\|\nabla\varphi_{\lambda}(x^k)\|^2$ is not too small.

\begin{corollary}\label{cor:convergence_wcvx}
Suppose the assumptions of~\Cref{theo:convergence_wcvx} hold. By {\rm (i)} of the above theorem, for any given positive non-decreasing function $G:\R_+\rightarrow\R_+$ satisfying $G(t)=t$ when $0\leq t<\delta$ and $G(t)>t$ when $t\geq\delta$ with $\delta>0$, we have $\min\limits_{0\leq k\leq T}\|\nabla\varphi_{\lambda}(x^k)\|^2\leq R_1/\sqrt{T+1}\leq G(R_1/\sqrt{T+1})$.
\end{corollary}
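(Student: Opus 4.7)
The plan is straightforward because the corollary is essentially a packaging statement: the first inequality is simply quoted from Theorem~\ref{theo:convergence_wcvx}(i), and the second inequality is a tautological consequence of the defining properties of $G$. So my proof will be two short steps.

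First, I would invoke Theorem~\ref{theo:convergence_wcvx}(i) directly with the constant stepsize choice $\beta_k = \Delta_1/\sqrt{T+1}$ specified there. This immediately yields the left-hand inequality
\[
\min_{0\leq k\leq T}\|\nabla\varphi_\lambda(x^k)\|^2 \leq \frac{R_1}{\sqrt{T+1}},
\]
for the same constant $R_1>0$ produced in that theorem; nothing new needs to be computed.

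Second, I would establish the inequality $t \leq G(t)$ for every $t \geq 0$ from the hypotheses on $G$. This is done by a trivial case split: if $0\leq t < \delta$ then $G(t)=t\geq t$ by assumption, while if $t\geq\delta$ then $G(t)>t\geq t$ again by assumption. Thus $G(t)\geq t$ for all $t\in\R_+$. Applying this with $t=R_1/\sqrt{T+1}$ (which is nonnegative) gives
\[
\frac{R_1}{\sqrt{T+1}}\leq G\!\left(\frac{R_1}{\sqrt{T+1}}\right),
\]
and chaining this with the bound from the first step completes the proof.

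There is no real obstacle here: the entire content is already inside Theorem~\ref{theo:convergence_wcvx}(i), and the second inequality is definitional. The only mild subtlety is being explicit that the hypothesis on $G$ (equality on $[0,\delta)$ and strict inequality on $[\delta,\infty)$) yields the weak inequality $G(t)\geq t$ globally; but that is immediate from the two-case argument above, so no technical machinery or auxiliary lemma is required.
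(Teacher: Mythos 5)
Your proposal is correct and matches the paper's (implicit) argument: the paper states this corollary without a separate proof precisely because, as you observe, the first inequality is Theorem~\ref{theo:convergence_wcvx}(i) verbatim and the second follows from the global bound $G(t)\geq t$ obtained by the two-case split on $t<\delta$ versus $t\geq\delta$. Nothing is missing.
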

For example, if $G(t)=\left\{\begin{array}{cc}\frac{1}{\delta}t^2,&t\geq\delta;\\ t,&0\leq t\leq\delta,
\end{array}\right.$ with $\delta\in(0,1)$, then the term $\min\limits_{0\leq k\leq T}\|\nabla\varphi_{\lambda}(x^k)\|^2$ exhibits an extended $\frac{1}{\delta}R_1^2/\varepsilon$ complexity in the range $[\delta,\infty)$.
Let $\varphi_{\lambda}(\bar{x})=\bar{\varphi}$ be one local minima of $\varphi$. In the following theorem, we consider the case where $\varphi_{\lambda}(x^k)$ converges to the local minima $\bar{\varphi}$. In that case, the uniform recursion~\eqref{eq:uniform_recur_wcvx} also leads to an $O(1/k)$ rate 
in terms of $\varphi_{\lambda}(x)-\bar{\varphi}$ under the uniform KL condition.
We refer the term $\varphi_{\lambda}(x)-\bar{\varphi}$ to be the local suboptimality gap of $\varphi_{\lambda}(x)$.
\begin{theorem}[Convergence rate under additional uniform KL condition]\label{cor:qg_wcvx} Suppose $f$ is $\rho$-weakly convex and $r$ is $\eta$-weakly convex with $\rho$, $\eta\geq0$. Suppose further the sequence $\{\varphi_{\lambda}(x^k)\}$ which is generated by Prox-SubGrad converges to $\bar{\varphi}$, and $\varphi_{\lambda}$ satisfies uniform KL property at $x\in\Omega$, i.e., there is $\kappa_{uKL}^{\varphi_{\lambda}}>0$ such that $\|\nabla\varphi_{\lambda}(x)\|^2\geq\kappa_{uKL}^{\varphi_{\lambda}}[\varphi_{\lambda}(x)-\bar{\varphi}]$, $\forall x\in\mathfrak{B}(\bar{\varphi},\epsilon,\nu)=\{x \mid \dist(x,\Omega)\leq\epsilon,\quad\bar{\varphi}<\varphi_{\lambda}(x)<\bar{\varphi}+\nu\}$ when $\Omega=\{x \mid \varphi_{\lambda}(x)=\bar{\varphi}\}$. Then, under any of the subgradient upper bound conditions \{{\rm(B$_{\mbox{\tiny {D-D}}}$)}, {\rm(B$_3$)}, {\rm(B$_4$)}\}, and if the diminishing stepsizes and $\lambda$ are selected as shown in~\Cref{lemma:profound_recur_wcvx} and $\beta_k=\frac{\Delta_2}{k}$ with sufficient large positive number $\Delta_2$, then the following convergence rate holds 
\[
\varphi_{\lambda}(x^k)-\bar{\varphi}=O\left(\frac{1}{k}\right) .
\]
\end{theorem}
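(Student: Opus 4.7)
The plan is to combine the Moreau-envelope recursion of Lemma~\ref{lemma:profound_recur_wcvx} with the uniform KL inequality to obtain a contractive recursion on the local suboptimality gap $a_k:=\varphi_\lambda(x^k)-\bar\varphi$, and then argue that the choice $\beta_k=\Delta_2/k$ with $\Delta_2$ sufficiently large is enough to close an induction showing $a_k=O(1/k)$.

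First, since $\{\varphi_\lambda(x^k)\}$ converges to $\bar\varphi$, the quantity $\varphi_\lambda(x^k)-\varphi^*$ is bounded above by some constant $\bar C_0$. Substituting into the uniform recursion~\eqref{eq:uniform_recur_wcvx} of Lemma~\ref{lemma:profound_recur_wcvx} and absorbing the bounded term $\gamma_1\beta_k^2[\varphi_\lambda(x^k)-\varphi^*]$ into the $\beta_k^2$ remainder, I can rewrite the recursion in terms of $a_k$:
\[
a_{k+1} \;\le\; a_k - \tfrac{(1-\lambda(\rho+2\eta))\alpha_k}{2}\,\|\nabla\varphi_\lambda(x^k)\|^2 + C\beta_k^2,
\]
for some constant $C>0$ depending on the subgradient upper bound scenario in use. (For (B$_{\mbox{\tiny D-D}}$) and (B$_1$) this is immediate; for (B$_3$) one uses the boundedness of $\varphi_\lambda(x^k)-\varphi^*$ to absorb the $c_3$-dependent term.) The shift from $\varphi^*$ to $\bar\varphi$ is legitimate because $\bar\varphi-\varphi^*\ge 0$ is constant and cancels between consecutive iterates.

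Second, I need the iterate $x^k$ to eventually belong to the uniform-KL neighborhood $\mathfrak{B}(\bar\varphi,\epsilon,\nu)$. The condition $\bar\varphi<\varphi_\lambda(x^k)<\bar\varphi+\nu$ holds for all $k$ large enough by the hypothesis $\varphi_\lambda(x^k)\to\bar\varphi$. For $\dist(x^k,\Omega)\le\epsilon$, I would combine (i) the continuity of $\varphi_\lambda$ and the definition of $\Omega=\{x\in\setX\mid\varphi_\lambda(x)=\bar\varphi\}$ to conclude that every limit point of $\{x^k\}$ lies in $\Omega$, with (ii) the estimate $\|x^{k+1}-x^k\|\le 2M_1\alpha_k\to 0$ from Theorem~\ref{theo:convergence_wcvx}(ii) to rule out oscillation across different components of $\Omega$. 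Applying the uniform KL bound $\|\nabla\varphi_\lambda(x^k)\|^2\ge\kappa_{uKL}^{\varphi_\lambda}\,a_k$ inside the recursion yields, for all $k\ge K_0$,
\[
a_{k+1} \;\le\; \bigl(1-c_0\alpha_k\bigr)\,a_k + C\beta_k^2, \qquad c_0:=\tfrac{(1-\lambda(\rho+2\eta))\kappa_{uKL}^{\varphi_\lambda}}{2}.
\]

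Third, with $\beta_k=\Delta_2/k$ and $\alpha_k$ chosen according to the table in Lemma~\ref{lemma:profound_recur_wcvx} (so that $\alpha_k\ge c'\beta_k$ for some constant $c'>0$, using the uniform bound on $\|g(x^k)+h(x^k)\|$ from Theorem~\ref{theo:convergence_wcvx}(ii) in the (B$_1$) case), the recursion becomes
\[
a_{k+1} \;\le\; \Bigl(1-\tfrac{c_0 c'\Delta_2}{k}\Bigr)\,a_k + \tfrac{C\Delta_2^2}{k^2}.
\]
I would close the argument by a standard induction: picking $\Delta_2$ large enough so that $c_0c'\Delta_2>1$, and $M>0$ large enough so that $M(c_0c'\Delta_2-1)\ge C\Delta_2^2$ and $a_{K_0}\le M/K_0$, one verifies $a_k\le M/k$ implies $a_{k+1}\le M/(k+1)$ for all $k\ge K_0$. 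This yields $\varphi_\lambda(x^k)-\bar\varphi=O(1/k)$.

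\textbf{Main obstacle.} The technical heart lies in step two: rigorously justifying that $x^k$ stays in $\mathfrak{B}(\bar\varphi,\epsilon,\nu)$ for all large $k$, rather than merely entering it. The $\dist(x^k,\Omega)\le\epsilon$ part needs the vanishing step sizes and the $\|x^{k+1}-x^k\|\to 0$ estimate to prevent the iterates from exiting through the spatial boundary once inside, since the uniform KL inequality is only assumed on $\mathfrak{B}$. Once this residence property is established, the recursive contraction and the $O(1/k)$ induction are routine.
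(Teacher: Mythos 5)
Your proposal is correct and follows essentially the same route as the paper's proof: absorb the $\gamma_1\beta_k^2[\varphi_\lambda(x^k)-\varphi^*]$ term using boundedness, apply the uniform KL inequality inside the Moreau-envelope recursion of Lemma~\ref{lemma:profound_recur_wcvx} to get the contraction $a_{k+1}\le(1-c_0\alpha_k)a_k+C\beta_k^2$, and choose $\Delta_2$ large enough to conclude $O(1/k)$ (the paper invokes Lemma~\ref{lemma:Polyak1} where you run the equivalent direct induction). Your explicit treatment of why the iterates eventually reside in $\mathfrak{B}(\bar\varphi,\epsilon,\nu)$ is in fact more careful than the paper, which simply asserts $\{x^k\}\subset\mathfrak{B}(\bar\varphi,\epsilon,\nu)$ for $k\ge k_0$ on the grounds that $\bar\varphi$ is a local minimum.
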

\begin{proof} Since $\varphi_{\lambda}(x^k)\rightarrow\bar{\varphi}$ we have $\gamma_3\geq\gamma_1[\varphi_{\lambda}(x^k)-\varphi^*]+\gamma_2$. Additionally, since $\bar{\varphi}$ is a local minimum, there is $k_0>0$ such that $\{x^k\}\subset\mathfrak{B}(\bar{\varphi},\epsilon,\eta)$ for some $\epsilon$, $\eta>0$ and $k\geq k_0$. Since $\varphi_{\lambda}$ satisfies the uniform KL property, the uniform recursion~\eqref{eq:uniform_recur_wcvx} yields
\[
[\varphi_{\lambda}(x^{k+1})-\bar{\varphi}]-\left(1-\frac{(1-\lambda(\rho+2\eta))\kappa_{uKL}^{\varphi_{\lambda}}\alpha_k}{2}\right)[\varphi_{\lambda}(x^k)-\bar{\varphi}]\leq\gamma_3\beta_k^2.
\]
Note that subgradient upper bound (B$_4$) implies (B$_3$), and for the subgradient upper bound scenarios (B$_{\mbox{\tiny {D-D}}}$), (B$_3$) and (B$_4$), letting $\beta_k=\frac{\Delta_2}{k}$ with $\Delta_2>\frac{4}{(1-\lambda(\rho+2\eta))\kappa_{uKL}^{\varphi_{\lambda}}}$, we obtain $\varphi_{\lambda}(x^{k+1})-\bar{\varphi}]-\left(1-\frac{2}{k}\right)[\varphi_{\lambda}(x^k)-\bar{\varphi}]\leq\frac{\gamma_3\Delta_2^2}{k^2}$. Then, by~\Cref{lemma:Polyak1} we obtain the desired result.
\end{proof}
\begin{remark}
{\rm We noted that the condition $\varphi_{\lambda}(x^k)\rightarrow\bar{\varphi}$ of~\Cref{cor:qg_wcvx} is easy to prove, when $\varphi$ is level-bounded on $\setX$.}
\end{remark}

\begin{remark}\label{rem:B1} {\rm With the level-boundedness in hands, we can easily get an $O(1/\sqrt{T})$ rate as in~\Cref{theo:convergence_wcvx} and an $O(1/k)$ rate as in~\Cref{cor:qg_wcvx} under scenario {\rm (B$_1$)}. }
\end{remark}

\subsection{The extended convergence analysis of Proj-SubGrad with normalized stepsize for nonsmooth weakly convex optimization}\label{subsec:extended_rate}
In this subsection, we consider  problem~\eqref{eq:opt problem_prox} with $r=0$:
\begin{equation}\label{eq:opt problem_proj}
\min_{x\in \setX} \ 
f(x),
\end{equation}
where $f$ is $\rho$-weakly convex. In other words,
$\varphi=f$ in this subsection,
and the algorithm Prox-SubGrad with $\alpha_k=\frac{\beta_k}{\|g(x^k)\|}$ becomes the following Proj-SubGrad scheme:
\[
x^{k+1}:=\arg\min_{x\in\setX} \, \langle g(x^k),x\rangle+\frac{1}{2\alpha_k}\|x-x^k\|^2\quad\mbox{or}\quad x^{k+1}:=\proj_{\setX}\left(x^k-\frac{\beta_k}{\|g(x^k)\|}g(x^k)\right).
\]
We note that, for problem~\eqref{eq:opt problem_proj} and Proj-SubGrad with normalized stepsize $\alpha_k=\frac{\beta_k}{\|g(x^k)\|}$, when $f$ is convex,~\cite{Li2023subgrad} establishs the $O(1/\sqrt{T})$ rate without any other assumptions. Moreover,~\cite{grimmer2019} provides an extended rate under an additional function upper bound condition.
When $f$ is weakly convex,~\cite{Li2023subgrad} shows the $O(1/\sqrt{T})$ rate in terms $\|\nabla\varphi_{\lambda}(x^k)\|^2$ by using additional level boundedness of $f$.
However, it turns out that it is possible to derive a faster rate of convergence, if some additional subgradient upper bounding conditions hold. In the following theorem, we introduce such a result under the subgradient upper bound condition $\|\nabla\varphi_{\lambda}(x)\|$-exponent SUB {\rm(B$_5$)}.

\begin{theorem} 
[Extended convergence analysis for Proj-SubGrad with normalized stepsize]\label{theo:convergence_wcvx_norm}
Suppose $f$ is $\rho$-weakly convex ($\rho\geq0$) and $r=0$. Let $\{x^k\}_{k\in \N}$ be the sequence generated by Proj-SubGrad for solving problem~\eqref{eq:opt problem_proj}. If the subgradient upper bound condition {\rm(B$_5$)} holds, and $\beta_k=\Delta_3/\sqrt{T+1}$ with $\Delta_3>0$, $T>k$ and the parameter $\lambda<\frac{1}{\rho}$, then:
\[
\min\limits_{0\leq k\leq T}\|\nabla\varphi_{\lambda}(x^k)\|\leq\left(\frac{R_2}{\sqrt{T+1}}\right)^{\frac{1}{2-\theta}},\quad\mbox{with}\quad\theta\in[1,2)
\]
where $R_2>0$ is a constant.
\end{theorem}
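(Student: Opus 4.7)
The plan is to run the standard Moreau envelope descent analysis for Prox-SubGrad (with $r=0$, so $\eta=0$), specialized to the normalized stepsize $\alpha_k=\beta_k/\|g(x^k)\|$, and then exploit condition (B$_5$) to convert a $\|\nabla\varphi_\lambda(x^k)\|^2/\|g(x^k)\|$ factor into $\|\nabla\varphi_\lambda(x^k)\|^{2-\theta}$. First I would invoke \Cref{lemma:basic_recur_wcvx_hx}(i) with $\eta=0$ at $x=\hat x^k=\prox_{\lambda,\varphi}(x^k)$, giving
\[
\|\hat x^k-x^{k+1}\|^2\leq\Bigl(1-\tfrac{(1-\lambda\rho)\alpha_k}{\lambda}\Bigr)\|\hat x^k-x^k\|^2+2\alpha_k^2\|g(x^k)\|^2.
\]
The normalized stepsize makes the noise term collapse: $2\alpha_k^2\|g(x^k)\|^2=2\beta_k^2$. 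In the contraction term, substitute $\alpha_k=\beta_k/\|g(x^k)\|$ and the identity $\|\hat x^k-x^k\|=\lambda\|\nabla\varphi_\lambda(x^k)\|$ (from \Cref{prop:wcvxf_me}(iv)) to obtain
\[
\|\hat x^k-x^{k+1}\|^2\leq\|\hat x^k-x^k\|^2-\frac{(1-\lambda\rho)\lambda\beta_k\|\nabla\varphi_\lambda(x^k)\|^2}{\|g(x^k)\|}+2\beta_k^2.
\]

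Next I would apply (B$_5$), $\|g(x^k)\|\leq c_7\|\nabla\varphi_\lambda(x^k)\|^\theta$, to replace $\|\nabla\varphi_\lambda(x^k)\|^2/\|g(x^k)\|$ by $\|\nabla\varphi_\lambda(x^k)\|^{2-\theta}/c_7$, which is the key step that the hypothesis $\theta\in[1,2)$ makes meaningful. Then \Cref{lemma:phi_lambda_2} with $z=x^{k+1}$ converts the squared-distance difference to a Moreau envelope gap:
\[
\varphi_\lambda(x^{k+1})-\varphi_\lambda(x^k)\leq\frac{\|\hat x^k-x^{k+1}\|^2-\|\hat x^k-x^k\|^2}{2\lambda}\leq-\frac{(1-\lambda\rho)\beta_k}{2c_7}\|\nabla\varphi_\lambda(x^k)\|^{2-\theta}+\frac{\beta_k^2}{\lambda}.
\]

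Now I would telescope from $k=0$ to $T$, using $\varphi_\lambda(x^{T+1})\geq\varphi^*$ from \Cref{cor:me}(i), to get
\[
\frac{(1-\lambda\rho)}{2c_7}\sum_{k=0}^{T}\beta_k\|\nabla\varphi_\lambda(x^k)\|^{2-\theta}\leq\varphi_\lambda(x^0)-\varphi^*+\frac{1}{\lambda}\sum_{k=0}^{T}\beta_k^2.
\]
Plugging in the constant choice $\beta_k=\Delta_3/\sqrt{T+1}$, the right-hand side stays bounded while the left-hand side equals $\tfrac{(1-\lambda\rho)\Delta_3\sqrt{T+1}}{2c_7}\cdot\tfrac{1}{T+1}\sum_{k=0}^T\|\nabla\varphi_\lambda(x^k)\|^{2-\theta}$. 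Bounding the average below by the minimum and solving yields
\[
\min_{0\leq k\leq T}\|\nabla\varphi_\lambda(x^k)\|^{2-\theta}\leq\frac{R_2}{\sqrt{T+1}},\qquad R_2=\frac{2c_7\bigl(\varphi_\lambda(x^0)-\varphi^*+\Delta_3^2/\lambda\bigr)}{(1-\lambda\rho)\Delta_3},
\]
from which taking the $(2-\theta)$-th root gives the claimed rate.

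The only delicate point is step size feasibility: \Cref{lemma:basic_recur_wcvx_hx}(i) requires $\alpha_k<1/(6\eta)$, which is vacuous here since $\eta=0$, and $\lambda<1/(\rho+2\eta)=1/\rho$ is exactly the hypothesis. The only genuine ``obstacle'' is conceptual rather than technical: one must see that normalizing the stepsize transfers the full nonsmoothness/growth burden onto $\|g(x^k)\|$ in the descent term, where (B$_5$) is precisely the right hypothesis to turn that denominator into a power of $\|\nabla\varphi_\lambda(x^k)\|$ strictly less than $2$, yielding a sublinear but nontrivial aggregate bound.
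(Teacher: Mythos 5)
Your proposal is correct and follows essentially the same route as the paper's proof: Lemma~\ref{lemma:basic_recur_wcvx_hx}(i) with $\eta=0$ and normalized stepsize, then Lemma~\ref{lemma:phi_lambda_2} to pass to the Moreau envelope gap, condition (B$_5$) to convert $\|\nabla\varphi_\lambda(x^k)\|^2/\|g(x^k)\|$ into $\|\nabla\varphi_\lambda(x^k)\|^{2-\theta}/c_7$, and a telescoping/minimum argument with the constant stepsize. The only (immaterial) difference is in the additive noise constant, where you obtain $\beta_k^2/\lambda$ while the paper writes $\beta_k^2/\lambda^2$; your bookkeeping is in fact the more careful one, and it only changes the explicit value of $R_2$.
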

\begin{proof}
The statement (i) of~\Cref{lemma:basic_recur_wcvx_hx} with $r=0$ and $\alpha_k=\frac{\beta_k}{\|g(x^k)\|}$ follows that
\be\label{eq:recur_hx_r0_1}
\|\hat{x}^k-x^{k+1}\|^2
\leq\left(1-\frac{(1-\lambda\rho)\frac{\beta_k}{\|g(x^k)\|}}{\lambda}\right)\|\hat{x}^k-x^k\|^2+2\beta_k^2.
\ee
By the combination of~\Cref{lemma:phi_lambda_2} and~\eqref{eq:recur_hx_r0_1} and statement (iv) of~\Cref{prop:wcvxf_me}, we have that
\be\label{eq:recur_hx_r0_3}
\begin{aligned}
\varphi_{\lambda}(x^{k+1})-\varphi_{\lambda}(x^k)\leq&-\frac{(1-\lambda\rho)\frac{\beta_k}{\|g(x^k)\|}}{2}\|\nabla\varphi_{\lambda}(x^k)\|^2+\frac{\beta_k^2}{\lambda^2}\\
\overset{\mbox{\small \rm(B$_5$)}}{\leq}
&-\frac{(1-\lambda\rho)\beta_k}{2c_7}\|\nabla\varphi_{\lambda}(x^k)\|^{2-\theta}+\frac{\beta_k^2}{\lambda^2}.
\end{aligned}
\ee
Unrolling this recursion~\eqref{eq:recur_hx_r0_3} for $k=0, 1, ..., T$, and rearranging the terms provide
\[
\sum_{k=0}^{T}\frac{(1-\lambda\rho)\beta_k}{2c_7}\|\nabla\varphi_{\lambda}(x^k)\|^{2-\theta}\leq\varphi_{\lambda}(x^0)-\varphi^*+\sum_{k=0}^T\frac{\beta_k^2}{\lambda^2}.
\]
Since $\theta\in[1,2)$, function $t^{2-\theta}$ is increasing when $t>0$. Then we have
\[
\left[\min_{0\leq k\leq T}\|\nabla\varphi_{\lambda}(x^k)\|\right]^{2-\theta}\leq\frac{\varphi_{\lambda}(x^0)-\varphi^*+\sum_{k=0}^T\frac{\beta_k^2}{\lambda^2}}{\sum_{k=0}^{T}\frac{(1-\lambda\rho)\beta_k}{2c_7}}.
\]
Let
$\beta_k=\frac{\Delta_3}{\sqrt{T+1}}$. We have $\min\limits_{0\leq k\leq T}\|\nabla\varphi_{\lambda}(x^k)\|\leq\left(\frac{2c_7[\varphi_{\lambda}(x^0)-\varphi^*+\frac{\Delta_3^2}{\lambda^2}]}{(1-\lambda\rho)\Delta_3\sqrt{T+1}}\right)^{\frac{1}{2-\theta}}$.
\end{proof}
We note that since $\theta\in[1,2)$,
~\Cref{theo:convergence_wcvx_norm} gives rise to a faster rate of convergence for Proj-SubGrad method.
\subsection{
Linear convergence}
\label{subsec:linear_converge}
Under some conditions,
Proj-SubGrad may even exhibit the behavior of linear convergence. This subsection is devoted to such an analysis.
Let us begin a new error bound condition as follows:

\begin{definition}{\ }
{\rm
 $\varphi$ is said to satisfy the $\mathcal{T}_{\gamma}$-sharpness condition
 if there are $\kappa_{\tiny\rm s}^{\varphi}>0$ and $\gamma>0$ such that $\dist(x,\setX^*)\leq\kappa_{\tiny\rm s}^{\varphi}[\varphi(x)-\varphi^*], \,\forall x\in\mathcal{T}_{\gamma} :=\{x\in\setX\mid \dist(x,\setX^*)<\gamma\}$.
}
\end{definition}

\begin{theorem}[Linear Convergence under additional condition]\label{cor:qg_wcvx_2}
Suppose $f$ is $\rho$-weakly convex and $r$ is $\eta$-weakly convex with $\rho$, $\eta\geq0$. Now, consider either of the following two conditions {\rm (i)} or {\rm (ii)} holds. Specifically,
\begin{itemize}
\item[{\rm(i)}] Suppose further $\varphi$ satisfies the global quadratic growth  property with $\kappa_{gQG}^{\varphi}>0$, 
and suppose that one of the following three conditions holds:

\noindent{\rm (1)} $\kappa_{gQG}^{\varphi}<\frac{2}{\rho+2\eta}$, and the subgradient upper bound condition {\rm(B$_{\mbox{\tiny {A-D}}}$)} holds with $c_{10}=0$, and we select the constant stepsize $\alpha_k=\alpha\in\left(0,\min\left\{\frac{2-\kappa_{gQG}^{\varphi}(\rho+2\eta)}{2c_9^2\kappa_{gQG}^{\varphi}},\frac{1}{6\eta}\right\}\right)$.

\noindent{\rm (2)} $\kappa_{gQG}^{\varphi}<\frac{2}{\rho+2\eta}$, and the subgradient upper bound condition {\rm(B$_{\mbox{\tiny {G}}}$)} holds with $c_{12}=0$, and we select
 the constant stepsize $\alpha_k=\alpha\in\left(0,\min\left\{\frac{2-(\rho+2\eta)}{2c_{11}(\rho+\eta+c_{11}){\kappa_{gQG}^{\varphi}}},\frac{1}{6\eta}\right\}\right)$.

\noindent{\rm (3)} If $\kappa_{gQG}^{\varphi}<\frac{1}{\rho}$, and $\eta=0$, and if we select the modified Polyak stepsize $\alpha_k=\frac{\varphi(x^k)-\varphi^*}{2\|g(x^k)+h(x^k)\|^2}$, and if $\varphi$ satisfies the  $\mathcal{T}_{\gamma}$-sharpness with constant $\kappa_{\tiny\rm s}^{\varphi}>0$
and {\rm(B$_6$)} holds on the tube $\mathcal{T}_{\gamma_0}:=
\{x\in\setX\mid \dist(x,\setX^*)<\dist(x^0,\setX^*)+1\}$.
\item[{\rm(ii)}] We have $\eta=0$, and $\varphi$ is $\mathcal{T}_{\gamma}$-sharpness with constant $\kappa_{\tiny\rm s}^{\varphi}>0$ on $\mathcal{T}_{\gamma}$ with $0<\gamma<\frac{\kappa_{\rm\tiny s}}{\rho}$. Moreover, {\rm(B$_6$)} holds on $\mathcal{T}_{\gamma}$ and the initial point $x^0\in\mathcal{T}_{\gamma}$ and the modified Polyak stepsizes $\alpha_k=\frac{\varphi(x^k)-\varphi^*}{2\|g(x^k)+h(x^k)\|^2}$ apply.
\end{itemize}
Then,
there exists $0<\mu<1$, in such a way that
\[
\dist^2(x^{k+1},\setX^*)
\leq\mu\dist^2(x^{k},\setX^*),
\]
holds for all iteration counts $k=0,1,2,...$.
\end{theorem}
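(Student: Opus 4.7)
The plan is to derive all five scenarios from the single recursion (iii) of Lemma~\ref{lemma:basic_recur_wcvx_hx}, namely
\begin{equation*}
\dist^2(x^{k+1},\setX^*) \;\leq\; (1+(\rho+2\eta)\alpha_k)\dist^2(x^k,\setX^*) - 2\alpha_k[\varphi(x^k)-\varphi^*] + 2\alpha_k^2\|g(x^k)+h(x^k)\|^2,
\end{equation*}
by substituting the relevant subgradient upper bound for the last term and the relevant growth/sharpness condition for $\varphi(x^k)-\varphi^*$, and then verifying that the resulting multiplicative factor in front of $\dist^2(x^k,\setX^*)$ is strictly less than one.

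In case (i)(1), (B$_{\mbox{\tiny{A-D}}}$) with $c_{10}=0$ bounds the subgradient term by $2\alpha^2 c_9^2 \dist^2(x^k,\setX^*)$, while gQG yields $\varphi(x^k)-\varphi^*\geq\dist^2(x^k,\setX^*)/\kappa_{gQG}^{\varphi}$. Collecting these, the contraction factor
$\mu = 1 + (\rho+2\eta)\alpha - 2\alpha/\kappa_{gQG}^{\varphi} + 2\alpha^2 c_9^2$
is $<1$ precisely on the stipulated interval for $\alpha$. For case (i)(2), Proposition~\ref{prop:bad}(iv) converts (B$_{\mbox{\tiny{G}}}$) with $c_{12}=0$ into (B$_{\mbox{\tiny{A-D}}}$) with $c_9=\sqrt{c_{11}(\rho+\eta+c_{11})}$ and $c_{10}=0$, so after substituting this $c_9$ into the bound from case (i)(1) we recover the prescribed stepsize interval.

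The two Polyak-stepsize scenarios (i)(3) and (ii) admit a uniform treatment. With $\eta=0$ and $\alpha_k=[\varphi(x^k)-\varphi^*]/(2\|g(x^k)+h(x^k)\|^2)$, the quadratic term telescopes, $2\alpha_k^2\|g(x^k)+h(x^k)\|^2 = \alpha_k[\varphi(x^k)-\varphi^*]$, collapsing the recursion to
\begin{equation*}
\dist^2(x^{k+1},\setX^*) \;\leq\; (1+\rho\alpha_k)\dist^2(x^k,\setX^*) - \alpha_k[\varphi(x^k)-\varphi^*].
\end{equation*}
In case (i)(3), gQG further turns the right-hand side into $(1-\alpha_k(1/\kappa_{gQG}^{\varphi}-\rho))\dist^2(x^k,\setX^*)$, whose factor is $\leq 1$ thanks to $\kappa_{gQG}^{\varphi}<1/\rho$. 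To upgrade this monotonicity into a \emph{uniform} linear rate, I will lower bound $\alpha_k$ by combining $\mathcal{T}_{\gamma}$-sharpness ($\varphi(x^k)-\varphi^*\geq\dist(x^k,\setX^*)/\kappa_s^{\varphi}$) with (B$_6$) ($\|g+h\|\leq c_8$), giving $\alpha_k\gtrsim\dist(x^k,\setX^*)/(c_8^2\kappa_s^{\varphi})$; one factor of $\dist(x^k,\setX^*)$ cancels with a factor extracted from the contraction, yielding a uniform $\mu<1$. For case (ii), gQG is replaced by sharpness directly: the bracket $\rho\dist^2(x^k,\setX^*)-[\varphi(x^k)-\varphi^*]$ is bounded by $\dist(x^k,\setX^*)(\rho\dist(x^k,\setX^*)-1/\kappa_s^{\varphi})$, which is strictly negative on $\mathcal{T}_\gamma$ under the assumed bound on $\gamma$, and combining with the same Polyak lower bound on $\alpha_k$ again produces a uniform contraction.

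The main obstacle in the Polyak cases is that (B$_6$) and sharpness are available only on a tube around $\setX^*$, so one must certify inductively that every iterate stays inside that tube. Fortunately, the recursion derived above at step $k$ already shows $\dist(x^{k+1},\setX^*)\leq\dist(x^k,\setX^*)$, so the induction closes with $x^0\in\mathcal{T}_{\gamma_0}$ (case (i)(3)) or $x^0\in\mathcal{T}_\gamma$ (case (ii)) as the base step, and the tube-based bounds on $\|g+h\|$ and on $\varphi-\varphi^*$ remain valid at every iteration; the remainder of each case is then pure algebra on the collapsed recursion.
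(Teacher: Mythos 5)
Your handling of cases (i)(1), (i)(2), and (ii) follows essentially the same route as the paper: the same recursion from Lemma~\ref{lemma:basic_recur_wcvx_hx}(iii), the same substitution of the subgradient bound and growth/sharpness condition, the same stepsize algebra, and in case (ii) the same induction certifying $x^{k+1}\in\mathcal{T}_{\gamma}$ so that sharpness and (B$_6$) remain available; those parts are fine.

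The gap is in case (i)(3). You claim that once monotonicity of $\dist(x^k,\setX^*)$ is established, ``the tube-based bounds on $\|g+h\|$ and on $\varphi-\varphi^*$ remain valid at every iteration,'' but these two bounds live on \emph{different} tubes: (B$_6$) is assumed on $\mathcal{T}_{\gamma_0}$ with $\gamma_0=\dist(x^0,\setX^*)+1$, which indeed contains every iterate, whereas the $\mathcal{T}_{\gamma}$-sharpness is only assumed on $\mathcal{T}_{\gamma}$ for some unrelated, possibly much smaller $\gamma>0$. If $\gamma<\dist(x^0,\setX^*)$, your lower bound $\alpha_k\gtrsim\dist(x^k,\setX^*)/(c_8^2\kappa_{s}^{\varphi})$ is unavailable for all early iterates, and monotone decrease alone does not yield a uniform contraction factor there, yet the theorem asserts the inequality for every $k$. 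The paper closes this with a two-phase argument: it first derives
\[
\dist^2(x^{k+1},\setX^*)\leq\dist^2(x^{k},\setX^*)-\frac{1-\rho\kappa_{gQG}^{\varphi}}{2c_8^2}\left[\varphi(x^k)-\varphi^*\right]^2,
\]
and for iterates not yet inside $\mathcal{T}_{\gamma}$ it invokes quadratic growth twice, namely $\varphi(x^k)-\varphi^*\geq\gamma^2/\kappa_{gQG}^{\varphi}$ and hence $[\varphi(x^k)-\varphi^*]^2\geq\frac{\gamma^2}{(\kappa_{gQG}^{\varphi})^2}\dist^2(x^k,\setX^*)$, to obtain a (different) contraction constant in the early phase; the final $\mu$ is the worse of the two phase constants. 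You need this early-phase argument (or an explicit assumption that the sharpness tube contains $x^0$) to complete case (i)(3).
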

\begin{proof}
\begin{itemize}
\item[{\rm(i)}]
\noindent{\rm (1)} By the subgradient upper bound {\rm(B$_{\mbox{\tiny A-D}}$)}, the term $T_1$ of the uniform recursion~\eqref{eq:recur_linear} satisfies that $T_1\leq 2\alpha_k^2c_9^2\dist^2(x^k,\setX^*)$. Thus, we have
\begin{align*}
\dist^2(x^{k+1},\setX^*)
\leq&\left(1+(\rho+2\eta)\alpha_k\right)\dist^2(x^k,\setX^*)-2\alpha_k\left[\varphi(x^k)-\varphi^*\right]+2\alpha_k^2c_9^2\dist^2(x^k,\setX^*)\nonumber\\
=&\left(1+(\rho+2\eta)\alpha_k+2\alpha_k^2c_9^2\right)\dist^2(x^k,\setX^*)-2\alpha_k\left[\varphi(x^k)-\varphi^*\right]\\
\leq&\left[1-\alpha_k\left(\frac{2}{\kappa_{gQG}^{\varphi}}-(\rho+2\eta)\right)+2\alpha_k^2c_9^2\right]\dist^2(x^{k},\setX^*)
\end{align*}
where the last inequality is because $\varphi$ satisfies the quadratic growth  property.
By setting $\alpha_k=\alpha\in\left(0,\frac{2-\kappa_{gQG}^{\varphi}(\rho+2\eta)}{2c_9^2\kappa_{gQG}^{\varphi}}\right)$, we have $1-\alpha_k\left(\frac{2}{\kappa_{gQG}^{\varphi}}-(\rho+2\eta)\right)+2\alpha_k^2c_9^2<1$. The desired result thus follows.

\noindent{\rm (2)} Under the quadratic growth condition and statement (iv) of~\Cref{prop:bad}, the subgradient upper bound condition {\rm(B$_{\mbox{\tiny {A-D}}}$)} with $c_{10}=0$ is equivalent to the subgradient upper bound condition {\rm(B$_{\mbox{\tiny {G}}}$)} with constant $c_{12}=0$. By the quadratic growth condition, if  {\rm(B$_{\mbox{\tiny {A-D}}}$)} with $c_{10}=0$ holds then  {\rm(B$_{\mbox{\tiny {G}}}$)} with $c_{12}=0$ holds.
Therefore,
the claimed result follows from the previous statement, with the adjustment made to the stepsizes.

\noindent{\rm (3)} Since $\eta=0$ and $\alpha_k=\frac{\varphi(x^k)-\varphi^*}{2\|g(x^k)+h(x^k)\|^2}$, the uniform recursion~\eqref{eq:recur_linear} yields that
\begin{equation}
\dist^2(x^{k+1},\setX^*)
\leq\left(1+\rho\alpha_k\right)\dist^2(x^k,\setX^*)-\alpha_k\left[\varphi(x^k)-\varphi^*\right].\label{eq:linear_2}
\end{equation}
Now, $\varphi$ satisfies the quadratic growth  property with $\kappa_{\tiny gQG}^{\varphi}$. Therefore,  by~\eqref{eq:linear_2} it follows that
\begin{equation}\label{eq:linear_3}
\dist^2(x^{k+1},\setX^*)
\leq\dist^2(x^{k},\setX^*)-\alpha_k(1-\rho\kappa_{\tiny gQG}^{\varphi})\left[\varphi(x^k)-\varphi^*\right].
\end{equation}
Furthermore, $\kappa_{\tiny gQG}^{\varphi}<\frac{1}{\rho}$, we obtain $\dist^2(x^{k+1},\setX^*)
\leq\dist^2(x^{k},\setX^*)\leq\dist^2(x^{0},\setX^*)$. Thus, the sequence $\{x^k\}$ generated by the algorithm is bounded and $\lim_{k\rightarrow\infty}\left[\varphi(x^k)-\varphi^*\right]=0$.
Since {\rm (B$_6$)} holds on $\mathcal{T}_{\gamma_0}$, we have a positive number $c_8>0$ such that $\|g(x^k)+h(x^k)\|\leq c_8$. Then, $\alpha_k=\frac{\varphi(x^k)-\varphi^*}{2\|g(x^k)+h(x^k)\|^2}\geq\frac{\varphi(x^k)-\varphi^*}{2c_8^2}$, and~\eqref{eq:linear_3} follows that
\begin{equation}\label{eq:linear_4}
\dist^2(x^{k+1},\setX^*)
\leq\dist^2(x^{k},\setX^*)-\frac{1-\rho\kappa_{\tiny gQG}^{\varphi}}{2c_8^2}\left[\varphi(x^k)-\varphi^*\right]^2.
\end{equation}
By $\lim_{k\rightarrow\infty}\left[\varphi(x^k)-\varphi^*\right]=0$ and the global QG of $\varphi$, there exists $k_0>0$ such that for all $k>k_0$, $\dist(x^k,\setX^*)<\gamma$ with some $\gamma>0$. By $\varphi$ is $\mathcal{T}_{\gamma}$-sharpness with constant $\kappa_{\tiny\rm s}^{\varphi}>0$,  then~\eqref{eq:linear_4} follows that
\begin{equation}\label{eq:linear_5}
\dist^2(x^{k+1},\setX^*)
\leq\left(1-\frac{(1-\rho\kappa_{\tiny gQG}^{\varphi})}{2c_8^2(\kappa_{\tiny\rm s}^{\varphi})^{2}}\right)\dist^{2}(x^{k},\setX^*),\quad\forall k>k_0.
\end{equation}
For all $k\leq k_0$, we have $\varphi(x^k)-\varphi^*\geq\frac{1}{\kappa_{\tiny gQG}^{\varphi}}\dist^2(x^k,\setX^*)\geq\frac{\gamma^2}{\kappa_{\tiny gQG}^{\varphi}}$, then $\left[\varphi(x^k)-\varphi^*\right]^2\geq\frac{\gamma^2}{\kappa_{\tiny gQG}^{\varphi}}\left[\varphi(x^k)-\varphi^*\right]\geq\frac{\gamma^2}{(\kappa_{\tiny gQG}^{\varphi})^2}\dist^2(x^k,\setX^*)$. Therefore,~\eqref{eq:linear_4} follows that
\begin{equation}\label{eq:linear_6}
\dist^2(x^{k+1},\setX^*)
\leq\left(1-\frac{(1-\rho\kappa_{\tiny gQG}^{\varphi})\gamma^2}{2c_8^2(\kappa_{\tiny gQG}^{\varphi})^2}\right)\dist^2(x^{k},\setX^*),\quad\forall k\leq k_0.
\end{equation}
\item[{\rm(ii)}] By the $\kappa_{\rm\tiny s}^{\varphi}$-sharpness of $\varphi$ on $\mathcal{T}_{\gamma}$ and $\eta=0$, for any $x^k\in\mathcal{T}_{\gamma}$, the uniform recursion~\eqref{eq:recur_linear} yields
\begin{align}
\dist^2(x^{k+1},\setX^*)
\leq&\left(1+\rho\alpha_k\right)\dist^2(x^k,\setX^*)-\alpha_k\left[\varphi(x^k)-\varphi^*\right]\nonumber\\
\leq&\dist^2(x^k,\setX^*)-\alpha_k\left(\frac{1}{\kappa_{\rm\tiny s}^{\varphi}}-\rho\dist(x^k,\setX^*)\right)\dist(x^k,\setX^*).\label{eq:linear_7}
\end{align}
Because $x^k\in\mathcal{T}_{\gamma}$ with $\gamma<\frac{1}{\rho\kappa_{\rm\tiny s}^{\varphi}}$, we have $\frac{1}{\kappa_{\rm\tiny s}^{\varphi}}-\rho\dist(x^k,\setX^*)>0$. Now that {\rm(B$_6$)} holds on $\mathcal{T}_{\gamma}$, there exists $c_8>0$ such that $\|g(x^k)+h(x^k)\|\leq c_8$. Furthermore, in combination with the $\kappa_{\rm\tiny s}^{\varphi}$-sharpness of $\varphi$ it leads to  $\alpha_k=\frac{\varphi(x^k)-\varphi^*}{2\|g(x^k)+h(x^k)\|^2}\geq\frac{\dist(x^k,\setX^*)}{2c_8^2\kappa_{\rm\tiny s}^{\varphi}}$. Then~\eqref{eq:linear_7} yields that
\[
\dist^2(x^{k+1},\setX^*)
\leq\left(1-\frac{1-\rho\kappa_{\rm\tiny s}^{\varphi}\gamma}{2c_8^2(\kappa_{\rm\tiny s}^{\varphi})^2}\right)\dist^2(x^k,\setX^*),
\]
and $x^{k+1}\in\mathcal{T}_{\gamma}$. The desired linear convergence result follows.
\end{itemize}
\end{proof}
\begin{remark}
{\rm We noted that the QG condition $\kappa_{gQG}^{\varphi}>\rho$ (or $\kappa_{gQG}^{\varphi}>\rho+2\eta$) does not mean that $\varphi$ is convex. We can easily construct the weakly convex function satisfy the conidions of~\Cref{cor:qg_wcvx_2}.}
\end{remark}
\begin{remark}
{\rm In contrast with the statement (ii) of~\Cref{cor:qg_wcvx_2}, by using the uniform recursion~~\eqref{eq:recur_linear}, we also can provide the linear convergence of Proj-SubGrad method with the Polyak style stepsize $\alpha_k=\frac{\varphi(x^k)-\varphi^*}{\|g(x^k)+h(x^k)\|^2}$  under the global sharpness condition (i.e., $\varphi(x)-\varphi^*\geq\kappa_{\mbox{\rm\tiny s}}\dist(x,\setX^*)$, $\forall x\in\setX$). For a complete proof, cf.~\cite{dima2018subgradient}).
}
\end{remark}
\section{Extention to stochastic subgradient methods} \label{sec:sgd}
In this section, we consider a
nonsmooth stochastic optimization problem and turn our attention to analyzing the subgradient upper bounds of nonsmooth stochastic optimization and giving more general convergence rate for the  proximal stochastic subgradient method. We note that the normalizing stochastic subgradients (stepsize as (B$_1$)) may introduce bias or may not be well-defined if $g(x^k;\xi)=0$, so it is hard to adapt subgradient upper bound scenario (B$_1$) to stochastic optimization. Therefore, the expected stochastic subgradient upper bound of term $f$ is the key issue for nonsmooth stochastic optimization.
We shall analyze adaptive (B$_{\mbox{\tiny {D-D}}}$),  (B$_2$), and (B$_4$) for the following stochastic optimization model:
\begin{equation}\label{eq:stchastic_opt problem}
\min_{x\in \setX} \ {\varphi}(x)=f(x)+r(x)=\setE_{\xi\sim\Xi}\left[{f}(x,\xi)\right]+r(x),
\end{equation}
where $\xi$ is
a random vector,
which is assumed to follow the probability distribution $\Xi$; $f(\cdot,\xi):\setX\rightarrow\R$ and $r:\setX\rightarrow\R$ are assumed to be proper and lower-semicontinuous. We shall also denote $\setX^*$ to be the set of optimal solutions of~\eqref{eq:stchastic_opt problem}. We assume $\setX^*\neq\emptyset$. 
For any $x^*\in\setX^*$, we denote $\varphi^*=\varphi(x^*)$. If problem \eqref{eq:stchastic_opt problem} is nonconvex, then the set of its critical points is denoted by $\overline \setX$. Moreover, we consider solving \eqref{eq:stchastic_opt problem} via the following stochastic subgradient method:
\begin{algorithm}[ht]
	\caption{Sto-SubGrad: Stochastic Subgradient method for solving \eqref{eq:stchastic_opt problem}}
	{\bf Initialization:}  $x^0$ and $\alpha_0$;
	\begin{algorithmic}[1]
		\item[1:] {\bf for} {$k=0,1,\ldots$} {\bf do}
		\item[2:] $\quad$Choose $\xi_k\sim\Xi$ and compute a subgradient $g(x^k,\xi_k)\in\partial f(x^k,\xi_k)$;
		\item[3:] $\quad$Update the step size $\alpha_{k}$ according to a certain rule;
		\item[4:] $\quad$Update $x^{k+1}: =\arg\min_{x\in\setX} \langle g(x^k,\xi_k),x\rangle+r(x)+\frac{1}{2\alpha_k}\|x-x^{k}\|^2$.
		\item[5:] {\bf end for}
	\end{algorithmic}
	\label{alg:SGD}
\end{algorithm}
\subsection{Expected stochastic subgradient upper bounds}\label{sec:sto_subgrad_upp}
Before analyzing Sto-SubGrad, let us introduce the following expected stochastic subgradient upper bounding condition which guarantees the convergence of Sto-SubGrad without assuming Lipschitz continuity of the objective function $f(x)$. These upper bounds can be seen as adaptive subgradient upper bounds (B$_2$)-(B$_4$) for the stochastic optimization problem. The following expected subgradient upper bound (B$_{\mbox{\tiny D-D}}$) was used in~\cite{davis2019stochastic}; (B$_2'$) was used in~\cite{culioli1990}. However, conditions (B$_3'$) and (B$_4'$) are new. For given $\xi\sim\Xi$, let $g(x,\xi)$ be one subgradient of $f(x,\xi)$, $\forall x\in\setX$.

\begin{definition}[Expected stochastic subgradient upper bounds] {\ }
{\rm
\begin{itemize}
\item[{\rm (B$_{\mbox{\tiny D-D}}'$).}]
$\varphi$ is said to have the $f$-expected Lipschitz subgradient upper bound if for all $x\in\setX$, $\forall g(x,\xi)\in\partial f(x,\xi)$, $\|g(x,\xi)\|\leq L_f(\xi)$ and $\setE_{\xi\sim\Xi}[(L_f(\xi))^2]\leq L_f^2$ with $L_f\geq0$.
\item[{\rm (B$_2'$).}]
$\varphi$ is said to have the linear expected subgradient upper bound if there are $c_1,c_2\geq0$ such that for all $x\in\setX$,  $\forall g(x,\xi)\in\partial f(x,\xi)$, $\forall h(x)\in\partial r(x)$, it holds that $\setE_{\xi\sim\Xi}\left[\|g(x,\xi)+h(x)\|^2\right]\leq c_1\|x\|^2+c_2$.
\item[{\rm (B$_3'$).}]
$\varphi$ is said to have the $[\varphi_{\lambda}(x)-\varphi^*]$-expected subgradient upper bound if there are $c_3,c_4\geq0$ such that for all $x\in\setX$, $\forall g(x,\xi)\in\partial f(x,\xi)$, $\forall h(x)\in\partial r(x)$, it holds that $\setE_{\xi\sim\Xi}\left[\|g(x,\xi)+h(x)\|^2\right]\leq c_3[\varphi_{\lambda}(x)-\varphi^*]+c_4$.
\item[{\rm (B$_4'$).}]
$\varphi$ is said to to have the $\|\nabla \varphi_{\lambda}(x)\|$-expected subgradient upper bound if there are $c_5, c_{6}\geq0$ such that for all $x\in\setX$, $\forall g(x,\xi)\in\partial f(x,\xi)$, $\forall h(x)\in\partial r(x)$, it holds that
$\setE_{\xi\sim\Xi}\left[\|g(x,\xi)+h(x)\|^2\right]\leq c_5\|\nabla\varphi_{\lambda}(x)\|^2+c_{6}$.
\end{itemize}
}
\end{definition}

Next, we note two expected stochastic subgradient upper bounding conditions (B$_{\mbox{\tiny {A-D}}}'$) and (B$_{\mbox{\tiny {G}}}'$) from~\cite{asi2019importance} and~\cite{grimmer2018} respectively.

\begin{definition}{\ }

{\rm
\begin{itemize}
\item[{\rm (B$_{\mbox{\tiny {A-D}}}'$).}]
$\partial\varphi$ is said to have $\dist(x,\setX^*)$-expected stochastic subgradient upper bound if there are $c_9, c_{10}\geq0$ such that for all $x\in\setX$, $\forall g(x)\in \partial f(x)$ and $\forall h(x)\in \partial r(x)$, it holds that $\setE_{\xi\sim\Xi}\left[\|g(x,\xi)+h(x)\|^2\right]\leq c_9\dist^2(x,\setX^*)+c_{10}$.
\item[{\rm (B$_{\mbox{\tiny {G}}}'$).}]
$\partial\varphi$ is said to have $[\varphi(x)-\varphi^*]$-expected stochastic subgradient upper bound if $c_{11}, c_{12}\geq0$ such that for all $x\in\setX$, $\forall g(x)\in \partial f(x)$ and $\forall h(x)\in \partial r(x)$, it holds that $\setE_{\xi\sim\Xi}\|g(x,\xi)+h(x)\|^2\leq c_{11}[\varphi(x)-\varphi^*]+c_{12}$.
\end{itemize}
}
\end{definition}
\subsubsection{Relationship among stochastic subgradient upper bounds}
The following two propositions illustrate the relationships among the notions of subgradient upper bounding conditions (B$_2'$)-(B$_4'$) when $f(\cdot,\xi)$ is $\rho(\xi)$-weakly convex. The technical proofs of the following~\Cref{prop:upperbound6},~\Cref{prop:upperbound7}, and~\Cref{prop:bad-s} are similar to that of~\Cref{prop:upperbound2},~\Cref{prop:upperbound3}, and~\Cref{prop:bad}, respectively. For the sake of brevity, we omit the proofs here.
\begin{proposition}\label{prop:upperbound6}{\bf(Relationship between expected stochastic subgradient upper bounds in weakly convex case)} Suppose that $r$ is $\eta$-weakly convex with $\eta\geq0$ and $f(\cdot,\xi)$ is $\rho(\xi)$-weakly convex with $\rho(\xi)>0$ and $\setE_{\xi\in\Xi}\left[\rho(\xi)\right]\leq\rho$ with $\rho$ being a positive number. Then, we have $\mbox{\rm(B$_4'$) $\Longrightarrow$ (B$_3'$)  $\Longrightarrow$ (B$_2'$).}$
\end{proposition}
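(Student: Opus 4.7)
The proposition mirrors its deterministic counterpart (Proposition prop:upperbound2) exactly, and my plan is to follow essentially the same chain of inequalities, noting that the expectation is taken only over $\xi$ while the right-hand sides of $(B_3')$ and $(B_4')$ are deterministic quantities depending solely on $x$. The key observation is that because $\varphi(x)=\setE_\xi[f(x,\xi)]+r(x)$ is $(\rho+\eta)$-weakly convex (since $\setE_\xi[\rho(\xi)]\le\rho$), its Moreau envelope $\varphi_\lambda$ with $\lambda<1/(\rho+\eta)$ is well-defined and deterministic, so $\|\nabla\varphi_\lambda(x)\|$ and $\varphi_\lambda(x)-\varphi^*$ can simply be pulled out of the expectation.

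For $(B_4')\Longrightarrow(B_3')$, I would start from the identity $\|\nabla\varphi_\lambda(x)\|^2 = \frac{1}{\lambda^2}\|x-\prox_{\lambda,\varphi}(x)\|^2$ in statement (iv) of Proposition prop:wcvxf_me, then combine with the definition of $\varphi_\lambda$ and the non-negativity $\varphi(\prox_{\lambda,\varphi}(x))-\varphi^*\ge 0$ (from Corollary cor:me) to obtain the deterministic bound
\[
\|\nabla\varphi_\lambda(x)\|^2 \le \frac{2}{\lambda}\bigl[\varphi_\lambda(x)-\varphi^*\bigr].
\]
Substituting this directly into $(B_4')$ gives $\setE_{\xi\sim\Xi}[\|g(x,\xi)+h(x)\|^2]\le \frac{2c_5}{\lambda}[\varphi_\lambda(x)-\varphi^*]+c_6$, i.e., $(B_3')$ with $c_3=2c_5/\lambda$ and $c_4=c_6$.

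For $(B_3')\Longrightarrow(B_2')$, I would pick any $x^*\in\setX^*$ and use the definition of the Moreau envelope to get $\varphi_\lambda(x)\le\varphi(x^*)+\frac{1}{2\lambda}\|x-x^*\|^2=\varphi^*+\frac{1}{2\lambda}\|x-x^*\|^2$, then apply $\|x-x^*\|^2\le 2\|x\|^2+2\|x^*\|^2$. Plugging into $(B_3')$ yields
\[
\setE_{\xi\sim\Xi}\bigl[\|g(x,\xi)+h(x)\|^2\bigr]\le \frac{c_3}{\lambda}\|x\|^2+\frac{c_3\|x^*\|^2}{\lambda}+c_4,
\]
which is $(B_2')$ with $c_1'=c_3/\lambda$ and $c_2'=c_3\|x^*\|^2/\lambda+c_4$.

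There is no serious obstacle here because the argument is a direct transcription of the deterministic proof; the only thing to be explicit about is that all the Moreau-envelope quantities used as upper bounds on the $\xi$-dependent subgradient norm are themselves $\xi$-free, so no conditional-expectation or interchange-of-expectation issues arise. The one minor care-point is that the hypothesis $\setE_\xi[\rho(\xi)]\le\rho$ is what ensures the parameter $\lambda$ used in defining $\varphi_\lambda$ can be chosen uniformly as $\lambda<1/(\rho+\eta)$, consistent with the assumptions made by the deterministic propositions being invoked.
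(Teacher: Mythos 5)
Your proposal is correct and follows essentially the same route as the paper: the paper omits this proof, stating it is analogous to that of Proposition~\ref{prop:upperbound2}, and your argument is exactly that deterministic chain of inequalities ($\|\nabla\varphi_\lambda(x)\|^2\le\frac{2}{\lambda}[\varphi_\lambda(x)-\varphi^*]$ for the first implication, and $\varphi_\lambda(x)\le\varphi^*+\frac{1}{2\lambda}\|x-x^*\|^2$ for the second) transcribed to the stochastic setting. Your explicit remark that the Moreau-envelope bounds are $\xi$-free, so the expectation passes through without issue, is precisely the observation that justifies the paper's "proof is similar" shortcut.
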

\begin{proposition}\label{prop:upperbound7} Suppose $r$ is $\eta$-weakly convex with $\eta\geq0$ and $f(\cdot,\xi)$ is $\rho(\xi)$-weakly convex with $\rho(\xi)>0$ and $\setE_{\xi\in\Xi}\left[\rho(\xi)\right]\leq\rho$ with $\rho$ being a positive number. Then, the following assertions hold:
\begin{itemize}
\item[{\rm (i)}] If $\varphi$ satisfies the global quadratic growth property with parameter $\kappa_{gQG}^{\varphi}>0$, i.e., $\dist^2(x,\setX^*)\leq\kappa_{gQG}^{\varphi}[\varphi(x)-\varphi^*]$, $\forall x\in\setX$, and the set of critical points $\overline{ {\setX} }$ is bounded by constant $\sB_{\overline{\setX}}>0$, then we have that {\rm(B$_2'$) $\Longrightarrow$ (B$_3'$)}. In other words, upper bounds {\rm(B$_2'$)-(B$_3'$)} are equivalent in this case.
\item[{\rm (ii)}] If $\partial\varphi$ satisfies the global metric subregularity property with parameter $\kappa_{gMS}^{\varphi}>0$, i.e., $\dist(x,\overline{\setX})\leq\kappa_5\dist(0,\partial\varphi(x))$, $\forall x\in\setX$, and the set of critical points $\overline{ {\setX} }$ is bounded by constant $\sB_{\overline{\setX}}>0$, then we have that {\rm(B$_2'$) $\Longrightarrow$ (B$_4'$)}. In other words, upper bounds {\rm(B$_2'$) and (B$_4'$)} are equivalent in this case.
\item[{\rm (iii)}] If there are $\tau$  ($0<\tau<1$) and $R_0>0$ such that $\|\prox_{\lambda,\varphi}(x)\|\leq \tau \|x\|,\quad\forall x\in\{x\in\setX\mid\|x\|\geq R_0\}$, then {\rm(B$_2'$) $\Longrightarrow$ (B$_4'$)}. In other words, upper bounds {\rm(B$_2'$) and (B$_4'$)} are equivalent in this case.
\end{itemize}
\end{proposition}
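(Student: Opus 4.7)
The overall plan is to mirror the argument used for Proposition~\ref{prop:upperbound3} in the stochastic setting, exploiting the fact that the right-hand sides of (B$_2'$), (B$_3'$), (B$_4'$) are all \emph{expected squared norms}. The essential mechanism is therefore identical: start from the linear-in-$\|x\|^2$ bound on $\setE_{\xi}[\|g(x,\xi)+h(x)\|^2]$ guaranteed by (B$_2'$), convert $\|x\|^2$ to $\dist^2(x,\overline{\setX})$ via the boundedness of $\overline{\setX}$, and then re-express the distance either through $\varphi_{\lambda}(x)-\varphi^*$ (for (i)) or $\|\nabla\varphi_{\lambda}(x)\|$ (for (ii) and (iii)). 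The combination (B$_3'$)$\Rightarrow$(B$_2'$) and (B$_4'$)$\Rightarrow$(B$_2'$) is already in Proposition~\ref{prop:upperbound6}, so only the forward implication needs to be established each time.

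For part (i), I would first use the triangle-based split $\|x\|^2\leq 2\dist^2(x,\overline{\setX})+2\sB_{\overline{\setX}}^2$ and then apply the chain used in the proof of Proposition~\ref{prop:upperbound3}(i): the global quadratic growth $\dist^2(\prox_{\lambda,\varphi}(x),\setX^*)\leq\kappa_{gQG}^{\varphi}[\varphi(\prox_{\lambda,\varphi}(x))-\varphi^*]$ together with $\setX^*\subseteq\overline{\setX}$ and the elementary inequality $\dist^2(x,\overline{\setX})\leq 2\|x-\prox_{\lambda,\varphi}(x)\|^2+2\dist^2(\prox_{\lambda,\varphi}(x),\overline{\setX})$ yields $\dist^2(x,\overline{\setX})\leq 2\max\{\kappa_{gQG}^{\varphi},2\lambda\}[\varphi_{\lambda}(x)-\varphi^*]$. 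Taking expectation through (B$_2'$) then produces constants $c_3,c_4\geq 0$ with $\setE_{\xi}[\|g(x,\xi)+h(x)\|^2]\leq c_3[\varphi_{\lambda}(x)-\varphi^*]+c_4$, which is (B$_3'$). Note that since $f(\cdot,\xi)$ is only $\rho(\xi)$-weakly convex rather than uniformly so, I should use $\setE_{\xi}[\rho(\xi)]\leq\rho$ only when invoking the Moreau envelope $\varphi_{\lambda}$ of the deterministic $\varphi=\setE_{\xi}[f(\cdot,\xi)]+r$; this is the cleanest way to stay inside the regime $\lambda<\frac{1}{\rho+\eta}$ where Proposition~\ref{prop:wcvxf_me} applies.

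For part (ii), I would instead invoke Proposition~\ref{prop:global_eb}(iii) to pass from the gMS of $\partial\varphi$ to the gMS of $\nabla\varphi_{\lambda}$, giving $\dist(x,\overline{\setX})\leq\kappa_{gMS}^{\varphi_{\lambda}}\|\nabla\varphi_{\lambda}(x)\|$. Combining with $\|x\|^2\leq 2\dist^2(x,\overline{\setX})+2\sB_{\overline{\setX}}^2$ and (B$_2'$) immediately yields $\setE_{\xi}[\|g(x,\xi)+h(x)\|^2]\leq 2c_1(\kappa_{gMS}^{\varphi_{\lambda}})^2\|\nabla\varphi_{\lambda}(x)\|^2+(2c_1\sB_{\overline{\setX}}^2+c_2)$, i.e.\ (B$_4'$). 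For part (iii), the argument is the shortest: from $\|x\|\leq\|x-\prox_{\lambda,\varphi}(x)\|+\|\prox_{\lambda,\varphi}(x)\|\leq\|x-\prox_{\lambda,\varphi}(x)\|+\tau\|x\|$ on $\{\|x\|\geq R_0\}$, we obtain $\|x\|\leq\frac{\lambda}{1-\tau}\|\nabla\varphi_{\lambda}(x)\|$ there, and squaring and feeding into (B$_2'$) gives (B$_4'$) on the tail; on the compact region $\{\|x\|\leq R_0\}$ the constant term $c_6$ can simply be enlarged to absorb $c_1 R_0^2+c_2$, which is uniform.

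The only delicate point I foresee is keeping the weak-convexity constants consistent in the stochastic setting: the Moreau envelope is formed from $\varphi=\setE_{\xi}[f(\cdot,\xi)]+r$, which is $(\rho+\eta)$-weakly convex via Jensen and the hypothesis $\setE_{\xi}[\rho(\xi)]\leq\rho$, so all invocations of Proposition~\ref{prop:wcvxf_me}, Corollary~\ref{cor:me}, and Proposition~\ref{prop:global_eb} require $\lambda<\frac{1}{\rho+\eta}$ rather than a pathwise bound involving $\rho(\xi)$. Once this is fixed, the three items reduce to the deterministic argument applied squared, and the explicit constants can be read off exactly as in Proposition~\ref{prop:upperbound3}.
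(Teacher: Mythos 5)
Your proposal is correct and follows essentially the same route the paper intends: the paper omits this proof, stating only that it is analogous to Proposition~\ref{prop:upperbound3}, and your argument is precisely that adaptation --- the same chain through $\dist^2(x,\overline{\setX})$, the quadratic growth bound $\dist^2(x,\overline{\setX})\leq 2\max\{\kappa_{gQG}^{\varphi},2\lambda\}[\varphi_{\lambda}(x)-\varphi^*]$ for (i), the passage from gMS of $\partial\varphi$ to gMS of $\nabla\varphi_{\lambda}$ for (ii), and the contraction estimate $\|x\|\leq\frac{\lambda}{1-\tau}\|\nabla\varphi_{\lambda}(x)\|$ for (iii), with expectations handled by noting that $\setE_{\xi}[\|g(x,\xi)+h(x)\|^2]$ is a deterministic function of $x$. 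Your additional care on the compact region $\{\|x\|\leq R_0\}$ in part (iii) and on the weak-convexity constant for $\varphi_{\lambda}$ are correct refinements that do not change the argument.
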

The following~\Cref{prop:bad-s} will establish the relationships among (B$_{\mbox{\tiny {A-D}}}'$), (B$_{\mbox{\tiny {G}}}'$), and (B$_2'$)-(B$_4'$).
\begin{proposition}\label{prop:bad-s}
Suppose $r$ is $\eta$-weakly convex with $\eta\geq0$ and $f(\cdot,\xi)$ is $\rho(\xi)$-weakly convex with $\rho(\xi)>0$ and $\setE_{\xi\in\Xi}\left[\rho(\xi)\right]\leq\rho$ with $\rho$ being a positive number. $\varphi_{\lambda}$ is the Moreau envelope of $\varphi$ with $\lambda<\frac{1}{\rho+\eta}$. Then, we have
\begin{itemize}
\item[{\rm (i)}] {\rm(B$_{\mbox{\tiny G}}'$)} $\Longleftrightarrow$ {\rm(B$_3'$)}.
\item[{\rm (ii)}] {\rm(B$_{\mbox{\tiny {A-D}}}'$)} $\Longrightarrow$ {\rm(B$_2'$)}. Conversely, if the optimal solution set $\setX^*$ is bounded, then {\rm (B$_2'$)} $\Longrightarrow$ {\rm(B$_{\mbox{\tiny {A-D}}}'$)}.
\item[{\rm (iii)}] {\rm(B$_G'$)} $\Longrightarrow$ {\rm(B$_{\mbox{\tiny {A-D}}}'$)} with $c_9=\sqrt{c_{11}(\rho+\eta+c_{11})}$ and $c_{10}=\sqrt{2c_{12}}$. Conversely, if $\varphi$ satisfies the global quadratic growth property with parameter $\kappa_{gQG}^{\varphi}>0$, i.e., $\dist^2(x,\setX^*)\leq\kappa_{gQG}^{\varphi}[\varphi(x)-\varphi^*]$, $\forall x\in\R^d$, then we have {\rm(B$_{\mbox{\tiny {A-D}}}'$)} $\Longrightarrow$ {\rm(B$_3'$)}.
\end{itemize}
\end{proposition}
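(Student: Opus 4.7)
The plan is to mirror the proof of Proposition~\ref{prop:bad} verbatim, replacing pointwise bounds on $\|g(x)+h(x)\|$ by bounds on $\setE_{\xi}[\|g(x,\xi)+h(x)\|^2]$. Note that under the hypotheses, $\varphi=f+r$ is $(\rho+\eta)$-weakly convex (weak convexity parameters average under expectation since $\setE_{\xi}[\rho(\xi)]\leq\rho$), so every step of the deterministic argument that used the $(\rho+\eta)$-weak convexity of $\varphi$ carries over. The only structural change is that inequalities involving products $\|g+h\|\cdot\|v\|$ get replaced by Young-type inequalities that split into a term proportional to $\setE[\|g+h\|^2]$ and a term controlled by $\|v\|^2$, so that taking expectation on both sides is clean.

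For part~(i), the direction (B$_3'$) $\Rightarrow$ (B$_{\mbox{\tiny G}}'$) follows immediately from the trivial bound $\varphi_{\lambda}(x)\leq\varphi(x)$ (by the definition of the Moreau envelope at $y=x$). For the reverse direction, I will repeat the decomposition used in Proposition~\ref{prop:bad}(i):
\[
\varphi(x)-\varphi^* = [\varphi_{\lambda}(x)-\varphi^*] + \varphi(x) - \Bigl[\varphi(\prox_{\lambda,\varphi}(x)) + \tfrac{1}{2\lambda}\|x-\prox_{\lambda,\varphi}(x)\|^2\Bigr],
\]
use $(\rho+\eta)$-weak convexity of $\varphi$ (with $\lambda<1/(\rho+\eta)$) to dominate the bracketed term by $\langle g(x,\xi)+h(x),x-\prox_{\lambda,\varphi}(x)\rangle$, apply Young's inequality with parameter $c_{11}$ to get $\frac{1}{2c_{11}}\|g(x,\xi)+h(x)\|^2+\frac{c_{11}}{2}\|x-\prox_{\lambda,\varphi}(x)\|^2$, take expectation in $\xi$, invoke (B$_{\mbox{\tiny G}}'$), and solve the resulting inequality for $\setE_{\xi}[\|g(x,\xi)+h(x)\|^2]$.

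For part~(ii), (B$_{\mbox{\tiny A-D}}'$) $\Rightarrow$ (B$_2'$) uses $\dist(x,\setX^*)\leq\|x\|+\|x^*\|$ for any fixed $x^*\in\setX^*$ and absorbs constants; conversely, when $\setX^*$ is bounded, $\|x\|\leq\dist(x,\setX^*)+\sup_{y\in\setX^*}\|y\|$ gives (B$_{\mbox{\tiny A-D}}'$) after squaring. Part~(iii) has two halves: (B$_{\mbox{\tiny G}}'$) $\Rightarrow$ (B$_{\mbox{\tiny A-D}}'$) is obtained by picking $x_p^*=\arg\min_{y\in\setX^*}\|y-x\|$, using $(\rho+\eta)$-weak convexity to write $\varphi(x)-\varphi^*\leq\langle g(x,\xi)+h(x),x-x_p^*\rangle+\tfrac{\rho+\eta}{2}\dist^2(x,\setX^*)$, applying Young's inequality with parameter $c_{11}$, taking expectations, and invoking (B$_{\mbox{\tiny G}}'$); the constants $c_9=\sqrt{c_{11}(\rho+\eta+c_{11})}$ and $c_{10}=\sqrt{2c_{12}}$ fall out. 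For the converse, (B$_{\mbox{\tiny A-D}}'$) combined with the global quadratic growth gives $\setE_{\xi}[\|g(x,\xi)+h(x)\|^2]\leq c_9\kappa_{gQG}^{\varphi}[\varphi(x)-\varphi^*]+c_{10}$, i.e., (B$_{\mbox{\tiny G}}'$), which in turn yields (B$_3'$) by part~(i).

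The main subtlety, and the place I expect to spend the most care, is the Young's inequality step in the $(\rho+\eta)$-weak convexity argument of part~(i) (and its echo in part~(iii)): in the deterministic setting one can split $\|g+h\|\cdot\|v\|$ pointwise and then invoke (B$_{\mbox{\tiny G}}$) afterwards, but in the stochastic version one must take expectations before invoking (B$_{\mbox{\tiny G}}'$) (which controls $\setE_{\xi}[\|\cdot\|^2]$, not $\|\cdot\|$). Consequently the Young's parameter must be chosen so that the coefficient of $\|x-\prox_{\lambda,\varphi}(x)\|^2$ remains compatible with the factor $\tfrac{1}{\lambda}-(\rho+\eta)>0$, and the constants $c_{11}\lambda$ picked up must be absorbed into an updated $c_3$, $c_4$ pair. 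No new ideas are needed beyond what appears in Proposition~\ref{prop:bad}; the propositions are genuinely parallel, which is why the authors defer to the deterministic argument.
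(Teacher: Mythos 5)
Your proposal is correct and takes essentially the same route as the paper: the authors explicitly omit the proof of this proposition, stating that it is obtained by repeating the argument of Proposition~\ref{prop:bad} with pointwise subgradient bounds replaced by bounds on $\setE_{\xi}[\|g(x,\xi)+h(x)\|^2]$, which is precisely your plan, and you correctly identify the one point requiring care (applying the weak-convexity/Young step realization-wise and taking expectations before invoking the expected upper bound). The adaptation goes through as you describe.
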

\subsection{Convergence analysis of Sto-SubGrad for stochastic weakly convex optimization}\label{sec:rate-SGD}
At the beginning of this subsection, we establish the basic recursion of Sto-SubGrad for the weakly convex case. The technical proofs of the following~\Cref{lemma:inequality_iter_s},~\Cref{lemma:basic_recur_wcvx_hx_s}, and~\Cref{lemma:profound_recur_wcvx_s} are similar to~\Cref{lemma:basic_recur_wcvx},~\Cref{lemma:basic_recur_wcvx_hx}, and~\Cref{lemma:profound_recur_wcvx}, respectively. For the sake of brevity, we omit the proofs.
\begin{lemma}[Basic recursion of Sto-SubGrad]\label{lemma:inequality_iter_s}
Suppose that $r$ is $\eta$-weakly convex with $\eta>0$, $f(\cdot,\xi)$ is $\rho(\xi)$-weakly convex with $\rho(\xi)>0$ and $\setE_{\xi\sim\Xi}\left[\rho(\xi)\right]\leq\rho$ with $\rho$ and $\lambda\leq\frac{1}{\rho+2\eta}$ be two positive numbers. Let $\{x^k\}_{k\in \N}$ be the sequence generated by Sto-SubGrad  for solving problem \eqref{eq:stchastic_opt problem}. Then, for all $x\in\setX$, the following hold:
\begin{itemize}
\item[{\rm(i)}] If $\alpha_k<\frac{1}{6\eta}$, it holds that
\begin{center}
$
\begin{aligned}
\mathbb{E}_{\xi_k}\|x-x^{k+1}\|^2
\leq&\left(1-\frac{(1-\lambda(\rho+2\eta))\alpha_k}{\lambda}\right)\|x-x^k\|^2-2\alpha_k\left[\varphi(x^k)-\varphi(x)-\frac{1}{2\lambda}\|x-x^k\|^2\right] \\
&+2\alpha_k^2\mathbb{E}_{\xi_k}\|g(x^k,\xi_k)+h(x^k)\|^2.
\end{aligned}
$
\end{center}
\item[{\rm(ii)}] Similarly, if $\alpha_k<\frac{1}{2(\rho+2\eta+\frac{1}{\lambda})}$, then it holds that
\begin{center}
$
\begin{aligned}
\mathbb{E}_{\xi_k}\|x-x^{k+1}\|^2
\leq&\left(1-\frac{(1-\lambda(\rho+2\eta))\alpha_k}{\lambda}\right)\|x-x^k\|^2
-2\alpha_k\mathbb{E}_{\xi_k}\left[\varphi(x^{k+1})-\varphi(x)-\frac{1}{2\lambda}\|x-x^k\|^2\right] \\
 &-\frac{\alpha_k}{\lambda}\mathbb{E}_{\xi_k}\|x^k-x^{k+1}\|^2
+2\alpha_k^2\mathbb{E}_{\xi_k}\|g(x^k,\xi_k)-g(x^{k+1},\xi_k)\|^2.
\end{aligned}
$
\end{center}
\item[{\rm(iii)}] If $\alpha_k\leq\frac{1}{2\eta}$, then $\|x^k-x^{k+1}\|\leq\alpha_k\|g(x^k,\xi_k)+h(x^k)\|$.
\end{itemize}
\end{lemma}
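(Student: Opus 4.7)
The plan is to mirror the deterministic argument of Lemma \ref{lemma:basic_recur_wcvx}, keeping the realization $\xi_k$ frozen throughout the algebraic manipulations and only taking the conditional expectation $\mathbb{E}_{\xi_k}[\cdot]$ at the very last step. The essential tools are: the first-order optimality condition for the subproblem in Step~4 of \Cref{alg:SGD}, the weak-convexity subgradient inequalities applied pathwise to $f(\cdot,\xi_k)$ (with modulus $\rho(\xi_k)$) and to $r$ (with modulus $\eta$), the three-point identity $2\langle u-v,\,w-u\rangle = \|v-w\|^2 - \|u-w\|^2 - \|u-v\|^2$, and Young's inequality.

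For (i), I would first write the optimality condition of the proximal update as
\[
\bigl\langle g(x^k,\xi_k) + h(x^{k+1}) + \tfrac{1}{\alpha_k}(x^{k+1}-x^k),\; x - x^{k+1}\bigr\rangle \geq 0, \quad \forall x\in\setX,
\]
where $h(x^{k+1})\in\partial r(x^{k+1})$. Expanding the inner product with $\tfrac{1}{\alpha_k}(x^{k+1}-x^k)$ via the three-point identity produces the term $\|x-x^k\|^2-\|x-x^{k+1}\|^2-\|x^k-x^{k+1}\|^2$. I then insert the $\rho(\xi_k)$-weak-convexity inequality for $f(\cdot,\xi_k)$ at $x^k$ and the $\eta$-weak-convexity inequality for $r$ at $x^{k+1}$, writing $h(x^{k+1})$ as $h(x^k)+(h(x^{k+1})-h(x^k))$ so that a clean $\langle g(x^k,\xi_k)+h(x^k),\,x-x^k\rangle$ appears. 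Applying Young's inequality to this cross term against the quadratic $\|x^k-x^{k+1}\|^2$, and using the stepsize constraint $\alpha_k<\tfrac{1}{6\eta}$ to absorb the remaining $\|x^k-x^{k+1}\|^2$ contributions, yields a pathwise inequality identical in form to (i) but with $\rho(\xi_k)$ in place of $\rho$. Finally I take $\mathbb{E}_{\xi_k}[\cdot]$, using the two facts $\mathbb{E}_{\xi_k}[f(x^k,\xi_k)]=f(x^k)$ (since $x^k$ is measurable with respect to the $\sigma$-field generated by $\xi_0,\dots,\xi_{k-1}$) and $\mathbb{E}_{\xi_k}[\rho(\xi_k)]\leq\rho$.

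Part (ii) follows the same template but invokes the weak-convexity inequality for $f(\cdot,\xi_k)$ at $x^{k+1}$ rather than at $x^k$; this substitutes $\varphi(x^{k+1})$ for $\varphi(x^k)$ and generates the extra residual $\|g(x^k,\xi_k)-g(x^{k+1},\xi_k)\|^2$, with the tighter stepsize bound $\alpha_k<\tfrac{1}{2(\rho+2\eta+1/\lambda)}$ arising from the need to absorb an additional $\|x^k-x^{k+1}\|^2$ term created by the cross product. Part (iii) is a direct consequence of the optimality condition specialized to $x=x^k$: one obtains $\tfrac{1}{\alpha_k}\|x^k-x^{k+1}\|^2 \leq \langle g(x^k,\xi_k)+h(x^{k+1}),\,x^k-x^{k+1}\rangle$, and then splits $h(x^{k+1})=h(x^k)+(h(x^{k+1})-h(x^k))$ and bounds the latter inner product by $\eta\|x^k-x^{k+1}\|^2$ via the $\eta$-weak-convexity of $r$; Cauchy--Schwarz together with $\alpha_k\leq\tfrac{1}{2\eta}$ rearranges to the stated bound.

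The main obstacle I anticipate is the careful bookkeeping of the random weak-convexity constant $\rho(\xi_k)$: it multiplies the $\xi_k$-independent quantity $\|x-x^k\|^2$ inside the conditional expectation, so the inequality $\mathbb{E}_{\xi_k}[\rho(\xi_k)]\leq\rho$ can be applied cleanly only after one has rearranged the pathwise inequality so that $\rho(\xi_k)$ appears as a linear multiplier of a $\xi_k$-measurable factor. Beyond that, the analysis is routine and parallels the deterministic proof of \Cref{lemma:basic_recur_wcvx} step by step.
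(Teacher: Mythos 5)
Your proposal follows exactly the route the paper intends: the paper omits this proof, stating only that it is ``similar to'' the deterministic \Cref{lemma:basic_recur_wcvx}, and your plan --- run that argument pathwise with $\xi_k$ frozen and modulus $\rho(\xi_k)$, then take $\mathbb{E}_{\xi_k}$ at the end using $\mathbb{E}_{\xi_k}[f(x^k,\xi_k)]=f(x^k)$ and $\mathbb{E}_{\xi_k}[\rho(\xi_k)]\le\rho$ --- is precisely that adaptation (your use of the first-order optimality condition plus the three-point identity is an equivalent variant of the paper's use of the $(\tfrac{1}{\alpha_k}-\eta)$-strong convexity of the subproblem objective). One small correction to your closing remark: the factor multiplying $\rho(\xi_k)$ must be $\xi_k$-\emph{independent} (measurable with respect to $\sigma(\xi_0,\dots,\xi_{k-1})$), not ``$\xi_k$-measurable'', for $\mathbb{E}_{\xi_k}[\rho(\xi_k)]\le\rho$ to be factored out --- which is exactly why part (ii), where $\rho(\xi_k)$ also multiplies the $\xi_k$-dependent quantity $\|x^k-x^{k+1}\|^2$, requires slightly more care than part (i).
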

\begin{lemma}[Recursion of Sto-SubGrad with $x=\hat{x}^k=\prox_{\lambda,\varphi}(x^k)$ and $x=x^*$]\label{lemma:basic_recur_wcvx_hx_s} Suppose that $r$ is $\eta$-weakly convex with $\eta>0$, $f(\cdot,\xi)$ is $\rho(\xi)$-weakly convex with $\rho(\xi)>0$ and $\setE_{\xi\sim\Xi}\left[\rho(\xi)\right]\leq\rho$ with $\rho$ and $\lambda\leq\frac{1}{\rho+2\eta}$ be two positive numbers. Let $\{x^k\}_{k\in \N}$ be the sequence generated by Sto-SubGrad  for solving problem \eqref{eq:stchastic_opt problem}. Then, the following hold:
\begin{itemize}
\item[{\rm(i)}] If $\alpha_k<\frac{1}{6\eta}$, it holds that
\begin{center}
$\mathbb{E}_{\xi_k}\|\hat{x}^k-x^{k+1}\|^2\leq\left(1-\frac{(1-\lambda(\rho+2\eta))\alpha_k}{\lambda}\right)\|\hat{x}^k-x^k\|^2+\underbrace{2\alpha_k^2\mathbb{E}_{\xi_k}\|g(x^k,\xi_k)+h(x^k)\|^2}_{T_1'}$.
\end{center}
\item[{\rm(ii)}] Similarly, it holds that
\begin{center}
$\mathbb{E}_{\xi_k}\|\hat{x}^k-x^{k+1}\|^2\leq\left(1-\frac{\left(1-\lambda\left(\rho+2\eta\right)\right)\alpha_k}{\lambda}\right)\|\hat{x}^k-x^k\|^2+\underbrace{2\alpha_k^2\mathbb{E}_{\xi_k}\|g(x^k,\xi_k)-g(x^{k+1},\xi_k)\|^2}_{T_2'}$.
\end{center}
\item[{\rm(iii)}] If $\alpha_k<\frac{1}{6\eta}$, it holds that
\begin{center}
$
\begin{aligned}
\mathbb{E}_{\xi_k}\dist^2(x^{k+1},\setX^*)\leq&(1+(\rho+2\eta)\alpha_k)\dist^2(x^k,\setX^*)\\
&-2\alpha_k\left[\varphi(x^k)-\varphi^*\right]+\underbrace{2\alpha_k^2\mathbb{E}_{\xi_k}\|g(x^k,\xi_k)+h(x^k)\|^2}_{T_1'}
\end{aligned}
$
\end{center}
\end{itemize}
\end{lemma}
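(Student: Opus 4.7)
The plan is to deduce all three parts from the generic recursion in \Cref{lemma:inequality_iter_s} by substituting three targeted choices for the free point $x$: for (i) and (ii) I would set $x = \hat{x}^k := \prox_{\lambda,\varphi}(x^k)$, while for (iii) I would set $x = \proj_{\setX^*}(x^k)$. This exactly parallels the derivation of \Cref{lemma:basic_recur_wcvx_hx} in the deterministic case, with conditional expectations $\mathbb{E}_{\xi_k}[\cdot]$ now carried through every step; the only extra ingredient beyond \Cref{lemma:inequality_iter_s} is the proximal inequality of \Cref{lemma:phi_lambda_1}.

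For part (i), plugging $x = \hat{x}^k$ into \Cref{lemma:inequality_iter_s}(i) produces an extra cross term $-2\alpha_k\bigl[\varphi(x^k) - \varphi(\hat{x}^k) - \tfrac{1}{2\lambda}\|\hat{x}^k - x^k\|^2\bigr]$. By the defining minimization of $\hat{x}^k$ (equivalently, \Cref{lemma:phi_lambda_1} with $z = x^k$), one obtains $\varphi(x^k) - \varphi(\hat{x}^k) \geq \tfrac{1}{2\lambda}\|\hat{x}^k - x^k\|^2$, so this cross term is nonpositive and can be dropped from the upper bound, producing exactly the claimed inequality with remainder $T_1'$.

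For part (ii), substituting $x = \hat{x}^k$ into \Cref{lemma:inequality_iter_s}(ii) generates the cross term $-2\alpha_k\bigl[\varphi(x^{k+1}) - \varphi(\hat{x}^k) - \tfrac{1}{2\lambda}\|\hat{x}^k - x^k\|^2\bigr]$ together with the auxiliary term $-\tfrac{\alpha_k}{\lambda}\mathbb{E}_{\xi_k}\|x^k - x^{k+1}\|^2$. Applying \Cref{lemma:phi_lambda_1} with $z = x^{k+1}$ gives $\varphi(x^{k+1}) - \varphi(\hat{x}^k) \geq \tfrac{1}{2\lambda}\bigl(\|\hat{x}^k - x^k\|^2 - \|x^{k+1} - x^k\|^2\bigr)$, so the cross term is at least $-\tfrac{1}{2\lambda}\|x^{k+1} - x^k\|^2$. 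Multiplying by $-2\alpha_k$ and taking expectations, its contribution to the upper bound becomes $+\tfrac{\alpha_k}{\lambda}\mathbb{E}_{\xi_k}\|x^{k+1} - x^k\|^2$, which cancels exactly against the auxiliary term and leaves the desired bound with remainder $T_2'$. For part (iii), I would set $x = x_p^k := \proj_{\setX^*}(x^k)$ in \Cref{lemma:inequality_iter_s}(i), using $\|x_p^k - x^k\|^2 = \dist^2(x^k, \setX^*)$, $\mathbb{E}_{\xi_k}\|x_p^k - x^{k+1}\|^2 \geq \mathbb{E}_{\xi_k}\dist^2(x^{k+1}, \setX^*)$, and $\varphi(x_p^k) = \varphi^*$; absorbing the $+\tfrac{\alpha_k}{\lambda}\dist^2(x^k, \setX^*)$ summand from the bracket into the leading coefficient gives $1 - \tfrac{(1 - \lambda(\rho+2\eta))\alpha_k}{\lambda} + \tfrac{\alpha_k}{\lambda} = 1 + (\rho + 2\eta)\alpha_k$, producing (iii) directly.

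The main technical obstacle is keeping track of signs and ensuring that the cancellation in (ii) is exact; once the proximal inequality of \Cref{lemma:phi_lambda_1} is invoked with the correct choice of $z$ and the conditional expectations are propagated, the argument is a mechanical stochastic counterpart of \Cref{lemma:basic_recur_wcvx_hx}, and all stepsize restrictions on $\alpha_k$ are inherited from \Cref{lemma:inequality_iter_s} without further adjustment.
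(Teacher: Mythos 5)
Your proposal is correct and follows exactly the route the paper intends: the paper omits this proof, stating it is "similar to" \Cref{lemma:basic_recur_wcvx_hx}, and your argument is a faithful stochastic transcription of that deterministic proof in Appendix C.2 — substituting $x=\hat{x}^k$ (resp.\ $x=\proj_{\setX^*}(x^k)$) into \Cref{lemma:inequality_iter_s}, invoking \Cref{lemma:phi_lambda_1} with $z=x^k$ for (i) and $z=x^{k+1}$ for (ii), and recombining the coefficient $-\tfrac{(1-\lambda(\rho+2\eta))\alpha_k}{\lambda}+\tfrac{\alpha_k}{\lambda}=(\rho+2\eta)\alpha_k$ for (iii). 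The sign bookkeeping and the exact cancellation of $\tfrac{\alpha_k}{\lambda}\mathbb{E}_{\xi_k}\|x^k-x^{k+1}\|^2$ in part (ii) match the paper's computation.
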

\begin{lemma}\label{lemma:profound_recur_wcvx_s}{\bf(Moreau envelope recursive of Sto-SubGrad relations 
under various subgradient upper bounding conditions)}
Suppose that $r$ is $\eta$-weakly convex with $\eta>0$, $f(\cdot,\xi)$ is $\rho(\xi)$-weakly convex with $\rho(\xi)>0$ and $\setE_{\xi\sim\Xi}\left[\rho(\xi)\right]\leq\rho$ with $\rho$ and $\lambda\leq\frac{1}{\rho+2\eta}$ be two positive numbers. Let $\{x^k\}_{k\in \N}$ be the sequence generated by Sto-SubGrad  for solving problem \eqref{eq:stchastic_opt problem}. Then, for the subgradient upper bounds scenarios {\rm(B$_{\mbox{\tiny D-D}}'$)} and {\rm(B$_3'$)}, we uniformly have the recursion with $\gamma_4, \gamma_5>0$ as
\be\label{eq:lemma_sto_6}
\mathbb{E}_{\xi_k}\left[\varphi_{\lambda}(x^{k+1})-\varphi^*\right]-\left(1+\gamma_4\alpha_k^2\right)[\varphi_{\lambda}(x^k)-\varphi^*]
\leq-\frac{(1-\lambda(\rho+2\eta))\alpha_k}{2}\|\nabla\varphi_{\lambda}({x}^k)\|^2+\gamma_5\alpha_k^2.
\ee
\end{lemma}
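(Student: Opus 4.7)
The plan is to mirror, term by term, the proof of Lemma~\ref{lemma:profound_recur_wcvx} in the deterministic setting, replacing the deterministic inequalities with their conditional-expectation counterparts from Lemma~\ref{lemma:basic_recur_wcvx_hx_s}. First, I would invoke Lemma~\ref{lemma:phi_lambda_2} with $x=x^k$ and $z=x^{k+1}$ (a deterministic inequality valid for every realization) to get
\[
\varphi_{\lambda}(x^{k+1})-\varphi_{\lambda}(x^k)\le\frac{1}{2\lambda}\bigl(\|\hat{x}^k-x^{k+1}\|^2-\|\hat{x}^k-x^k\|^2\bigr).
\]
Taking $\mathbb{E}_{\xi_k}[\cdot]$, substituting the one-step bound on $\mathbb{E}_{\xi_k}\|\hat{x}^k-x^{k+1}\|^2$ from Lemma~\ref{lemma:basic_recur_wcvx_hx_s}, and using the Moreau identity $\|\hat{x}^k-x^k\|^2=\lambda^2\|\nabla\varphi_{\lambda}(x^k)\|^2$ from Proposition~\ref{prop:wcvxf_me}(iv), one arrives at a unified estimate
\[
\mathbb{E}_{\xi_k}\bigl[\varphi_{\lambda}(x^{k+1})-\varphi_{\lambda}(x^k)\bigr]\le-\frac{(1-\lambda(\rho+2\eta))\alpha_k}{2}\|\nabla\varphi_{\lambda}(x^k)\|^2+\frac{T}{2\lambda},
\]
where $T$ equals $T_1'$ (when invoking part (i) of Lemma~\ref{lemma:basic_recur_wcvx_hx_s} with $\alpha_k<\frac{1}{6\eta}$) or $T_2'$ (when invoking part (ii) with $\alpha_k<\frac{1}{2(\rho+2\eta+1/\lambda)}$), depending on the scenario.

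Next, I would bound $T$ under each scenario and assemble a table analogous to that in the proof of Lemma~\ref{lemma:profound_recur_wcvx}. Under (B$_3'$), the expected subgradient upper bound gives directly $T_1'\le 2\alpha_k^2 c_3[\varphi_{\lambda}(x^k)-\varphi^*]+2\alpha_k^2 c_4$; after dividing by $2\lambda$ the first summand becomes $\frac{c_3\alpha_k^2}{\lambda}[\varphi_{\lambda}(x^k)-\varphi^*]$, which, when absorbed into the left-hand side, produces the $(1+\gamma_4\alpha_k^2)$ factor with $\gamma_4=c_3/\lambda$ and leaves $\gamma_5=c_4/\lambda$. Under (B$_{\mbox{\tiny D-D}}'$), I would split $\|g(x^k,\xi_k)-g(x^{k+1},\xi_k)\|^2\le 2\|g(x^k,\xi_k)\|^2+2\|g(x^{k+1},\xi_k)\|^2\le 4L_f(\xi_k)^2$ and then take $\mathbb{E}_{\xi_k}[\cdot]$ to get $T_2'\le 8L_f^2\alpha_k^2$; this yields the recursion with $\gamma_4=0$ and $\gamma_5=4L_f^2/\lambda$. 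Taking $\gamma_4$ and $\gamma_5$ large enough to dominate both cases gives the uniform recursion~\eqref{eq:lemma_sto_6}.

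The main obstacle is the (B$_{\mbox{\tiny D-D}}'$) estimate on $T_2'$: because $x^{k+1}$ depends on $\xi_k$ through the update rule, one cannot treat $g(x^{k+1},\xi_k)$ as independent of $\xi_k$, so the argument must use the pointwise-in-$\xi$ bound $\|g(x,\xi)\|\le L_f(\xi)$ (valid uniformly in $x$) and only then integrate against $\xi_k$, relying on $\mathbb{E}_{\xi_k}[L_f(\xi_k)^2]\le L_f^2$. The rest of the argument is bookkeeping: collecting the stepsize restrictions (which are the stochastic analogues of those appearing in the deterministic table), and verifying that the choice $\lambda\le\frac{1}{\rho+2\eta}$ keeps the leading coefficient of $\|\nabla\varphi_{\lambda}(x^k)\|^2$ negative, so that the desired recursion~\eqref{eq:lemma_sto_6} holds under both subgradient upper bound scenarios.
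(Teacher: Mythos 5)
Your proposal is correct and follows essentially the same route the paper intends: the paper omits this proof, stating it is analogous to that of Lemma~\ref{lemma:profound_recur_wcvx}, and your argument is precisely that analogue — combine Lemma~\ref{lemma:phi_lambda_2} with the conditional-expectation one-step bounds of Lemma~\ref{lemma:basic_recur_wcvx_hx_s}, then estimate $T_1'$ under (B$_3'$) and $T_2'$ under (B$_{\mbox{\tiny D-D}}'$) exactly as in the deterministic table. Your remark that $g(x^{k+1},\xi_k)$ cannot be decoupled from $\xi_k$ and must be bounded via the uniform-in-$x$ bound $\|g(x,\xi)\|\le L_f(\xi)$ before integrating is a correct and worthwhile clarification of a point the paper leaves implicit.
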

Finally, we establish the convergence and convergence rate of Sto-SubGrad when $r$ and $f(\cdot,\xi)$ are weakly convex and one of the stochastic subgradient upper bounding scenarios (B$_{\mbox{\tiny D-D}}'$) and (B$_3'$)-(B$_4'$) holds.

\begin{theorem}[Convergence analysis of Sto-SubGrad]
\label{theo:sto_convergence_wcvx} Suppose $r$ is $\eta$-weakly convex with $\eta\geq0$ and $f(\cdot,\xi)$ is $\rho(\xi)$-weakly convex with $\rho\geq\setE_{\xi\sim\Xi}\rho(\xi)\geq0$. Let $\{x^k\}_{k\in \N}$ be the sequence generated by Sto-SubGrad for solving problem \eqref{eq:stchastic_opt problem}. If one of stochastic subgradient upper bounds \{{\rm(B$_{\mbox{\tiny D-D}}'$), (B$_3'$), (B$_4'$)}\} holds, and the stepsize sequence $\{\alpha_k\}$ is a $\sigma$-sequence: $\alpha_k>0$, $\sum_{k\in \N} \alpha_k=\infty$, and $\sum_{k\in \N}\alpha_k^2\leq\bar{\mathfrak{a}}<\infty$, and $\lambda$ satisfies $\lambda<\frac{1}{\rho+2\eta}$, then the following statements hold true:
\begin{itemize}
\item[{\rm (i)}] If $\varphi$ is level bounded, then $\lim_{k\rightarrow\infty}\|\nabla\varphi_{\lambda}(x^k)\|=0$ almost surely and hence, every accumulation point of $\{x^k\}_{k\in \N}$ is a critical point of problem \eqref{eq:stchastic_opt problem} almost surely.
\item[{\rm (ii)}] $\min\limits_{0\leq k\leq T}\mathbb{E}_{\xi_{k-1}}\left[\|\nabla \varphi_{\lambda}({x}^k)\|^2\right]\leq\left(\frac{2}{1-\lambda(\rho+2\eta)}\right)\frac{\left[\varphi_{\lambda}(x^0)-\varphi^*\right]+\gamma_6\sum_{k=0}^T\alpha_k^2}{\sum_{k=0}^T\alpha_k}$\\
with $\gamma_6\geq\gamma_5+\gamma_4\left[\varphi_{\lambda}(x^0)-\varphi^*+\gamma_5\bar{\mathfrak{a}}\right]e^{1+\gamma_4\bar{\mathfrak{a}}}$. If $\alpha_k=\frac{\Delta_4}{\sqrt{T+1}}$, $k=0,1,...,T$, with $\Delta_4\geq0$, then $\min\limits_{0\leq k\leq T}\mathbb{E}_{\xi_{k-1}}\left[\|\nabla\varphi_{\lambda}({x}^k)\|^2\right]\leq R_3/\sqrt{T+1}$, with $R_3\geq0$.
\end{itemize}
\end{theorem}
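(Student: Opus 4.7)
The plan is to start from the Moreau envelope recursion \eqref{eq:lemma_sto_6} established in Lemma~\ref{lemma:profound_recur_wcvx_s}, which treats all three upper bounding scenarios in a unified way, and then to proceed along two parallel tracks: a supermartingale/Robbins--Siegmund argument for the almost-sure convergence in (i), and a deterministic unrolling (after taking total expectations) for the complexity bound in (ii).

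For (ii), I would first take total expectation in \eqref{eq:lemma_sto_6}, writing $a_k := \mathbb{E}[\varphi_\lambda(x^k)-\varphi^*]$, so that
\[
a_{k+1} \le (1+\gamma_4\alpha_k^2)\,a_k \;-\; \frac{(1-\lambda(\rho+2\eta))\alpha_k}{2}\,\mathbb{E}\|\nabla\varphi_\lambda(x^k)\|^2 \;+\; \gamma_5\alpha_k^2.
\]
Dropping the nonpositive middle term and iterating, the product $\prod_{j=0}^{k-1}(1+\gamma_4\alpha_j^2)$ is bounded by $\exp(\gamma_4\sum_j \alpha_j^2)\le e^{\gamma_4\bar{\mathfrak{a}}}$, which together with $\sum_j\alpha_j^2\le\bar{\mathfrak{a}}$ yields the uniform bound $a_k \le [\varphi_\lambda(x^0)-\varphi^*+\gamma_5\bar{\mathfrak{a}}]\,e^{\gamma_4\bar{\mathfrak{a}}}$. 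Plugging this uniform upper bound back into the defect term $\gamma_4\alpha_k^2 a_k$ produces a constant $\gamma_6\ge\gamma_5+\gamma_4[\varphi_\lambda(x^0)-\varphi^*+\gamma_5\bar{\mathfrak{a}}]e^{1+\gamma_4\bar{\mathfrak{a}}}$ absorbing both the multiplicative and additive contributions. Telescoping from $0$ to $T$ and using $a_{T+1}\ge 0$ then gives
\[
\frac{1-\lambda(\rho+2\eta)}{2}\sum_{k=0}^{T}\alpha_k\,\mathbb{E}\|\nabla\varphi_\lambda(x^k)\|^2 \le [\varphi_\lambda(x^0)-\varphi^*] + \gamma_6\sum_{k=0}^{T}\alpha_k^2,
\]
and the min-bound stated in (ii) follows by the standard $\min\le$average inequality. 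The constant-stepsize corollary $R_3/\sqrt{T+1}$ is obtained by substituting $\alpha_k=\Delta_4/\sqrt{T+1}$.

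For (i), I would apply the Robbins--Siegmund supermartingale convergence theorem directly to \eqref{eq:lemma_sto_6}, with $V_k := \varphi_\lambda(x^k)-\varphi^*\ge 0$ (nonnegativity follows from Corollary~\ref{cor:me}), nonnegative multiplicative perturbation $\gamma_4\alpha_k^2$, additive noise $\gamma_5\alpha_k^2$, and the nonnegative ``progress'' term $\tfrac12(1-\lambda(\rho+2\eta))\alpha_k\|\nabla\varphi_\lambda(x^k)\|^2$; since $\sum\alpha_k^2<\infty$, Robbins--Siegmund yields that $V_k$ converges almost surely and
\[
\sum_{k=0}^{\infty}\alpha_k\|\nabla\varphi_\lambda(x^k)\|^2 < \infty \quad \text{a.s.}
\]
Combined with $\sum_k\alpha_k=\infty$, this forces $\liminf_k\|\nabla\varphi_\lambda(x^k)\|=0$ almost surely. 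Upgrading $\liminf$ to $\lim$ is the step where I have to be careful: using part (iii) of Lemma~\ref{lemma:inequality_iter_s} I have $\|x^{k+1}-x^k\|\le\alpha_k\|g(x^k,\xi_k)+h(x^k)\|$, and under level-boundedness of $\varphi$ the iterates stay in a bounded set almost surely (since $V_k$ converges), so the expected subgradient bounds (B$'_{\mbox{\tiny D-D}}$)/(B$_3'$)/(B$_4'$) imply $\|x^{k+1}-x^k\|\to 0$. The Lipschitz continuity of $\nabla\varphi_\lambda$ (Proposition~\ref{prop:wcvxf_me}(iv)) then turns slow increments in $x^k$ into slow increments in $\|\nabla\varphi_\lambda(x^k)\|$, so an application of Lemma~\ref{lemma:rcs} (the same ``slow variation plus summability'' lemma invoked in Theorem~\ref{theo:convergence_wcvx}) upgrades $\liminf=0$ to $\lim=0$. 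Finally, $\|\nabla\varphi_\lambda(x^k)\|=\tfrac{1}{\lambda}\|x^k-\hat{x}^k\|\to 0$ together with upper semicontinuity of $\partial\varphi+\mathcal{N}_{\setX}$ and Corollary~\ref{cor:mecritical} identifies every cluster point of $\{x^k\}$ as a critical point.

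The main obstacle is the passage from $\liminf=0$ to $\lim=0$ in the stochastic setting: this requires simultaneously (a) boundedness of $\{x^k\}$ almost surely, which comes from the a.s.\ convergence of $V_k$ together with level-boundedness; (b) an almost-sure bound on $\|g(x^k,\xi_k)+h(x^k)\|$, which is not immediate from an expected-value upper bound and which is why the level-boundedness hypothesis in (i) is essential; and (c) a careful invocation of Lemma~\ref{lemma:rcs} along almost every sample path. The rest of the argument is a direct stochastic analogue of the reasoning already used in Theorem~\ref{theo:convergence_wcvx}, so the unified Moreau-envelope recursion does most of the work.
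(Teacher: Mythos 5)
Your proposal is correct and follows essentially the same route as the paper: the unified recursion \eqref{eq:lemma_sto_6} of \Cref{lemma:profound_recur_wcvx_s} feeds the Robbins--Siegmund theorem (\Cref{thm:RS}) together with \Cref{lemma:rcs} for part (i), and a product/telescoping bound on $\mathbb{E}[\varphi_{\lambda}(x^k)-\varphi^*]$ (exactly the content of \Cref{lemma:Polyak_extend}(ii)) for part (ii). The one step you rightly flag as delicate --- obtaining a pathwise bound $\|x^k-x^{k+1}\|\leq M'(w)\alpha_k$ rather than only an expected one --- is resolved in the paper as you suspect: not via the expected subgradient bounds but via the weak convexity of $f$ and $r$ (local boundedness of their subgradients on the almost-surely bounded iterate set) combined with \Cref{lemma:inequality_iter_s}(iii).
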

\begin{proof}
\noindent{\rm(i)} Recall \eqref{eq:lemma_sto_6} in~\Cref{lemma:profound_recur_wcvx_s}:
\be\label{eq:dimRS_wcvx_2}
\mathbb{E}_{\xi_k}\left[\varphi_{\lambda}(x^{k+1})-\varphi^*\right]-\left(1+\gamma_4\alpha_k^2\right)[\varphi_{\lambda}(x^k)-\varphi^*]\leq-\frac{(1-\lambda(\rho+2\eta))\alpha_k}{2}\|\nabla\varphi_{\lambda}({x}^k)\|^2+\gamma_5\alpha_k^2.
\ee
Note that $\varphi_\lambda(x)-\varphi^*\geq 0$, $\|\nabla\varphi_{\lambda}(x^k)\|^2\geq0$, and $\sum_{k=0}^{\infty}\alpha_k^2<\infty$. Applying \Cref{thm:RS} on~\eqref{eq:dimRS_wcvx_2} yields that $\lim_{k\rightarrow\infty}\left(\varphi_\lambda(x^k)-\varphi^*\right)$ almost surely exists and is finite. By the definition of $\varphi_\lambda$, we have $\{\varphi(\prox_{\lambda,\varphi}(x^k)) + \frac{1}{2\lambda} \|x^k - \prox_{\lambda,\varphi}(x^k)\|^2 -\varphi^*\}_{k\in \N}$ is almost surely bounded. By the level boundedness of $\varphi$, we obtain that $\{x^k\}_{k\in \N}$ is almost surely bounded.

Next, combining the weakly convexity of $f$ and $r$, the almost surely boundedness of $\{x^k\}$, and statement (iii) of~\Cref{lemma:inequality_iter_s} leads to the event $\Omega_1$ defined as follows:
\[
\mathbb{P}\left(\left\{w\in\Omega_1:\|x^k(w)-x^{k+1}(w)\|\leq M'(w)\alpha_k, M'(w)>0\right\}\right)=1.
\]
Applying \Cref{thm:RS} and~\eqref{eq:dimRS_wcvx_2} also gives rise to event $\Omega_2$ as follows:
\[
\mathbb{P}\left(\left\{w\in\Omega_2: \sum_{k=0}^{\infty}\alpha_k\|\nabla\varphi_{\lambda}(x^k(w))\|^2<\infty\right\}\right)=1.
\]
Note that $\sum_{k=0}^{\infty}\alpha_k=\infty$ and $\nabla\varphi_{\lambda}$ is Lipschitz continuous. By~\Cref{lemma:rcs}, we have
\[
\lim_{k\rightarrow\infty}\|\nabla\varphi_{\lambda}(x^k(w))\|=0, \, \forall w\in \Omega_1 \cap \Omega_2.
\]
It is easy to see that $\mathbb{P}\left(\Omega_1\cap\Omega_2\right)=1$. Hence, we have $\lim_{k\rightarrow\infty}\|\nabla\varphi_{\lambda}(x^k)\|=0$ almost surely. Invoking \Cref{cor:mecritical}, we conclude that every accumulation point of $\{x^k\}_{k\in \N}$ is a critical point of~\eqref{eq:stchastic_opt problem} almost surely.

\noindent{\rm(ii)} Taking expectation $\mathbb{E}_{\xi_{k-1}}$ of~\eqref{eq:lemma_sto_6} in~\Cref{lemma:profound_recur_wcvx_s}, we obtain
\be\label{eq:rate_sto_wcvx_1}
\begin{aligned}
&\mathbb{E}_{\xi_k}\left[\varphi_{\lambda}(x^{k+1})-\varphi^*\right]-\left(1+\gamma_4\alpha_k^2\right)\mathbb{E}_{\xi_{k-1}}[\varphi_{\lambda}(x^k)-\varphi^*]\\
\leq&-\frac{(1-\lambda(\rho+2\eta))\alpha_k}{2}\mathbb{E}_{\xi_{k-1}}\|\nabla\varphi_{\lambda}({x}^k)\|^2+\gamma_5\alpha_k^2.
\end{aligned}
\ee
Following Statement (ii) in~\Cref{lemma:Polyak_extend} and~\eqref{eq:rate_sto_wcvx_1}, the term $\mathbb{E}_{\xi_{k-1}}\left[\varphi_{\lambda}(x^{k})-\varphi^*\right]$ can be upper bounded by the positive number $\left[\varphi_{\lambda}(x^0)-\varphi^*+\gamma_5\bar{\mathfrak{a}}\right]e^{1+\gamma_4\bar{\mathfrak{a}}}$. Observe further that~\eqref{eq:rate_sto_wcvx_1} gives rise to
\be\label{eq:rate_sto_wcvx_2}
\begin{aligned}
\mathbb{E}_{\xi_k}\left[\varphi_{\lambda}(x^{k+1})-\varphi^*\right]-\mathbb{E}_{\xi_{k-1}}[\varphi_{\lambda}(x^k)-\varphi^*]\leq-\frac{(1-\lambda(\rho+2\eta))\alpha_k}{2}\mathbb{E}_{\xi_{k-1}}\|\nabla\varphi_{\lambda}({x}^k)\|^2+\gamma_6\alpha_k^2
\end{aligned}
\ee
with $\gamma_6\geq\gamma_5+\gamma_4\left[\varphi_{\lambda}(x^0)-\varphi^*+\gamma_5\bar{\mathfrak{a}}\right]e^{1+\gamma_4\bar{\mathfrak{a}}}$.
Recursive relation \eqref{eq:rate_sto_wcvx_2} yields
\[
\min_{0\leq k\leq T}\mathbb{E}_{\xi_{k-1}}\left[\|\nabla \varphi_{\lambda}({x}^k)\|^2\right]\leq\left(\frac{2}{1-\lambda(\rho+2\eta)}\right)\frac{\left[\varphi_{\lambda}(x^0)-\varphi^*\right]+\gamma_6\sum_{k=0}^T\alpha_k^2}{\sum_{k=0}^T\alpha_k}.
\]
If $\alpha_k=\frac{\Delta_4}{\sqrt{T+1}}$, $k=0,1,...,T$, we obtain that $
\min_{0\leq k\leq T}\mathbb{E}_{\xi_{k-1}}\left[\|\nabla \varphi_{\lambda}({x}^k)\|^2\right]\leq\frac{R_3}{\sqrt{T+1}}$.
\end{proof}
\begin{corollary}\label{cor:sto_G}
Suppose the assumptions of~\Cref{theo:sto_convergence_wcvx} hold. Then, for any given positive non-decreasing function $G:\R_+\rightarrow\R_+$ with the property that $G(t)=t$ for $0\leq t<\delta$ and $G(t)>t$ for $t\geq\delta$ with $\delta>0$, we always have $\min\limits_{0\leq k\leq T}\mathbb{E}_{\xi_{k-1}}\left[\|\nabla \varphi_{\lambda}({x}^k)\|^2\right]\leq G(R_3/\sqrt{T+1})$.
\end{corollary}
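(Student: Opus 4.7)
The plan is to deduce the bound from statement (ii) of \Cref{theo:sto_convergence_wcvx} combined with a trivial property of the envelope function $G$. Since this corollary is the direct stochastic analogue of \Cref{cor:convergence_wcvx}, the argument should mirror that one almost verbatim.

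First, I would invoke statement (ii) of \Cref{theo:sto_convergence_wcvx} with the specified choice $\alpha_k = \Delta_4/\sqrt{T+1}$ to conclude that there exists $R_3>0$ such that
\[
\min_{0\leq k\leq T}\mathbb{E}_{\xi_{k-1}}\bigl[\|\nabla\varphi_{\lambda}(x^k)\|^2\bigr] \;\leq\; \frac{R_3}{\sqrt{T+1}}.
\]
Next, I would extract from the hypothesis on $G$ the elementary fact that $G(t)\geq t$ for every $t\geq 0$: when $0\leq t<\delta$, this is the identity $G(t)=t$, and when $t\geq\delta$, this is the strict inequality $G(t)>t$. Applying this pointwise inequality at $t = R_3/\sqrt{T+1}$ gives $R_3/\sqrt{T+1}\leq G\!\left(R_3/\sqrt{T+1}\right)$, and chaining with the previous display yields the claim.

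There is no genuine obstacle here; the statement is an envelope-style restatement of the already-established $O(1/\sqrt{T})$ iteration complexity for Sto-SubGrad, and its purpose is analogous to \Cref{cor:convergence_wcvx}: to emphasize that the complexity bound is in fact sharper than $R_3/\sqrt{T+1}$ in the regime where $\min_{0\leq k\leq T}\mathbb{E}_{\xi_{k-1}}[\|\nabla\varphi_{\lambda}(x^k)\|^2]$ is not yet small, since one may take any non-decreasing $G$ dominating the identity beyond $\delta$ (for example $G(t)=t^2/\delta$ on $[\delta,\infty)$) and obtain a correspondingly faster rate of decrease of the bounding function in the pre-$\delta$ range.
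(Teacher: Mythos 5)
Your proposal is correct and matches the paper's (implicit) argument exactly: the bound $\min_{0\leq k\leq T}\mathbb{E}_{\xi_{k-1}}[\|\nabla\varphi_{\lambda}(x^k)\|^2]\leq R_3/\sqrt{T+1}$ from statement (ii) of \Cref{theo:sto_convergence_wcvx}, chained with the elementary observation $t\leq G(t)$ for all $t\geq 0$, is precisely how the paper treats this corollary (and its deterministic twin, \Cref{cor:convergence_wcvx}). Nothing is missing.
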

Furthermore, an additional global KL property with exponent 1/2 leads to an $O(1/k)$-rate when $f(\cdot,\xi)$ and $r$ are weakly convex, as shown below.
\begin{theorem}[Further convergence analysis of Sto-SubGrade] 
\label{theo:sto_convergence_wcvx_qg} Suppose that the assumptions of~\Cref{theo:sto_convergence_wcvx} hold. Moreover, suppose that $\varphi$ satisfies the global KL property with exponent $1/2$ and parameter $\kappa_{gKL}^{\varphi}>0$. Then, we have:
\begin{itemize}
\item[{\rm (i)}] $\lim_{k\rightarrow\infty}\varphi_{\lambda}(x^k)=\varphi^*$ holds almost surely.
\item[{\rm (ii)}] $\mathbb{E}_{\xi_{k-1}}\left[\varphi_{\lambda}(x^k)-\varphi^*\right]=O(1/k)$, $k\in\N$.
\end{itemize}
\end{theorem}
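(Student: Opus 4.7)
The plan is to combine the Moreau envelope recursion of~\Cref{lemma:profound_recur_wcvx_s}, namely
\[
\mathbb{E}_{\xi_k}\left[\varphi_{\lambda}(x^{k+1})-\varphi^*\right] \le (1+\gamma_4\alpha_k^2)\,[\varphi_{\lambda}(x^k)-\varphi^*]-\tfrac{(1-\lambda(\rho+2\eta))\alpha_k}{2}\|\nabla\varphi_{\lambda}(x^k)\|^2+\gamma_5\alpha_k^2,
\]
with the transfer of the global KL property to the Moreau envelope given by~\Cref{prop:global_eb}(i): there exists $\kappa_{gKL}^{\varphi_{\lambda}}>0$ (in fact $\kappa_{gKL}^{\varphi_{\lambda}}=\kappa_{gKL}^{\varphi}+\lambda/2$) such that
\[
\varphi_{\lambda}(x)-\varphi^*\le \kappa_{gKL}^{\varphi_{\lambda}}\|\nabla\varphi_{\lambda}(x)\|^{2},\quad\forall x\in\setX.
\]
Substituting the lower bound $\|\nabla\varphi_{\lambda}(x^k)\|^{2}\ge (\varphi_{\lambda}(x^k)-\varphi^*)/\kappa_{gKL}^{\varphi_{\lambda}}$ into the above recursion yields the master inequality
\[
\mathbb{E}_{\xi_k}\left[\varphi_{\lambda}(x^{k+1})-\varphi^*\right] \le \bigl(1-\beta_1\alpha_k+\gamma_4\alpha_k^2\bigr)\,[\varphi_{\lambda}(x^k)-\varphi^*]+\gamma_5\alpha_k^2,
\]
with $\beta_1:=(1-\lambda(\rho+2\eta))/(2\kappa_{gKL}^{\varphi_{\lambda}})>0$. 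Both (i) and (ii) follow from this single inequality.

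For part (i), I would apply the Robbins–Siegmund supermartingale convergence theorem (the same \Cref{thm:RS} used in the proof of~\Cref{theo:sto_convergence_wcvx}) to the nonnegative sequence $\{\varphi_{\lambda}(x^k)-\varphi^*\}$. Because the stepsize is a $\sigma$-sequence, the multiplicative perturbation $\gamma_4\alpha_k^2$ and the additive noise $\gamma_5\alpha_k^2$ are both summable. Robbins–Siegmund then gives that $\varphi_{\lambda}(x^k)-\varphi^*$ converges almost surely and that $\sum_{k}\beta_1\alpha_k[\varphi_{\lambda}(x^k)-\varphi^*]<\infty$ almost surely. Since $\sum_k\alpha_k=\infty$, the almost sure limit is forced to be zero, proving (i).

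For part (ii), I would take full expectation of the master inequality and write $a_k:=\mathbb{E}_{\xi_{k-1}}[\varphi_{\lambda}(x^k)-\varphi^*]$, producing the deterministic recursion
\[
a_{k+1}\le \bigl(1-\beta_1\alpha_k+\gamma_4\alpha_k^2\bigr)a_k+\gamma_5\alpha_k^2.
\]
Choosing $\alpha_k=\Delta_5/k$ with $\Delta_5$ large enough that $\beta_1\Delta_5>1$, this becomes $a_{k+1}\le (1-c/k+O(1/k^2))a_k+O(1/k^2)$ with $c>1$, which matches the hypothesis of~\Cref{lemma:Polyak1} employed in the proof of~\Cref{cor:qg_wcvx}; that lemma immediately delivers $a_k=O(1/k)$.

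The main obstacle is ensuring that the $1/k$ stepsize is compatible with all standing assumptions from~\Cref{theo:sto_convergence_wcvx}: it must be a $\sigma$-sequence (which $\{\Delta_5/k\}$ is) and satisfy the bound $\alpha_k<1/(6\eta)$ required by~\Cref{lemma:inequality_iter_s}. The latter holds for all sufficiently large $k$, so a standard warm-up argument (clipping $\alpha_k$ by a constant for small $k$, or starting the analysis from some $k_0$) absorbs the finitely many initial iterations into a harmless transient constant and does not affect the asymptotic $O(1/k)$ rate. A minor bookkeeping point is that when converting the master inequality into the form of~\Cref{lemma:Polyak1}, one must verify that $\beta_1\Delta_5 - 1$ remains strictly positive after absorbing the $\gamma_4\alpha_k^2$ term, which is straightforward for large $k$.
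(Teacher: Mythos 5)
Your proposal is correct and its core is the same as the paper's: transfer the global KL property from $\varphi$ to $\varphi_{\lambda}$ via \Cref{prop:global_eb}(i), feed the resulting inequality $\varphi_{\lambda}(x)-\varphi^*\leq\kappa_{gKL}^{\varphi_{\lambda}}\|\nabla\varphi_{\lambda}(x)\|^2$ into the Moreau-envelope recursion of \Cref{lemma:profound_recur_wcvx_s}, and close with \Cref{lemma:Polyak1}. Two genuine differences are worth noting. First, for part (i) the paper simply cites statement (i) of \Cref{theo:sto_convergence_wcvx} (which requires level-boundedness of $\varphi$) to get $\|\nabla\varphi_{\lambda}(x^k)\|\to0$ a.s.\ and then applies the KL inequality; you instead run Robbins--Siegmund (\Cref{thm:RS}) directly on the KL-enhanced recursion, obtaining $\sum_k\alpha_k[\varphi_{\lambda}(x^k)-\varphi^*]<\infty$ a.s.\ and hence a zero limit since $\sum_k\alpha_k=\infty$. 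Your route is more self-contained and does not lean on level-boundedness, which is a mild strengthening. Second, you carry the multiplicative $(1+\gamma_4\alpha_k^2)$ factor into the rate analysis and absorb it asymptotically, whereas the paper first uniformly bounds $\mathbb{E}_{\xi_{k-1}}[\varphi_{\lambda}(x^k)-\varphi^*]$ via \Cref{lemma:Polyak_extend}(ii) and folds $\gamma_4\alpha_k^2$ into the additive constant $\gamma_6\alpha_k^2$, yielding the cleaner recursion with coefficient $1$; both work. One quantitative caution: \Cref{lemma:Polyak1} as stated requires the contraction factor $\bigl(1-\tfrac{2}{k}\bigr)$, so you need the effective constant $\beta_1\Delta_5$ (net of the $O(1/k^2)$ perturbation) to be at least $2$, not merely $>1$ as you wrote --- this is exactly why the paper takes $\Delta_5>4\kappa_{gKL}^{\varphi_{\lambda}}/(1-\lambda(\rho+2\eta))$; alternatively one could invoke the standard $c>1$ version of that comparison lemma, but that is not the version proved in the appendix. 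Your warm-up remark about $\alpha_k<1/(6\eta)$ for small $k$ is a legitimate point that the paper glosses over.
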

\begin{proof}
\noindent{\rm(i)} By~\Cref{prop:global_eb}, $\varphi$ satisfies the global KL property with exponent $1/2$ and parameter $\kappa_{gKL}^{\varphi}>0$ yields that $\varphi_{\lambda}$ satisfies the global KL inequality with parameter $\kappa_{gKL}^{\varphi_{\lambda}}>0$, we have $\varphi_{\lambda}(x)-\varphi^*\leq\kappa_{gKL}^{\varphi_{\lambda}}\|\nabla\varphi_{\lambda}(x)\|^2$. By Statement (i) of~\Cref{theo:sto_convergence_wcvx}, we conclude that $\lim_{k\rightarrow\infty}\|\nabla\varphi_{\lambda}(x^k)\|=0$ holds almost surely. Therefore, $\lim_{k\rightarrow\infty}\varphi_{\lambda}(x^k)=\varphi^*$.

\noindent{\rm(ii)} Since $\varphi_{\lambda}$ satisfies the global KL property with exponent $1/2$ and parameter $\kappa_{gKL}^{\varphi_{\lambda}}>0$, we have $\varphi_{\lambda}(x)-\varphi^*\leq\kappa_{gKL}^{\varphi_{\lambda}} \|\nabla\varphi_{\lambda}(x)\|^2$. The recursive relation~\eqref{eq:rate_sto_wcvx_2} yields $\mathbb{E}_{\xi_k}\left[\varphi_{\lambda}(x^{k+1})-\varphi^*\right]-\left(1-\frac{\alpha_k(1-\lambda(\rho+2\eta))}{2\kappa_{gKL}^{\varphi_{\lambda}}}\right)\mathbb{E}_{\xi_{k-1}}[\varphi_{\lambda}(x^k)-\varphi^*]\leq\gamma_6\alpha_k^2$.
Letting $\alpha_k=\frac{\Delta_5}{k}$ with $\Delta_5>\frac{4\kappa_{gKL}^{\varphi_{\lambda}}}{1-\lambda(\rho+2\eta)}$, we have $\mathbb{E}_{\xi_k}\left[\varphi_{\lambda}(x^{k+1})-\varphi^*\right]-\left(1-\frac{2}{k}\right)\mathbb{E}_{\xi_{k-1}}[\varphi_{\lambda}(x^k)-\varphi^*]\leq\frac{\gamma_6\Delta_5^2}{k^2}$. By~\Cref{lemma:Polyak1}, the desired result follows.
\end{proof}

In the next theorem, we are going to derive the linear convergence analysis of Sto-SubGrad. The proof of the following theorem is similar to the statement (i-1) and (i-2) of~\Cref{cor:qg_wcvx_2}. For the sake of brevity, we omit the proof of the following theorem.
\begin{theorem}[Linear convergence of Sto-SubGrad]\label{cor:qg_wcvx_2_sto} Suppose that $r$ is $\eta$-weakly convex with $\eta>0$, $f(\cdot,\xi)$ is $\rho(\xi)$-weakly convex with $\rho(\xi)>0$ and $\setE_{\xi\sim\Xi}\left[\rho(\xi)\right]\leq\rho$ with $\rho$ and $\lambda\leq\frac{1}{\rho+2\eta}$ be two positive numbers. Let $\{x^k\}_{k\in \N}$ be the sequence generated by Sto-SubGrad  for solving problem \eqref{eq:stchastic_opt problem}. Suppose further $\varphi$ satisfies the global quadratic growth  property with $\kappa_{gQG}^{\varphi}>0$.
If one of the following two conditions holds:
\begin{itemize}
\item[{\rm (i)}] $\kappa_{gQG}^{\varphi}<\frac{2}{\rho+2\eta}$, and the subgradient upper bound condition {\rm(B$_{\mbox{\tiny {A-D}}}'$)} holds with $c_{10}=0$, and we select the constant stepsize as $\alpha_k=\alpha\in\left(0,\min\left\{\frac{2-\kappa_{gQG}^{\varphi}(\rho+2\eta)}{2c_9^2\kappa_{gQG}^{\varphi}},\frac{1}{6\eta}\right\}\right)$.
\item[{\rm (ii)}] $\kappa_{gQG}^{\varphi}<\frac{2}{\rho+2\eta}$, and the subgradient upper bound condition {\rm(B$_{\mbox{\tiny {G}}}'$)} holds with $c_{12}=0$, and we select
 the constant stepsize as$\alpha_k=\alpha\in\left(0,\min\left\{\frac{2-\kappa_{gQG}^{\varphi}(\rho+2\eta)}{2c_{11}(\rho+\eta+c_{11})\kappa_{gQG}^{\varphi}},\frac{1}{6\eta}\right\}\right)$.
\end{itemize}
Then, the linear convergence rate holds:  $\mathbb{E}_{\xi_k}\dist^2(x^{k+1},\setX^*)
\leq\mu\mathbb{E}_{\xi_{k-1}}\dist^2(x^{k},\setX^*)$, with $\mu<1$.
\end{theorem}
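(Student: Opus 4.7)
The plan is to mirror the proof of \Cref{cor:qg_wcvx_2}(i.1)--(i.2), but with each deterministic one-step inequality replaced by its conditional-expectation counterpart. The starting point is part (iii) of \Cref{lemma:basic_recur_wcvx_hx_s}, namely
\[
\mathbb{E}_{\xi_k}\dist^2(x^{k+1},\setX^*)\leq (1+(\rho+2\eta)\alpha_k)\dist^2(x^k,\setX^*)-2\alpha_k[\varphi(x^k)-\varphi^*]+2\alpha_k^2\mathbb{E}_{\xi_k}\|g(x^k,\xi_k)+h(x^k)\|^2,
\]
which is valid whenever $\alpha_k<1/(6\eta)$; the prescribed stepsize intervals in both (i) and (ii) are small enough to ensure this.

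For case (i), I would apply the expected subgradient upper bound (B$_{\mbox{\tiny {A-D}}}'$) with $c_{10}=0$ to the last term, giving $\mathbb{E}_{\xi_k}\|g(x^k,\xi_k)+h(x^k)\|^2\leq c_9\dist^2(x^k,\setX^*)$, and then use the global quadratic growth hypothesis to lower bound $\varphi(x^k)-\varphi^*\geq \dist^2(x^k,\setX^*)/\kappa_{gQG}^{\varphi}$. Substituting these two estimates into the basic recursion and collecting terms yields
\[
\mathbb{E}_{\xi_k}\dist^2(x^{k+1},\setX^*)\leq\Bigl[1-\alpha_k\Bigl(\tfrac{2}{\kappa_{gQG}^{\varphi}}-(\rho+2\eta)\Bigr)+2c_9\alpha_k^2\Bigr]\dist^2(x^k,\setX^*).
\]
The assumption $\kappa_{gQG}^{\varphi}<2/(\rho+2\eta)$ forces the coefficient $\tfrac{2}{\kappa_{gQG}^{\varphi}}-(\rho+2\eta)$ to be strictly positive, and the prescribed upper bound on the constant stepsize $\alpha$ ensures that the quadratic term $2c_9\alpha^2$ does not overwhelm the linear one, so the bracket lies in $(0,1)$, giving the claimed contraction with some $\mu<1$.

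For case (ii), the cleanest route is to reduce to case (i) via \Cref{prop:bad-s}(iii): the implication (B$_{\mbox{\tiny {G}}}'$) $\Rightarrow$ (B$_{\mbox{\tiny {A-D}}}'$) produced there, specialized to $c_{12}=0$, provides (B$_{\mbox{\tiny {A-D}}}'$) with constants $c_9=c_{11}(\rho+\eta+c_{11})$ and $c_{10}=0$. Plugging this value of $c_9$ into the stepsize interval produced in case (i) delivers exactly the admissible range stated in (ii), and the linear contraction $\mathbb{E}_{\xi_k}\dist^2(x^{k+1},\setX^*)\leq\mu\,\mathbb{E}_{\xi_{k-1}}\dist^2(x^k,\setX^*)$ follows by taking a further expectation over $\xi_{k-1}$.

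The main obstacle is purely bookkeeping: one must carefully track the constants that come out of \Cref{prop:bad-s} in the stochastic, squared-norm formulation and verify that the bracket $1-\alpha(\tfrac{2}{\kappa_{gQG}^{\varphi}}-(\rho+2\eta))+2c_9\alpha^2$ is indeed in $(0,1)$ throughout the stated interval of $\alpha$. Beyond this, no new technical device is required, since \Cref{lemma:basic_recur_wcvx_hx_s} and \Cref{prop:bad-s} already encapsulate the weakly convex stochastic machinery needed.
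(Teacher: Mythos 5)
Your proposal follows exactly the route the paper intends: the authors omit this proof, stating only that it is ``similar to the statement (i-1) and (i-2) of~\Cref{cor:qg_wcvx_2}'', and your argument---statement (iii) of~\Cref{lemma:basic_recur_wcvx_hx_s} combined with {\rm(B$_{\mbox{\tiny {A-D}}}'$)}/{\rm(B$_{\mbox{\tiny {G}}}'$)} and global quadratic growth, followed by a total expectation---is precisely that adaptation. The only caveat is the one you flag yourself: since {\rm(B$_{\mbox{\tiny {A-D}}}'$)} bounds the \emph{expected squared} norm, the quadratic term is $2c_9\alpha^2$ rather than $2c_9^2\alpha^2$, so the admissible range in case (i) should read $\alpha<\frac{2-\kappa_{gQG}^{\varphi}(\rho+2\eta)}{2c_9\kappa_{gQG}^{\varphi}}$; this is a constant-tracking discrepancy in the paper's own statement (inherited from the deterministic version, and indeed absent from its case (ii)), not a gap in your proof.
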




{\small
\appendix
\section{Several technical lemmas regarding convergent sequences}
We now introduce some useful lemmas that enable our convergence analysis for Prox-SubGrad.
\begin{lemma}\label{lemma:Polyak_extend}
Suppose $\{u^k\}$ and $\{v^k\}$ be two sequences of positive numbers, and
\noindent{$u^{k+1}\leq(1+\mu_k)u^k-v^k+\nu_k,\quad\mu_k\geq0,\;\nu_k\geq0,\;\sum_{k=0}^{\infty}\mu_k\leq\bar{\mu}<\infty,\;\sum_{k=0}^{\infty}\nu_k\leq\bar{\nu}<\infty$.}

\noindent{Then, {\rm(i)} $u^k\rightarrow u\geq0$;}
\quad{\rm(ii)} $u^{k+1}\leq\left(u^0+\bar\nu\right)e^{1+\bar\mu}$.
\end{lemma}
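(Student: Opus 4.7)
My plan is to establish (ii) first by a direct unrolling of the recursion, and then to deduce (i) from a standard reweighting argument that reduces the recursion to a monotone-convergence setting.

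For part (ii), I would discard the nonpositive term $-v^k$ to obtain $u^{k+1}\le(1+\mu_k)u^k+\nu_k$, and then unroll this linear recursion to
\[
u^{k+1}\le\Bigl(\prod_{j=0}^{k}(1+\mu_j)\Bigr)u^0+\sum_{j=0}^{k}\nu_j\prod_{i=j+1}^{k}(1+\mu_i).
\]
Using $1+x\le e^x$ for $x\ge 0$, each of the finite products is bounded by $\exp\bigl(\sum_{j=0}^{\infty}\mu_j\bigr)\le e^{\bar\mu}$, so collecting terms gives $u^{k+1}\le e^{\bar\mu}(u^0+\bar\nu)\le(u^0+\bar\nu)e^{1+\bar\mu}$, which is the claimed bound.

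For part (i), the plan is to renormalize by the cumulative product. Define $\alpha^k:=\prod_{j=0}^{k-1}(1+\mu_j)$ with $\alpha^0:=1$. Since $\sum_{j}\mu_j\le\bar\mu<\infty$, $\alpha^k$ is nondecreasing in $k$, lies in $[1,e^{\bar\mu}]$, and converges to some finite limit $\alpha^\infty\in[1,e^{\bar\mu}]$. Dividing the recursion by $\alpha^{k+1}>0$ and setting $\tilde u^k:=u^k/\alpha^k$, $\tilde v^k:=v^k/\alpha^{k+1}$, $\tilde\nu^k:=\nu_k/\alpha^{k+1}$ yields
\[
\tilde u^{k+1}\le\tilde u^k-\tilde v^k+\tilde\nu^k,
\]
where $\tilde u^k,\tilde v^k,\tilde\nu^k\ge 0$ and $\sum_{k\ge 0}\tilde\nu^k\le\bar\nu<\infty$. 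Setting $S^k:=\sum_{j\ge k}\tilde\nu^j$, one checks that $\tilde u^k+S^k$ is nonnegative and nonincreasing in $k$, and therefore converges to some nonnegative limit. Since $S^k\to 0$, the sequence $\tilde u^k$ itself converges to some $\tilde u^\infty\ge 0$, and multiplying by the convergent factor $\alpha^k$ gives $u^k\to u:=\alpha^\infty\tilde u^\infty\ge 0$, as required.

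The argument is essentially a routine adaptation of the classical deterministic Robbins--Siegmund / Polyak lemma, so I do not anticipate any genuine obstacle. The only care needed is in verifying that the renormalized sequence is well defined (which follows from $\alpha^k\ge 1>0$) and that $\tilde u^k+S^k$ is bounded below (immediate from the nonnegativity of $u^k$ and $\tilde\nu^k$); the rest is bookkeeping.
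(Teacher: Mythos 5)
Your proposal is correct and follows essentially the same route as the paper: for (ii) the paper likewise drops $-v^k$, renormalizes by $\prod_{j\le k}(1+\mu_j)^{-1}$ (equivalent to your unrolling), telescopes, and bounds the product by $e^{1+\bar\mu}$ via $\log(1+x)\le x$. For (i) the paper simply cites Lemma~2 of Polyak's book, whereas you supply the standard proof of that cited result (renormalization plus monotone convergence of $\tilde u^k+S^k$); your argument is a correct, self-contained version of exactly that lemma.
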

\begin{proof}
\noindent{\rm(i)} See Lemma 2 of~\cite{polyak1987}.

\noindent{\rm(ii)} By $v^k\geq0$, we obtain $u^{k+1}\leq(1+\mu_k)u^k+\nu_k$. Let $\zeta^k=u^k\prod_{j=0}^{k-1}\left(1+\mu^j\right)^{-1}$ with $\zeta^0=u^0$. Then, multiplying $\prod_{j=0}^{k}\left(1+\mu^j\right)^{-1}$ (note that $\prod_{j=0}^{k}\left(1+\mu^j\right)^{-1}<1$) on both sides of the above inequality yields $\zeta^{k+1}\leq\zeta^k+\nu^k\prod_{j=0}^{k}\left(1+\mu^j\right)^{-1}\leq\zeta^k+\nu^k\leq\zeta^0+\sum_{j=0}^k\nu^j\leq\zeta^0+\bar{\nu}=u^0+\bar\nu$, which further implies
\be\label{eq:bound_sequence}
\begin{aligned}
u^{k+1}\leq\left(u^0+\bar{\nu}\right)\prod_{j=0}^{k}\left(1+\mu^j\right).
\end{aligned}
\ee
Next, we shall upper bound the term $\prod_{j=0}^{k}\left(1+\mu^j\right)$, because $\prod_{j=0}^{k}\left(1+\mu^j\right)=e^{\log\prod_{j=0}^{k}\left(1+\mu^j\right)}=e^{\sum_{j=0}^{k}\log\left(1+\mu^j\right)}\leq e^{1+\sum_{j=0}^{k}\mu^j} \leq e^{1+\bar\mu}$. Invoking this bound further on  \eqref{eq:bound_sequence} gives $u^{k+1}\leq\left(u^0+\bar\nu\right)e^{1+\bar\mu}>0$, $\forall k\geq 0$.
\end{proof}
The following lemma provides an $O(1/k)$ convergence rate of a positive sequence.
\begin{lemma}[Convergence rate on sequence]\label{lemma:Polyak1} Let $\{u^k\}$ be a sequence of positive numbers. If
\be\label{eq:lemma_recur}
u^{k+1}-\left(1-\frac{2}{k}\right)u^k\leq\frac{\gamma}{{k}^{2}},\quad{\mbox{for}}\quad\gamma>0,
\ee
then $u^{k+1}\leq\frac{\gamma}{k}$.
\end{lemma}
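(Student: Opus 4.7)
The plan is a straightforward induction on $k$, exploiting the fact that for $k\geq 2$ the coefficient $1-2/k$ is nonnegative, so the inductive hypothesis $u^k\leq \gamma/(k-1)$ can be substituted directly into the recursion \eqref{eq:lemma_recur}.

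First I would handle the base case $k=1$. The recursion gives $u^2 \leq (1-2)u^1 + \gamma = -u^1 + \gamma$, and since $u^1 \geq 0$ this yields $u^2 \leq \gamma = \gamma/1$, as required.

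For the inductive step, assume $u^k \leq \gamma/(k-1)$ for some $k\geq 2$. Since $1-2/k \geq 0$ when $k\geq 2$, the recursion \eqref{eq:lemma_recur} together with the inductive hypothesis gives
\[
u^{k+1} \leq \left(1-\frac{2}{k}\right)\frac{\gamma}{k-1} + \frac{\gamma}{k^2} = \gamma\,\frac{k-2}{k(k-1)} + \frac{\gamma}{k^2}.
\]
The goal is to show this is bounded by $\gamma/k$. Clearing the common denominator $k^2(k-1)$ reduces the inequality to $k(k-2) + (k-1) \leq k(k-1)$, i.e.\ $k^2-k-1 \leq k^2-k$, which is obvious. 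This closes the induction.

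The only real subtlety is the sign of $1-2/k$: at $k=1$ this coefficient is negative, so one cannot simply plug the induction hypothesis into the recursion there, which is why the case $k=1$ must be handled separately (using nonnegativity of $u^1$ to discard the $-u^1$ term). Beyond that, the argument is entirely routine algebra, so I do not anticipate any serious obstacle.
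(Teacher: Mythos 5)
Your proof is correct. The paper proves the lemma differently in form but identically in substance: it multiplies \eqref{eq:lemma_recur} by $k^2$ to get $k^2u^{k+1}-(k^2-2k)u^k\leq\gamma$, uses $k^2-2k\leq(k-1)^2$ (together with $u^k>0$) to obtain the telescoping inequality $k^2u^{k+1}-(k-1)^2u^k\leq\gamma$, and then sums from $1$ to $k$; the $u^1$ term drops out because its coefficient $(1-1)^2$ vanishes. Note that your key algebraic step $k(k-2)+(k-1)\leq k(k-1)$ is exactly the inequality $k^2-2k\leq(k-1)^2$ in disguise, so the two arguments rest on the same fact. What the telescoping packaging buys is that no base case or explicit induction hypothesis is needed (the $k=1$ anomaly is absorbed by the zero coefficient), and it directly yields the cumulative bound $k^2u^{k+1}\leq k\gamma$; what your induction buys is transparency about where positivity of $u^1$ and the sign of $1-2/k$ enter. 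Either write-up is acceptable.
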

\begin{proof}
Multiplying $k^2$ on both sides of~\eqref{eq:lemma_recur}, we obtain that $k^2u^{k+1}-(k^2-2k)u^k\leq\gamma$. Since $k^2-2k=(k-1)^2-1\leq(k-1)^2$, we have that
\be\label{eq:lemma_rate2}
k^2u^{k+1}-(k-1)^2u^k \leq\gamma .
\ee
Summing~\eqref{eq:lemma_rate2} over iterations $1,...,k$, leads to the desired result.
\end{proof}
Finally, we note the following useful lemma.
\begin{lemma}[Convergence on mapping norm, Lemma 3.1 of~\cite{zhao2022randomized}]\label{lemma:rcs} Consider the sequences $\{y^k\}_{k\in\mathbb{N}}$ in $\R^d$ and $\{\mu_k\}_{k\in\mathbb{N}}$ in $\R_+$. Let mapping $\Theta:\R^d\rightarrow \R^m$ be  $L_{\Theta}$-Lipschitz continuous over $\{y^k\}_{k\in\mathbb{N}}$.  Suppose further:

\noindent{\rm(i)} $\exists M\in\R_+ \text{ such that } \|y^k-y^{k+1}\|\leq M\mu_k, \quad \forall \ k \in\mathbb{N}$;

\noindent{\rm(ii)} $\sum_{k\in\mathbb{N}}\mu_k=\infty$;

\noindent{\rm(iii)} $\exists\bar{\Theta}\in\R^m \text{ such that } \sum_{k\in\mathbb{N}}\mu_k\|\Theta(y^k)-\bar{\Theta}\|^p<\infty\quad\mbox{for some \ $p>0$}$.

Then, we have $\lim_{k\rightarrow\infty}\|\Theta(y^k)-\bar{\Theta}\|=0$.
\end{lemma}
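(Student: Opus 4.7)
I would argue by contradiction: assume $\|\Theta(y^k) - \bar\Theta\| \not\to 0$, so there is $\epsilon>0$ and a subsequence with $\|\Theta(y^{k_j}) - \bar\Theta\| \geq \epsilon$. The first auxiliary step is to show $\liminf_{k\to\infty} \|\Theta(y^k) - \bar\Theta\| = 0$. Otherwise $\|\Theta(y^k) - \bar\Theta\|^p$ would be uniformly bounded below by some $c^p>0$ for all $k$ past some index, and then condition (ii) would force $\sum_k \mu_k \|\Theta(y^k) - \bar\Theta\|^p \geq c^p \sum_k \mu_k = \infty$, contradicting (iii). So we are in the oscillatory regime.

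Fix thresholds $\epsilon_1 = \epsilon/2$ and $\epsilon_2 = \epsilon$. There are infinitely many ``big'' indices $k^*$ with $\|\Theta(y^{k^*}) - \bar\Theta\| \geq \epsilon_2$. For any such $k^*$, since $\liminf = 0$, the smallest index $k^{**} > k^*$ with $\|\Theta(y^{k^{**}}) - \bar\Theta\| < \epsilon_1$ is finite, and by minimality of $k^{**}$ we have $\|\Theta(y^k) - \bar\Theta\| \geq \epsilon_1$ for every $k \in [k^*, k^{**}-1]$. Combining $L_\Theta$-Lipschitz continuity of $\Theta$ with assumption (i) along the chain $y^{k^*}, y^{k^*+1}, \ldots, y^{k^{**}}$ gives
\[
\epsilon_2 - \epsilon_1 \;\leq\; \|\Theta(y^{k^*}) - \Theta(y^{k^{**}})\| \;\leq\; L_\Theta \sum_{k=k^*}^{k^{**}-1} \|y^{k+1}-y^k\| \;\leq\; L_\Theta M \sum_{k=k^*}^{k^{**}-1} \mu_k,
\]
so $\sum_{k=k^*}^{k^{**}-1} \mu_k \geq \epsilon/(2L_\Theta M)$. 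Using the lower bound $\|\Theta(y^k) - \bar\Theta\|^p \geq \epsilon_1^p$ on this range yields
\[
\sum_{k=k^*}^{k^{**}-1} \mu_k \,\|\Theta(y^k) - \bar\Theta\|^p \;\geq\; \epsilon_1^p \cdot \frac{\epsilon}{2L_\Theta M} \;=:\; c_0 \;>\;0.
\]

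To close the contradiction I would extract an infinite sequence of such excursions whose index ranges $[k_j^*, k_j^{**}-1]$ are pairwise disjoint: pick $k_1^*$ to be any big index, define $k_1^{**}$ as above, then let $k_{j+1}^*$ be any big index exceeding $k_j^{**}-1$ (possible because there are infinitely many big indices). Summing the previous display over $j$ gives $\sum_k \mu_k \|\Theta(y^k) - \bar\Theta\|^p \geq \sum_j c_0 = \infty$, contradicting (iii), and therefore $\|\Theta(y^k) - \bar\Theta\| \to 0$.

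The main obstacle, and the reason a naive ``sum of indices where the norm is large'' argument can fail, is that the boundary contributions $\mu_{k^*}$ or $\mu_{k^{**}-1}$ of an excursion interval cannot in general be controlled individually. The descent-construction above circumvents this by bounding the \emph{whole} $\mu$-weight of the descent via one Lipschitz step, and by carefully choosing disjoint excursions so that the uniform contribution $c_0$ per excursion can be summed without double-counting.
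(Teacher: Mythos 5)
Your proof is correct: the liminf-zero step, the excursion construction between thresholds $\epsilon$ and $\epsilon/2$ with the Lipschitz/telescoping bound on $\sum\mu_k$ over each excursion, and the extraction of disjoint excursions together give a complete contradiction with hypothesis (iii). The paper itself gives no proof of this lemma (it only cites Lemma~3.1 of the external reference), and your argument is essentially the standard one used there, so there is nothing to flag.
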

\begin{lemma}[Supermartingale convergence theorem or the Robbins-Siegmund theorem; cf. \cite{RS}]\label{thm:RS}
	Let $\{\Lambda^k\}_{k\in\mathbb{N}}$, $\{\mu^k\}_{k\in\mathbb{N}}$, $\{\nu^k\}_{k\in\mathbb{N}}$, and $\{\eta^k\}_{k\in\mathbb{N}}$ be four positive sequences of real-valued random variables adapted to the filtration $\{\xi_{k}\}_{k\in\mathbb{N}}$. Suppose that
 $\mathbb{E}_{\xi_{k}} \left[\Lambda^{k+1} \right] \leq(1+\mu^k)\Lambda^k+\nu^k-\eta^k,\,\forall \ k\in\mathbb{N}$, where $\sum_{k\in\mathbb{N}}\mu^k<\infty \mbox{ and } \sum_{k\in\mathbb{N}}\nu^k<\infty \mbox{ almost surely}$.
Then, the sequence $\{\Lambda^k\}_{k\in\mathbb{N}}$ almost surely converges to a finite random variable\footnote{A random variable $X$ is finite if $\mathbb{P}\left(\{\omega\in\Omega: X(\omega)=\infty\}\right)=0$.} $\bar \Lambda$, and $\sum_{k\in\mathbb{N}}\eta^k<\infty$ holds almost surely.
\end{lemma}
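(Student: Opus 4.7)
The plan is to apply the classical Robbins--Siegmund reduction: absorb the multiplicative perturbation $(1+\mu^k)$ into a discount sequence so that the recursion becomes a genuine nonnegative supermartingale, and then invoke Doob's convergence theorem. First I would define the predictable positive sequence $a_k:=\prod_{j=0}^{k-1}(1+\mu^j)^{-1}$ with $a_0:=1$. Since $\log(1+\mu^j)\leq\mu^j$ and $\sum_{j}\mu^j<\infty$ almost surely, the infinite product converges to a limit $a_\infty\in(0,1]$ that is almost surely strictly positive; in particular $0<a_\infty\leq a_k\leq 1$ for all $k$, a.s. This strict positivity of $a_\infty$ is exactly what will let me later translate statements about the rescaled process back to statements about $\Lambda^k$ and $\eta^k$.

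Next, multiplying the hypothesis by $a_{k+1}$ and using the identity $a_{k+1}(1+\mu^k)=a_k$, I would introduce
\[
Y^k\;:=\;a_k\Lambda^k+\sum_{j=0}^{k-1}a_{j+1}\eta^j-\sum_{j=0}^{k-1}a_{j+1}\nu^j,
\]
and verify by a one-line computation that $\mathbb{E}_{\xi_k}[Y^{k+1}]\leq Y^k$, so $\{Y^k\}$ is a supermartingale w.r.t.\ the given filtration. Because $a_{j+1}\leq 1$, one has the a.s. lower bound $Y^k\geq -\sum_{j=0}^{\infty}\nu^j$, which is almost surely finite by hypothesis. Doob's supermartingale convergence theorem then yields $Y^k\to Y^\infty$ a.s.\ with $Y^\infty$ a.s.\ finite.

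To extract the two conclusions of the lemma, I would argue as follows. The partial sum $\sum_{j=0}^{k-1}a_{j+1}\nu^j$ is monotone nondecreasing and dominated by $\sum_j\nu^j<\infty$, hence converges a.s.\ to a finite limit. Therefore $a_k\Lambda^k+\sum_{j=0}^{k-1}a_{j+1}\eta^j$ converges a.s.\ to a finite limit. Since this sum is the sum of a nonnegative quantity $a_k\Lambda^k$ and a nondecreasing partial sum in $\eta^j$, the nondecreasing partial sum is itself a.s.\ bounded above; thus $\sum_{j=0}^{\infty}a_{j+1}\eta^j<\infty$ a.s. Subtracting shows $a_k\Lambda^k$ has an a.s.\ finite limit, and then $\Lambda^k=(a_k\Lambda^k)/a_k$ converges a.s.\ to $\bar\Lambda:=\lim(a_k\Lambda^k)/a_\infty$, which is a.s.\ finite because $a_\infty>0$. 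The same lower bound $a_{j+1}\geq a_\infty$ a.s.\ gives $\sum_{j}\eta^j\leq a_\infty^{-1}\sum_{j}a_{j+1}\eta^j<\infty$ a.s.

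The main technical subtlety, and the step I expect to need most care, is the application of Doob's theorem to $Y^k$ in Step~3: the lower bound on $Y^k$ is random rather than deterministic, and one must also ensure integrability of $Y^k$ (which reduces to the standard implicit integrability convention for the processes $\Lambda^k,\mu^k,\nu^k,\eta^k$). I would handle this via localization, stopping at $\tau_N:=\inf\{k:\sum_{j<k}\nu^j>N\}$ to obtain a supermartingale bounded below by $-N$, applying the classical convergence theorem on each stopped process, and then letting $N\to\infty$ using the a.s.\ event $\{\sum_j\nu^j<\infty\}$. The remainder of the argument is bookkeeping that exploits the positivity of $a_\infty$.
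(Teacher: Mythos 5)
The paper does not actually prove this lemma: it is imported verbatim from Robbins and Siegmund \cite{RS}, so there is no in-paper argument to compare against. Your proposal is a correct, self-contained rendition of the standard proof of that theorem: discount by $a_k=\prod_{j=0}^{k-1}(1+\mu^j)^{-1}$ (which is measurable with respect to the $\sigma$-field generated by $\xi_0,\dots,\xi_{k-1}$, so $a_{k+1}$ can legitimately be pulled out of $\mathbb{E}_{\xi_k}$), form the compensated process $Y^k$ and check it is a supermartingale, localize to obtain a deterministic lower bound so that Doob's convergence theorem applies, and then use $a_\infty>0$ a.s.\ (which follows from $\sum_j\mu^j<\infty$ a.s.\ and $\log(1+\mu^j)\le\mu^j$) to transfer the convergence of $a_k\Lambda^k$ and the summability of $a_{j+1}\eta^j$ back to $\Lambda^k$ and $\eta^j$. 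It is worth noting that the paper's proof of the deterministic analogue, \Cref{lemma:Polyak_extend}, uses exactly the same discounting device ($\zeta^k=u^k\prod_{j=0}^{k-1}(1+\mu^j)^{-1}$ there plays the role of your $a_k\Lambda^k$), so your argument is the natural stochastic counterpart of what the paper already does. The only quibble is an off-by-one in the stopping time: with $\tau_N=\inf\{k:\sum_{j<k}\nu^j>N\}$ the stopped process is bounded below only by $-(N+\nu^{\tau_N-1})$, not by $-N$; defining $\tau_N=\inf\{k:\sum_{j\le k}\nu^j>N\}$ removes this, and the issue is cosmetic since you correctly identified localization as the step requiring care.
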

\section{Proofs in~\Cref{sec:wcvx_wcvx}} \label{appendix-c}
\subsection{Proof of~\Cref{lemma:basic_recur_wcvx}}
By the $\eta$-weakly convexity of $r$, the objective function of the subproblem of Prox-SubGrad is ($\frac{1}{\alpha_k}-\eta$)-strongly convex, and $x^{k+1}$ is the unique minimizer. Therefore,
\be \label{eq:VI_prox wcvx_1}
2\alpha_k\left[\langle g(x^k),x-x^{k+1}\rangle+r(x)-r(x^{k+1})\right]+\|x-x^k\|^2-\|x^k-x^{k+1}\|^2\geq(1-\alpha_k\eta)\|x-x^{k+1}\|^2,\quad\forall x\in\setX.
\ee
By $\alpha_k\eta\|x-x^{k+1}\|^2\leq2\alpha_k\eta\|x^k-x^{k+1}\|^2+2\alpha_k\eta\|x-x^{k}\|^2$,
we have that
\begin{align}\label{eq:recur_prox_wcvx_1}
\|x-x^{k+1}\|^2-\|x-x^{k}\|^2\leq & -(1-2\alpha_k\eta)\|x^k-x^{k+1}\|^2+2\alpha_k\langle g(x^k),x^k-x^{k+1}\rangle\nonumber\\
&+2\alpha_k\left[\langle g(x^k),x-x^{k}\rangle+r(x)-r(x^{k+1})+\eta\|x-x^k\|^2\right].
\end{align}
\noindent {\rm(i)} By the $\rho$-weakly convexity of $f$ and inequality~\eqref{eq:recur_prox_wcvx_1}, one computes that
\[
\begin{aligned}
\|x-x^{k+1}\|^2-\|x-x^{k}\|^2\leq &-(1-2\alpha_k\eta)\|x^k-x^{k+1}\|^2-2\alpha_k\left[\varphi(x^{k})-\varphi(x)-\frac{\rho+2\eta}{2}\|x-x^k\|^2\right]\\
&+2\alpha_k\left[r(x^{k})-r(x^{k+1})+\langle g(x^k),x^k-x^{k+1}\rangle\right] \\
\leq &-\frac{(1-\lambda(\rho+2\eta))\alpha_k}{\lambda}\|x-x^k\|^2-(1-2\alpha_k\eta)\|x^k-x^{k+1}\|^2  \nonumber \\
&
     -2 \alpha_k\left[\varphi(x^{k})-\varphi(x)-\frac{1}{2\lambda}\|x-x^k\|^2\right] \nonumber \\
&+2\alpha_k\left[\|g(x^k)+h(x^k)\|\cdot\|x^k-x^{k+1}\|+\frac{\eta}{2}\|x^k-x^{k+1}\|^2\right]\\
&\qquad\qquad\qquad\qquad\qquad\qquad\mbox{(since $r$ is $\eta$-weakly convex)}\\
\leq & -\frac{(1-\lambda(\rho+2\eta))\alpha_k}{\lambda}\|x-x^k\|^2-(1-2\alpha_k\eta)\|x^k-x^{k+1}\|^2  \nonumber \\
& -2 \alpha_k\left[\varphi(x^{k})-\varphi(x)-\frac{1}{2\lambda}\|x-x^k\|^2\right] \\
&+2\alpha_k^2\|g(x^k)+h(x^k)\|^2+\left(\frac{1}{2}+\alpha_k\eta\right)\|x^k-x^{k+1}\|^2\\
\leq & -\frac{(1-\lambda(\rho+2\eta))\alpha_k}{\lambda}\|x-x^k\|^2-2 \alpha_k\left[\varphi(x^{k})-\varphi(x)-\frac{1}{2\lambda}\|x-x^k\|^2\right]\\
&+2\alpha_k^2\|g(x^k)+h(x^k)\|^2.\qquad\mbox{(since $\alpha_k<\frac{1}{6\eta}$)}  
\end{aligned}
\]

\noindent {\rm (ii)} By inequality~\eqref{eq:recur_prox_wcvx_1}, one computes that
\[
\begin{aligned}
\|x-x^{k+1}\|^2-\|x-x^{k}\|^2\leq& -(1-2\alpha_k\eta)\|x^k-x^{k+1}\|^2-2\alpha_k\left[\varphi(x^{k+1})-\varphi(x)-\frac{\rho+2\eta}{2}\|x-x^{k}\|^2\right] \nonumber \\
& +2\alpha_k\left[\langle g(x^k),x^k-x^{k+1}\rangle+f(x^{k+1})-f(x^{k})\right] \nonumber \\
\leq &-\frac{(1-\lambda(\rho+2\eta))\alpha_k}{\lambda}\|x-x^k\|^2-(1-2\alpha_k\eta)\|x^k-x^{k+1}\|^2 \nonumber \\
& -2\alpha_k\left[\varphi(x^{k+1})-\varphi(x)-\frac{1}{2\lambda}\|x-x^k\|^2\right] \nonumber \\
& + 2\alpha_k\left[\|g(x^k)-g(x^{k+1})\|\cdot\|x^k-x^{k+1}\|+\frac{\rho}{2}\|x^{k}-x^{k+1}\|^2\right]\nonumber\\
 \leq &-\frac{(1-\lambda(\rho+2\eta))\alpha_k}{\lambda}\|x-x^k\|^2 -2\alpha_k\left[\varphi(x^{k+1})-\varphi(x)-\frac{1}{2\lambda}\|x-x^k\|^2\right] \nonumber \\
& + 2\alpha_k^2\|g(x^k)-g(x^{k+1})\|^2-\left(\frac{1}{2}-\alpha_k(\rho+2\eta)\right)\|x^k-x^{k+1}\|^2\nonumber\\
 \leq & -\frac{(1-\lambda(\rho+2\eta))\alpha_k}{\lambda}\|x-x^k\|^2-2\alpha_k\left[\varphi(x^{k+1})-\varphi(x)-\frac{1}{2\lambda}\|x-x^k\|^2\right] \nonumber \\
& + 2\alpha_k^2\|g(x^k)-g(x^{k+1})\|^2-\frac{\alpha_k}{\lambda}\|x^k-x^{k+1}\|^2.\qquad\mbox{(since $\alpha_k<\frac{1}{2(\rho+2\eta+\frac{1}{\lambda})}$)}
\end{aligned}
\]
\noindent {\rm(iii)} Setting $x=x^k$ in~\eqref{eq:recur_prox_wcvx_1}, we obtain
\[
\begin{aligned}
(2-2\alpha_k\eta)\|x^k-x^{k+1}\|^2\leq & 2\alpha_k\left[\langle g(x^k),x^k-x^{k+1}\rangle+r(x^k)-r(x^{k+1})\right]\\
\leq & 2\alpha_k\left[\langle g(x^k)+h(x^k),x^k-x^{k+1}\rangle+\frac{\eta}{2}\|x^k-x^{k+1}\|^2\right]\\
\leq & 2\alpha_k\|g(x^k)+h(x^k)\|\cdot\|x^k-x^{k+1}\|+\alpha_k\eta\|x^k-x^{k+1}\|^2.
\end{aligned}
\]
It follows that $(2-3\alpha_k\eta)\|x^k-x^{k+1}\|\leq\alpha_k\|g(x^k)+h(x^k)\|$. By $\alpha_k\leq\frac{1}{2\eta}$, the desired result follows.
\subsection{Proof of~\Cref{lemma:basic_recur_wcvx_hx}}

{\rm(i)} Setting $x=\hat{x}^k=\prox_{\lambda,\varphi}(x^k)$ in (a) of~\Cref{lemma:basic_recur_wcvx}, we have
\begin{align*}
\|\hat{x}^k-x^{k+1}\|^2\leq&\left(1-\frac{(1-\lambda(\rho+2\eta))\alpha_k}{\lambda}\right)\|\hat{x}^k-x^k\|^2-2\alpha_k\left[\varphi(x^k)-\varphi(\hat{x}^k)-\frac{1}{2\lambda}\|\hat{x}^k-x^k\|^2\right] \\
& +2\alpha_k^2\|g(x^k)+h(x^k)\|^2.
\end{align*}
Using~\Cref{lemma:phi_lambda_1} with $z=x=x^k$, we obtain the desired result.

\noindent{\rm(ii)} Setting $x=\hat{x}^k=\prox_{\lambda,\varphi}(x^k)$ in (b) of~\Cref{lemma:basic_recur_wcvx}, we have
\begin{align}\label{eq:recur_hx_1}
\|\hat{x}^k-x^{k+1}\|^2\leq&\left(1-\frac{(1-\lambda(\rho+2\eta))\alpha_k}{\lambda}\right)\|\hat{x}^k-x^k\|^2-2\alpha_k\left[\varphi(x^{k+1})-\varphi(\hat{x}^k)-\frac{1}{2\lambda}\|\hat{x}^k-x^k\|^2\right]\nonumber \\
&-\frac{\alpha_k}{\lambda}\|x^k-x^{k+1}\|^2+2\alpha_k^2\|g(x^k)-g(x^{k+1})\|^2.
\end{align}
Using~\Cref{lemma:phi_lambda_1} with $z=x^{k+1}$ and $x=x^k$, we have $\varphi(x^{k+1})-\varphi(\hat{x}^k)\geq\frac{1}{2\lambda}\|\hat{x}^k-x^k\|^2-\frac{1}{2\lambda}\|x^k-x^{k+1}\|^2$, then~\eqref{eq:recur_hx_1} yields that
\begin{align*}
\|\hat{x}^k-x^{k+1}\|^2\leq&\left(1-\frac{(1-\lambda(\rho+2\eta))\alpha_k}{\lambda}\right)\|\hat{x}^k-x^k\|^2+2\alpha_k^2\|g(x^k)-g(x^{k+1})\|^2.
\end{align*}
\noindent{\rm(iii)} Setting $x=x_k^*=\arg\min_{x\in\setX^*}\|x-x^k\|$ in (a) of~\Cref{lemma:basic_recur_wcvx}, we have
\begin{align*}
\dist^2(x^{k+1},\setX^*)\leq&\|x_k^*-x^{k+1}\|^2\\
\leq&\left(1-\frac{(1-\lambda(\rho+2\eta))\alpha_k}{\lambda}\right)\dist^2(x^k,\setX^*)-2\alpha_k\left[\varphi(x^k)-\varphi^*-\frac{1}{2\lambda}\|x^*-x^k\|^2\right] \\
& +2\alpha_k^2\|g(x^k)+h(x^k)\|^2\\
=&\left(1+(\rho+2\eta)\alpha_k\right)\dist^2(x^k,\setX^*)-2\alpha_k\left[\varphi(x^k)-\varphi^*\right]+2\alpha_k^2\|g(x^k)+h(x^k)\|^2.
\end{align*}
}
%
%
%








\end{document}